\newtheorem{theorem}{Theorem}[section]
\newtheorem{lemma}[theorem]{Lemma}
\newtheorem{proposition}[theorem]{Proposition}
\newtheorem{corollary}[theorem]{Corollary}
\newtheorem{question}[theorem]{Question}
\theoremstyle{definition}
\newtheorem{remark}[theorem]{Remark}
\newtheorem{definition}[theorem]{Definition}
\newcommand{\C}{\mathbb{C}}
\newcommand{\Z}{\mathbb{Z}}
\newcommand{\N}{\mathbb{N}}
\newcommand{\link}{\rightsquigarrow}
\begin{document}

\title{Simple modules and their essential extensions for skew polynomial rings}
\author{Ken Brown, Paula A.A.B. Carvalho and Jerzy Matczuk}
\date{\today}

\maketitle

\begin{abstract} Let $R$ be a commutative Noetherian ring and $\alpha$ an automorphism of $R$. This paper addresses the question: when does the skew polynomial ring $S = R[\theta; \alpha]$ satisfy the property $(\diamond)$, that for every simple $S$-module $V$ the injective hull $E_S(V)$ of $V$ has all its finitely generated submodules Artinian. The question is largely reduced to the special case where $S$ is primitive, for which necessary and sufficient conditions are found, which however do not between them cover all possibilities. Nevertheless a complete characterisation is found when $R$ is an affine algebra over a field $k$ and $\alpha$ is a $k$-algebra automorphism - in this case $(\diamond)$ holds if and only if all simple $S$-modules are finite dimensional over $k$. This leads to a discussion, involving close study of some families of examples, of when this latter condition holds for affine $k$-algebras $S = R[\theta;\alpha]$. The paper ends with a number of open questions.

\end{abstract}

\noindent {\it Keywords:} Injective module, Noetherian ring, simple module, skew polynomial ring.
\medskip

\noindent{2010 {\it Mathematics Subject Classification.} 16D50, 16P40, 16S35}


\section{Introduction}

\subsection*{}\label{intro1} A Noetherian ring $S$ whose simple modules have the property that their finitely generated essential extensions are Artinian is said to satisfy property $(\diamond)$. For commutative rings $S$ the validity of $(\diamond)$ is due to Matlis, proved in his famous 1958 paper \cite{Matlis}; a brief survey of work on this topic in the years since then is given below, in $\S$\ref{history}. This paper focusses on $(\diamond)$ for the skew polynomial rings $S = R[\theta; \alpha]$, where $R$ is a commutative Noetherian ring and $\alpha$ is an automorphism of $R$, with the indeterminate $\theta$ satisfying the relations $\theta r = \alpha (r)\theta$ for all $r \in R$. When such a skew polynomial ring $S$ satisfies $(\diamond)$ turns out to be a surprisingly subtle question, which we do not completely settle here, and which leads naturally to other fundamental representation-theoretic questions concerning these rings.

We are led to pay particular attention to two separate but overlapping cases - first, when $S$ is a primitive ring; and second, when $R$ is an affine algebra over a field. We outline our main results for these two settings in $\S$\ref{primitintro} and $\S$\ref{affintro} respectively. Here and throughout,  given a ring $R$ and an $R$-module $V$, $E_R(V)$ will denote the $R$-injective hull of $V$.

\subsection{When $S$ is primitive}\label{primitintro} Using relatively standard methods involving the second layer condition we show that, \emph{for every commutative Noetherian ring $R$, $S = R[\theta; \alpha]$ satisfies $(\diamond)$ if and only if  $E_S(V)$ is locally Artinian  for every simple $S$-module $V$ whose annihilator $Q$ is induced from $R$.} Here, ``induced from $R$'' means that  $Q = (Q \cap R)S$. This reduction is achieved in Corollary \ref{primitive}, thereby focussing attention on the case where $S = R[\theta; \alpha]$ is primitive, since $S/(Q\cap R)S \cong \overline{R}[\theta; \overline{\alpha}]$, where $\overline{R} = R/Q\cap R$ and $\overline{\alpha}$ denotes the automorphism of $\overline{R}$ induced by $\alpha$.

Making heavy use of the characterisation of primitive skew polynomial rings \cite{JurekLeroy}, we prove:

\begin{theorem}\label{first}  Let $R$ be a commutative Noetherian ring, let $\alpha$ be an automorphism of $R$, and let $S = R[\theta;\alpha]$. Suppose that $S$ is primitive.
\begin{enumerate}
\item If $R$ has Krull dimension 0 then $S$ satisfies $(\diamond)$.
\item Suppose that $R$ contains an uncountable field. Suppose also that either $R$ has Krull dimension at least 2, or $\mathrm{Spec}(R)$ is uncountable. Then $S$ does not satisfy $(\diamond)$.
\end{enumerate}
\end{theorem}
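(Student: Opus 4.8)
The plan is to prove both parts by exploiting the structure theory of primitive skew polynomial rings (the Jordan–Leroy characterisation referenced as \cite{JurekLeroy}), which should tell us that primitivity of $S = R[\theta;\alpha]$ forces $\alpha$ to act without nontrivial finite orbits on $\mathrm{Spec}(R)$ — more precisely, that there is a suitable faithfulness/transitivity condition on the $\alpha$-action on maximal ideals. I expect to first recall this characterisation explicitly and extract the precise combinatorial consequence I need: primitivity should guarantee a maximal ideal $\germ{m}$ of $R$ whose $\alpha$-orbit $\{\alpha^n(\germ{m}) : n \in \Z\}$ consists of distinct ideals, and more generally that the $\alpha$-action on the relevant part of $\mathrm{Spec}(R)$ has infinite orbits.

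For part (a), when $\dim_{\mathrm{Krull}} R = 0$, the ring $R$ is Artinian, hence a finite product of local Artinian rings. The strategy here is to show directly that $S$ satisfies $(\diamond)$. Since $R$ is Artinian Noetherian, every simple $S$-module $V$ has annihilator lying over a maximal ideal of $R$, and by the reduction in Corollary \ref{primitive} it suffices to control $E_S(V)$ for $V$ with annihilator induced from $R$. The key point I would exploit is that over an Artinian base the factor rings $R/\germ{m}$ are fields (or finite-length modules), so the relevant localised/completed skew polynomial rings behave like a skew-Laurent situation over a field, where simple modules are finite-dimensional and $(\diamond)$ can be verified by a direct length/finiteness argument. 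Concretely, I would aim to show every finitely generated essential extension of $V$ inside $E_S(V)$ has finite length as an $S$-module by bounding its length in terms of the $R$-module structure, using that $R$ has finite length.

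For part (b), the goal is to \emph{construct} a simple $S$-module $V$ together with a finitely generated, non-Artinian submodule of $E_S(V)$, thereby violating $(\diamond)$. The uncountability hypotheses — an uncountable field $k \subseteq R$, together with either $\dim_{\mathrm{Krull}} R \geq 2$ or uncountable $\mathrm{Spec}(R)$ — are exactly the ingredients needed to produce uncountably many distinct maximal ideals in a single $\alpha$-orbit, or a two-dimensional ``plane'' of primes on which $\alpha$ acts with infinite orbits. The plan is to use the uncountable field to guarantee uncountably many points (the standard phenomenon that an affine-type space over an uncountable field has uncountably many closed points, and that a countably generated ideal cannot cut out everything), and then to build an essential extension $E_S(V) \supseteq V$ whose socle series, or whose $\theta$-torsion filtration, fails to stabilise: one exhibits a finitely generated submodule generated by a single element $x$ whose descending chain of annihilators (or ascending chain of $\theta^n x$) is genuinely infinite and non-Artinian because the orbit it indexes is infinite. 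The main obstacle I anticipate is in part (b): translating the uncountability hypothesis into the existence of the right non-Artinian finitely generated submodule is delicate, because one must ensure simultaneously that (i) the constructed module $V$ is genuinely simple, (ii) the ambient essential extension really is inside the injective hull $E_S(V)$, and (iii) the chosen finitely generated submodule fails the Artinian (DCC) condition. The cardinality counting — showing an uncountable family of primes survives inside a single orbit or a single localisation, so that the relevant module cannot be Artinian — is where the uncountable-field hypothesis does its essential work, and making that counting rigorous while keeping the module-theoretic construction clean is the crux of the argument.
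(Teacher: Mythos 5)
There is a genuine gap, and it is precisely at the crux of part (b), which your proposal states as a goal but never actually proves. Your extraction from the Leroy--Matczuk characterisation is also incorrect: Theorem \ref{JurekLeroy} says that $S$ is primitive if and only if $\alpha$ has infinite order and $R$ is \emph{$\alpha$-special}, i.e.\ there is an element $a$ with all partial norms $N_n^{\alpha}(a)=a\alpha(a)\cdots\alpha^{n-1}(a)$ non-zero and eventually entering every non-zero $\alpha$-stable ideal. This is not a condition about $\alpha$ acting with infinite orbits on $\mathrm{Spec}(R)$ or on maximal ideals; indeed primitivity is compatible with the existence of infinitely many finite $\langle\alpha\rangle$-orbits of maximal ideals (this is exactly the situation for square automorphisms of $\C[x,y]$ in $\S$7.2, cf.\ Proposition \ref{orbitcurve}), so the ``combinatorial consequence'' on which your plan rests is false. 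The paper's actual route for (b) is: reduce to $R$ a domain via Lemmas \ref{npower} and \ref{power}; localise at the countable $\alpha$-invariant Ore set $\mathcal{A}$ generated by the $\alpha$-special element $a$, so that $R\mathcal{A}^{-1}$ becomes \emph{$\alpha$-simple}; show $R\mathcal{A}^{-1}$ is not a field; then apply the key construction, Proposition \ref{simplenotdiamond1}: over an $\alpha$-simple Noetherian domain that is not a field, $S/(1-\theta^n)S$ has finite length but its injective hull is not locally Artinian, because $V=\rho S/\rho(1-\theta)S\cong S/(1-\theta)S$ is simple (Lemma \ref{artinian}) and sits as an \emph{essential} submodule of the non-Artinian module $S/\rho(1-\theta)S$ (Lemma \ref{notartin} plus the two sublemmas, which are the real work); finally Lemma \ref{kenlem1} transports the failure of $(\diamond)$ back through the localisation (Proposition \ref{old2.1}). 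None of these steps --- in particular the essentiality argument --- has a counterpart in your proposal; what you describe instead (a non-stabilising socle series or $\theta$-torsion filtration indexed by an infinite orbit) is not how the non-Artinian essential extension arises, and you yourself flag that you cannot carry out points (i)--(iii).

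You have also misidentified where the uncountable-field hypothesis does its work. It is not used to produce ``uncountably many points in a single orbit'' or for any cardinality count inside a module; it enters only through countable prime avoidance \cite{SharpVamos}: since $\mathcal{A}$ is countable, Lemma \ref{kdim1} shows that if $R\mathcal{A}^{-1}$ were the quotient field of $R$ then $R$ would have Krull dimension $1$ and countable spectrum, which is exactly what the hypotheses of (b) exclude. As for part (a), the paper's proof is short and quite different from your sketch: $S$ is prime (being primitive) and has Krull dimension $1$ by \cite[Theorem 15.19]{GoodearlWarfield}, and a prime Noetherian ring of Krull dimension $1$ satisfies $(\diamond)$ by a standard torsion argument, cf.\ \cite[Proposition 5.5]{Musson80}. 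Your plan of bounding lengths ``in terms of the $R$-module structure'' is incomplete at the decisive point: a finitely generated essential extension of a simple module is not a priori of finite length over $R$ (the ring $S$ itself is not), and it is the torsion/Krull-dimension argument, not a length bound inherited from the Artinian base, that closes this.
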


The above is Theorem \ref{primitivecase}; its proof occupies $\S\S$4, 5. Clearly, the above necessary and sufficient conditions don't exhaust all possibilities, and we leave the closure of the gap between (a) and (b) as one of a number of open questions raised by the paper.

\subsection{When $R$ is affine}\label{affintro} Theorem \ref{first} is however sufficient to settle the case where $R$ is a finitely generated algebra over an uncountable field $k$, with $\alpha$ a $k$-algebra automorphism:

\begin{theorem}\label{affine1} Let $k$ be an uncountable field and $R$  an affine $k$-algebra, and let $\alpha$ be a $k$-algebra automorphism of $R$. Let $S = R[\theta ; \alpha]$. Then the following are equivalent:
\begin{enumerate}
 \item $S$ satisfies $(\diamond)$;
 \item all simple $S$-modules are finite dimensional $k$-vector spaces.
\end{enumerate}
\end{theorem}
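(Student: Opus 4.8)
The plan is to move between $(\diamond)$ and finite-dimensionality of simples using two elementary facts, valid because $S=R[\theta;\alpha]$ is an affine $k$-algebra. First, for a simple $S$-module $V$ with $Q=\mathrm{ann}_S(V)$, finite-dimensionality of $V$ over $k$ is equivalent to that of $S/Q$: if $\dim_k V<\infty$ then $S/Q$ embeds in $\mathrm{End}_k(V)$, while conversely any simple module over a finite-dimensional algebra is finite dimensional. Second, $(\diamond)$ descends to quotients: for an ideal $J$ of $S$ and a simple $S/J$-module $W$ one has $E_{S/J}(W)=\{x\in E_S(W):xJ=0\}$, an $S$-submodule of $E_S(W)$, so if every finitely generated submodule of $E_S(W)$ is Artinian the same holds for $E_{S/J}(W)$.

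For (b) $\Rightarrow$ (a) I would show that (b) empties the hypothesis of Corollary \ref{primitive}. Suppose all simple $S$-modules are finite dimensional and let $V$ be simple with $Q=\mathrm{ann}_S(V)$ induced from $R$, say $Q=PS$ where $P=Q\cap R$ is (necessarily, as $R$ is Noetherian) $\alpha$-invariant. Then $S/Q\cong\overline R[\theta;\overline\alpha]$ with $\overline R=R/P\neq 0$, which is infinite dimensional over $k$ since $1,\theta,\theta^2,\dots$ are $\overline R$-independent; but finite-dimensionality of $V$ forces $\dim_k S/Q<\infty$, a contradiction. Hence no primitive ideal of $S$ is induced from $R$, so the condition in Corollary \ref{primitive} holds vacuously and $S$ satisfies $(\diamond)$.

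For (a) $\Rightarrow$ (b) I would prove the contrapositive. Given a simple $V$ with $\dim_k V=\infty$, set $Q=\mathrm{ann}_S(V)$, $P=Q\cap R$, and let $I=\bigcap_{n\in\mathbb{Z}}\alpha^n(P)$ be the $\alpha$-core of $P$, the largest $\alpha$-invariant ideal contained in $P$; it is $\alpha$-prime and satisfies $IS\subseteq Q$. Thus $\overline R:=R/I$ is an $\alpha$-prime affine $k$-algebra carrying the prime $\overline P=P/I$ of zero $\alpha$-core, and $V$ is an infinite-dimensional simple module over the prime ring $\overline S:=S/IS\cong\overline R[\theta;\overline\alpha]$. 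The decisive step is to deduce from infinite-dimensionality of $V$ that the $\overline\alpha$-orbit of $\overline P$ is infinite: were it finite, some power $\theta^N$ would be central and $\overline S$ would be a finite module over the central affine subalgebra $k[\theta^N]$, forcing all its simple modules---$V$ among them---to be finite dimensional. An infinite orbit yields infinitely many distinct primes $\alpha^n(\overline P)$ in $\overline R$, so $\overline R$ is not Artinian and $\dim\overline R\geq 1$; and by the characterisation of primitive skew polynomial rings in \cite{JurekLeroy} the $\alpha$-prime ring $\overline R$ with an infinite-orbit prime of zero $\alpha$-core makes $\overline S$ primitive.

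It then remains to verify the hypotheses of Theorem \ref{first}(b) for $\overline S$. Since $\overline R\supseteq k$ and $k$ is uncountable, if $\dim\overline R\geq 2$ the theorem applies at once; if $\dim\overline R=1$, Noether normalisation presents $\overline R$ as a finite module over some $k[t]$, whose $|k|$ maximal ideals $(t-a)$ pull back to uncountably many primes, so $\mathrm{Spec}(\overline R)$ is uncountable and Theorem \ref{first}(b) applies again. In either case $\overline S$ fails $(\diamond)$, and since $(\diamond)$ passes to quotients, so does $S$, completing the contrapositive. I expect the main obstacle to be precisely the decisive step of the previous paragraph: extracting from the single datum ``$\dim_k V=\infty$'' both the primitivity of $\overline S$ and the positivity of $\dim\overline R$, which is where the orbit analysis and the citation to \cite{JurekLeroy} do the real work, Theorem \ref{first}(b) being then essentially a black box.
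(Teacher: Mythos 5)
Your direction (b)$\Rightarrow$(a) is correct and amounts to the paper's own machinery: the vacuity argument via Corollary \ref{primitive} (no annihilator of a simple module can be induced from $R$ when all simples are finite dimensional, since $S/(Q\cap R)S$ is free of infinite rank over $R/(Q\cap R)\neq 0$) is a sound repackaging of the second layer condition results of Section 3. The problems are all in (a)$\Rightarrow$(b), and they sit exactly at what you call the decisive step. Under the hypothesis $\dim_k V=\infty$ you necessarily have $\theta\notin Q$ (otherwise $V$ is a simple module over $S/\theta S\cong R$, hence finite dimensional by the Nullstellensatz), and then $P=Q\cap R$ is $\alpha$-prime, in particular $\alpha$-stable, by Lemma \ref{stable}(c). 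Consequently your core is $I=P$ and $\overline P=0$, so the $\overline\alpha$-orbit of $\overline P$ is a single point, hence finite. The same collapse occurs if you instead take $\overline P$ to be a prime minimal over $Q\cap R$: $\alpha$ permutes the finitely many minimal primes of the $\alpha$-stable ideal $Q\cap R$, so that orbit is finite as well. Thus the ``infinite orbit'' branch of your dichotomy is never realized and Theorem \ref{first}(b) is never reached; worse, the ``finite orbit'' branch rests on a false implication. Finiteness of the orbit of one prime does not make any power $\theta^N$ central: centrality of $\theta^N$ in $\overline R[\theta;\overline\alpha]$ requires $\overline\alpha^N=\mathrm{id}_{\overline R}$, far stronger than $\overline\alpha^N(\overline P)=\overline P$. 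If your implication were correct, the discussion just given would show, without ever invoking $(\diamond)$, that every simple $S$-module is finite dimensional --- contradicted, for instance, by the faithful simple module of Proposition \ref{simplemodule} over a quantum plane with non-root-of-unity parameter. The paper's corresponding step is genuinely different: only when $R/(Q\cap R)$ has Krull dimension $0$ --- so that it is a finite product of finite field extensions of $k$, by the Nullstellensatz --- does a power of $\overline\alpha$ act as the identity, allowing the PI/Kaplansky argument.

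Two further gaps. The primitivity criterion you attribute to \cite{JurekLeroy} --- that an $\alpha$-prime affine ring with an infinite-orbit prime of zero $\alpha$-core gives a primitive skew polynomial ring --- is not their theorem: Theorem \ref{JurekLeroy} requires $\overline R$ to be $\overline\alpha$-special, a strictly stronger and subtler condition. Were your criterion true, it would answer the paper's open Question \ref{qn4} affirmatively, since for a square automorphism of $\C[x,y]$ all but countably many maximal ideals have infinite orbit and zero $\alpha$-core, yet primitivity there is equivalent (Proposition \ref{orbitcurve}) to a delicate geometric condition on the finite orbits and is unknown. No criterion is needed anyway: in the only case where $(\diamond)$ must be used, namely $Q=(Q\cap R)S$, the ring $S/Q$ is primitive by the very definition of a primitive ideal. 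Finally, you omit the case $\theta\notin Q$ with $(Q\cap R)S\subsetneq Q$, which the paper disposes of unconditionally via Irving's theorem (yielding a PI) plus Kaplansky's theorem; your $\overline S=S/IS$ is then a ring over which $V$ is not faithful, and nothing in your argument applies to it. Your endgame (Noether normalisation giving an uncountable spectrum, Theorem \ref{first}(b), and passage of $(\diamond)$ to quotients) does agree with the paper, but the path leading to it is broken.
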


This is Theorem \ref{affine}. The direction $(b)\Rightarrow(a)$ follows from known considerations based on the second layer condition, and doesn't require the cardinality hypothesis on $k$. There is thus a further obvious question: is $(a) \Rightarrow (b)$ true when $k$ is countable? Although we don't know the answer to this question in general, we can at least answer it in the most obvious special case, namely when $R$ is a polynomial algebra and $\alpha$ is a linear automorphism:

\begin{theorem}\label{affine2} Let $k$ be a field, $t$ a positive integer, $V$ a vector space over $k$ with basis $\{x_1, \ldots , x_t\}$ and $\alpha \in \mathrm{GL}(t,k)$ an automorphism of $V$. Let $R = k[x_1, \ldots , x_t]$, so that $\alpha$ induces a $k$-algebra automorphism of $R$, also denoted by $\alpha$. Then $S := R[\theta; \alpha]$ satisfies $(\diamond)$ if and only if $|\alpha|< \infty$.
\end{theorem}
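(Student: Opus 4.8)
The plan is to prove the two implications separately; the ``only if'' direction is the substantial one.

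Suppose first that $m := |\alpha| < \infty$. Then $\alpha^m = \mathrm{id}_R$, so $\theta^m r = \alpha^m(r)\theta^m = r\theta^m$ for all $r \in R$; thus $\theta^m$ is central in $S$, as is the fixed ring $R^{\langle\alpha\rangle}$. Since $\langle\alpha\rangle$ is a finite group acting on the affine algebra $R$, Noether's theorem (valid in all characteristics) gives that $R^{\langle\alpha\rangle}$ is affine and that $R$ is a finitely generated $R^{\langle\alpha\rangle}$-module. Consequently $S$ is a finitely generated module over its central affine commutative subalgebra $C := R^{\langle\alpha\rangle}[\theta^m]$, being spanned by the finitely many $r\theta^i$ with $r$ ranging over module generators of $R$ over $R^{\langle\alpha\rangle}$ and $0 \le i < m$. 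Hence $S$ is an affine PI $k$-algebra that is module-finite over its centre, so by the Nullstellensatz for such algebras every simple $S$-module $L$ satisfies $\dim_k L < \infty$ (indeed $S/\mathrm{ann}_S(L)$ is then simple Artinian and finite-dimensional over $k$). Condition (b) of Theorem \ref{affine1} therefore holds, and since the implication (b)$\Rightarrow$(a) of that theorem requires no cardinality hypothesis on $k$, we conclude that $S$ satisfies $(\diamond)$.

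For the converse I argue contrapositively, assuming $|\alpha| = \infty$ and aiming to show that $S$ fails $(\diamond)$. First, $k$ must be infinite, since otherwise $\mathrm{GL}(t,k)$ is a finite group and $|\alpha| < \infty$. Next I claim there is a $k$-rational point, i.e.\ a maximal ideal $\mathfrak n$ of $R$ with $R/\mathfrak n = k$, whose $\alpha$-orbit $\{\alpha^i(\mathfrak n) : i \in \Z\}$ is infinite: if every $k$-point were $\alpha$-periodic then, applying this to a basis of the point space $V^* = \mathrm{Hom}_k(V,k)$ and letting $N$ be the lcm of the resulting periods, we would get $\alpha^N = \mathrm{id}$, contradicting $|\alpha| = \infty$. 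Fixing such an $\mathfrak n$ and writing $\chi_i \colon R \to k$ for the character at $\alpha^i(\mathfrak n)$, the distinct characters $\chi_i$ let me build an infinite-dimensional simple $S$-module $L = \bigoplus_{i \in \Z} k v_i$, with $R$ acting on $v_i$ through $\chi_i$ and $\theta$ acting as the bijective shift $v_i \mapsto v_{i+1}$; simplicity holds because the $R$-weight spaces are one-dimensional with distinct weights and $\theta^{\pm 1}$ permutes them transitively. When $k$ is uncountable this already finishes the argument, since $L$ then violates condition (b) of Theorem \ref{affine1}, whence $S$ fails (a).

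The real content is thus the case of a countable (necessarily infinite) field $k$, where Theorem \ref{affine1} is unavailable and I must exhibit directly a finitely generated, non-Artinian essential extension of a simple module inside some $E_S(L)$. This is the main obstacle: producing a cyclic $S$-module with simple essential socle that nonetheless contains an infinite strictly descending chain of submodules. The naive one-sided orbit modules do not suffice --- the ``upward'' shift module $S/S\mathfrak n$ has zero socle, while its ``downward'' analogue is Artinian but not finitely generated --- so the construction must genuinely combine finite generation in one direction with an infinite descending chain, which amounts to understanding $E_S(L)$ itself, exactly as in the analysis underlying Theorem \ref{first} in \S\S4--5. My plan for this step is to reduce, using the linear structure of $\alpha$, to a small ``model'' ring. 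Choosing an $\alpha$-invariant subspace $W \subseteq V$ for which the induced automorphism $\bar\alpha$ of the quotient $\bar R := \mathrm{Sym}(V/W)$ of $R = \mathrm{Sym}(V)$ (by the $\alpha$-stable ideal generated by $W$) still has infinite order but minimal rank, the surjection $S \twoheadrightarrow \bar S = \bar R[\theta;\bar\alpha]$ reduces matters to one of two normal forms: a rank-one quantum plane $\bar R = k[y]$, $\bar\alpha(y) = qy$ with $q \in k^{\times}$ not a root of unity; or (in the unipotent case, which forces $\mathrm{char}\,k = 0$) a shift algebra $k[y]$, $\bar\alpha(y) = y+1$, isomorphic to the enveloping algebra of the non-abelian two-dimensional Lie algebra. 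For each model I would construct the required finitely generated non-Artinian essential extension explicitly and independently of $|k|$; failure of $(\diamond)$ then lifts back to $S$, because an $\bar S$-module has the same submodule lattice whether viewed over $\bar S$ or over $S$, so such an extension over $\bar S$ is one over $S$ as well. The delicate point is to ensure that an $\alpha$-invariant quotient of sufficiently small rank exists over $k$ itself rather than only after extending scalars; here the infinite-orbit construction above together with a case analysis of the rational canonical form of $\alpha$ are the tools, and handling eigenvalues irrational over $k$ without losing control of finite-dimensionality is where I expect the argument to demand the most care.
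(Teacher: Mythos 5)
Your ``if'' direction is correct (it differs from the paper, which simply notes that $R[\theta^n]=R[\theta^n;\alpha^n]$ is commutative Noetherian and transfers $(\diamond)$ across the finite normalizing extension via Proposition \ref{reduction}, whereas you go through Noether's theorem, PI theory and Theorem \ref{affine1}(b)$\Rightarrow$(a); both work). The ``only if'' direction, however, has genuine gaps. The first is your infinite-dimensional ``simple'' module $L=\bigoplus_{i\in\Z}kv_i$: it is \emph{not} simple over $S$, because $\theta^{-1}\notin S$. A submodule need only be closed under the upward shift, so each $L_i:=\bigoplus_{j\geq i}kv_j$ is a proper nonzero $S$-submodule and $L$ is uniserial with an infinite descending chain and zero socle. (Your module is simple over the skew \emph{Laurent} ring $R[\theta^{\pm1};\alpha]$, not over $S$.) Worse, it cannot be repaired into a counterexample to $(\diamond)$ directly: the cyclic submodule $v_0S=L_0$ satisfies $\bigcap_i L_i=0$ with each $L_0/L_i$ finite dimensional, so it is a subdirect product of Artinian modules and witnesses nothing. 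Hence even your uncountable-$k$ shortcut is not established; producing a genuine infinite-dimensional simple module here requires a different construction (compare Proposition \ref{simplemodule}, where the maximal right ideal is of the form $(1-a\theta)S$).

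The second gap is in the reduction that carries ``the real content''. (i) Your normal forms are to be reached as $\bar R=\mathrm{Sym}(V/W)$ for an $\alpha$-invariant subspace $W$, but the automorphism induced on such a quotient is always \emph{linear}, so the shift model $\bar\alpha(y)=y+1$ can never arise this way; the unipotent case forces a non-linear specialisation, and the paper uses the $\alpha$-stable ideal $\sum_{\ell\leq t-2}x_\ell\widehat R+(x_{t-1}-1)\widehat R$, which is not generated by a subspace of $V$. (ii) When $\alpha$ has no eigenvalue in $k$ (e.g.\ the companion matrix of $x^2-3x+1$ over $\Q$), no rank-one invariant quotient over $k$ exists at all; you flag this as the expected hard point but leave it unresolved, whereas it has a clean solution you missed: pass to a finite extension $L/k$ containing the eigenvalues, note $L\otimes_kS$ is a finite normalizing extension of $S$, and apply Proposition \ref{reduction}(a) to transfer $(\diamond)$ upward --- this is exactly how the paper starts, and it makes the whole argument independent of $|k|$, eliminating your case split on countability. (iii) Finally, the failure of $(\diamond)$ for the two models (quantum plane with $q$ not a root of unity; $y\mapsto y+1$ in characteristic $0$) is asserted but not proved; the paper invokes \cite[Theorem 3.1]{CarvalhoMusson} and \cite{Musson82} respectively (or, internally, Propositions \ref{old2.1} and \ref{simplenotdiamond1}). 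So your outline has the right shape --- quotient to small models and pull back the failure of $(\diamond)$ via Proposition \ref{reduction}(c) --- but the steps that would make it a proof are precisely the ones missing or incorrect.
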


The finiteness of $|\alpha|$ featuring in the above result is known to be equivalent, for an \emph{arbitrary} semiprime commutative coefficient ring $R$, to $S = R[\theta; \alpha]$ satisfying a polynomial identity (PI), \cite{PascaudValette}. This might lead us to suspect that a \emph{third} equivalent condition, ``\emph{$S$ satisfies a PI}'', could be added to Theorem \ref{affine1} also. This is not the case, however, as we show by Example \ref{groupring}.

The existence of this and other examples provokes the obvious question: for which $k$-affine Noetherian skew polynomial algebras $S = R[\theta; \alpha]$ are all the simple $S$-modules finite dimensional $k$-spaces? Our contribution to this question is chiefly to demonstrate its complexity, by means of detailed analysis of some families of examples in $\S$\ref{examples}. Most notably, we undertake a detailed study of the algebras $S_{\C, t, \alpha} = \C[x_1, \ldots , x_t][\theta; \alpha]$ when $t =1$ or $2$ and $\alpha$ is an \emph{arbitrary} $\C$-algebra automorphism. But even for $t = 2$ our analysis is incomplete, and leads to delicate issues having connections to dynamical systems and to algebraic geometry.

\subsection{Historical background}\label{history}
For commutative Noetherian rings $(\diamond)$ is an immediate consequence of the Artin-Rees property, formally recorded as part of Matlis's seminal 1958 paper \cite{Matlis} on injective modules over such rings. In 1959  Philip Hall proved $(\diamond)$ for group rings $RG$ of finitely generated nilpotent groups $G$, provided $R$ is either $\Z$ or a locally finite field. In 1974 this result was extended, independently by Jategaonkar \cite{Jategaonkar74a} and Roseblade \cite{Roseblade}, to polycyclic-by-finite groups $G$, building on earlier celebrated work of Roseblade \cite{Roseblade1} on the finite dimensionality of the simple $RG$-modules for these group rings. Hall and Roseblade were motivated by applications to the structure of finitely generated soluble groups.

Motivated by applications to Jacobson's conjecture, Jategaonkar proved in 1974 \cite{Jategaonkar74} that $(\diamond)$ is satisfied by fully bounded Noetherian rings, thus incorporating Noetherian rings satisfying a polynomial identity (PI) and so generalising the commutative case. Musson \cite{Musson80} gave the first examples of Noetherian rings for which $(\diamond)$ fails, by showing that the group algebra $kG$ of a polycyclic-by-finite group $G$ over a field $k$ which is \emph{not} locally finite satisfies $(\diamond)$ only if $G$ is abelian-by-finite, that is only if $kG$ satisfies a PI. In so doing he thus delineated the limits of the earlier results of Hall and Roseblade.

More recent work has discussed $(\diamond)$ for differential operator rings \cite{CarvalhoHatipogluLomp}, \cite{SV}, down-up algebras \cite{CarvalhoLompPusat} and quantised Weyl algebras \cite{CarvalhoMusson}. \cite{Musson12} gives a brief survey of results on property $(\diamond)$ up to 2010.

\subsection{Layout}\label{lay} Preliminary observations and notation regarding property $(\diamond)$ and skew polynomial rings are in $\S 2$. $\S 3$ contains a summary of the necessary background on the second layer condition, leading up to Corollary \ref{primitive}, which essentially allows us to focus on the case where $S = R[\theta; \alpha]$ is a primitive ring. The analysis of primitive skew polynomial rings is contained in $\S\S4$ and $5$: the key result of $\S4 $ is the construction of a faithful simple $S$-module whose injective hull is not locally Artinian when $R$ is an $\alpha$-simple domain which is not a field. This is Proposition \ref{simplenotdiamond1}, which then allows us to deduce Theorem \ref{first} (= Theorem \ref{primitivecase}). In $\S 6$ Theorem \ref{first} is applied in the setting where $R$ is an affine algebra over the field $k$ and $\alpha$ is an algebra automorphism, to deduce Theorems \ref{affine1} and \ref{affine2} (= Theorems \ref{affine} and \ref{poly}). $\S 7$ is devoted to a careful analysis of the simple modules and the prime spectra of a number of examples, and families of examples, of skew polynomial algebras over a commutative affine Noetherian domain $R$. These examples may well have interest beyond the immediate question at hand, namely the validity of $(\diamond)$. Finally, in $\S 8$ we gather together and briefly discuss the open questions which have arisen in the course of this work.

All rings considered are associative with identity and all modules are unitary and are right modules unless stated otherwise.

\section{Preliminaries}

\subsection{Formulation and preservation of $(\diamond)$}\label{dimondform} Recall that a module $M$ is a subdirect product of a family of modules $\{F_{\lambda}\}$ if there exists an embedding $\iota:M\rightarrow \prod_{\lambda\in\Lambda} F_{\lambda}$ into the product of the modules $F_{\lambda}$ such that, for each projection $\pi_{\mu}:\prod_{\lambda\in\Lambda} F_{\lambda}\rightarrow F_{\mu}$, the composition $\pi_{\mu}\iota$ is surjective. The following lemma is due to Hatipo\"glu and Lomp, \cite{ChristianCan}.

\begin{lemma}\label{Christian}$\textrm{\cite[Lemma 2.1]{ChristianCan}}$  Given a ring $R$, the following conditions are equivalent:
\begin{enumerate}
 \item $R$ satisfies $(\diamond)$;
 \item every right $R$-module is a subdirect product of locally Artinian modules;
 \item every finitely generated right $R$-module is a subdirect product of Artinian modules.
\end{enumerate}
\end{lemma}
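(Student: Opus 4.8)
The plan is to prove the cycle $(a)\Rightarrow(b)\Rightarrow(c)\Rightarrow(a)$, exploiting the fact that property $(\diamond)$ is phrased in terms of injective hulls of simple modules, so the natural bridge to subdirect products is the observation that every module embeds in an injective, and every injective over a Noetherian ring decomposes into indecomposable (hence uniform) injectives, each of which is the injective hull of a simple module precisely when it has nonzero socle.

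First I would prove $(a)\Rightarrow(b)$. Let $M$ be an arbitrary right $R$-module and embed $M\hookrightarrow E(M)$ into its injective hull. Over a (right) Noetherian ring, $E(M)$ is a direct sum of indecomposable injectives, and more usefully every injective is built from injective hulls of cyclic modules; the key point is to produce, for each nonzero $m\in M$, an $R$-homomorphism from $M$ to a locally Artinian module that does not kill $m$. To do this I would choose a submodule $N$ of $M$ maximal with respect to $m\notin N$; then $M/N$ has an essential simple socle $V$, so $M/N$ embeds in $E_R(V)$, which by $(\diamond)$ is locally Artinian. The family of all such quotient maps $M\to E_R(V)$, indexed over all nonzero $m$, separates points of $M$ and hence realises $M$ as a subdirect product of the locally Artinian modules $E_R(V)$, giving (b).

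Next, $(b)\Rightarrow(c)$ is the routine direction: if $M$ is finitely generated and embeds subdirectly in a product of locally Artinian modules $\prod_\lambda F_\lambda$ with each projection $\pi_\mu\iota$ surjective, then each image $\pi_\mu\iota(M)$ is a finitely generated submodule of the locally Artinian module $F_\mu$, hence Artinian, and $M$ is thereby a subdirect product of Artinian modules. Finally, for $(c)\Rightarrow(a)$, I would take a simple module $V$ and a finitely generated submodule $U$ of its injective hull $E_R(V)$; applying (c) to $U$ expresses $U$ as a subdirect product of Artinian modules $A_\lambda$. Since $U$ is an essential extension of $V$ (as a submodule of $E_R(V)$ containing the essential socle $V$), its socle is simple, so the subdirect embedding forces $U$ itself to be an essential extension of a simple module inside an Artinian module; using that $U$ has finite uniform dimension and essential simple socle, I would conclude $U$ is Artinian, which is exactly $(\diamond)$.

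The main obstacle I anticipate is the step inside $(a)\Rightarrow(b)$, namely passing cleanly from ``$E_R(V)$ is locally Artinian for each simple $V$'' to ``$M$ is a subdirect product of locally Artinian modules'' for \emph{arbitrary} $M$: one must verify that the chosen separating quotients $M/N$ genuinely embed into injective hulls of \emph{simple} modules (this needs the maximality of $N$ to guarantee an essential simple socle) and that local Artinianness is inherited by the submodule $M/N\subseteq E_R(V)$, which is immediate since any finitely generated submodule of $M/N$ is a finitely generated submodule of the locally Artinian $E_R(V)$. The delicate bookkeeping is ensuring the separating family is indexed so that the projections are surjective, which is what upgrades an ordinary embedding into a subdirect one.
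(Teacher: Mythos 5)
Your proposal is correct. Note that the paper does not actually prove this lemma: it is quoted verbatim from Hatipo\u{g}lu and Lomp \cite[Lemma 2.1]{ChristianCan}, so there is no in-paper argument to compare against; but your proof is exactly the standard (and, in essence, the cited) one: realise an arbitrary module as a subdirect product of its monolithic quotients $M/N_m$ (with $N_m$ maximal avoiding $m$), each of which has simple essential socle and so embeds in the injective hull of a simple module, and conversely use that a finitely generated essential extension $U$ of a simple $V$ is monolithic, so in any subdirect representation $U\hookrightarrow\prod_\lambda A_\lambda$ the kernels of the projections cannot all be nonzero (else each would contain $V$, contradicting that they intersect in $0$), whence $U\cong A_{\lambda_0}$ is Artinian. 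Two small points of hygiene, both of which you essentially flag yourself: the subdirect factors in $(a)\Rightarrow(b)$ must be taken to be the images $M/N_m$ rather than the hulls $E_R(V_m)$, since the composite maps onto the hulls need not be surjective; and in $(c)\Rightarrow(a)$ the appeal to finite uniform dimension is superfluous --- the only fact needed is that $V$ lies inside every nonzero submodule of $U$.
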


 Recall that a ring extension $T\subseteq S$ is called a \emph{finite normalizing extension} if there exists a finite set $\{s_1,\ldots, s_n\}$ of elements of $S$ such that $S=\sum_{i=0}^{n} s_iT$, with $s_iT=Ts_i$, for $i = 1, \ldots , n$. Part (a) of the next result is due to Hatipo\"glu and Lomp, \cite[Proposition 2.2]{ChristianCan}, while (b) is adapted from the work of Hirano  \cite[Theorems 1.8, 1.11]{Hirano00}.

\begin{proposition}\label{reduction}
Let $S$ be a finite normalizing extension of a ring $T$.
\begin{enumerate}
 \item If $T$ satisfies $(\diamond)$, then so does $S$.
 \item Assume also that $T$ is Noetherian and a direct summand of $S$ as a left $T$-module. If $S$ satisfies $(\diamond)$, then so does $T$.
 \item If $I$ is an ideal of $S$ and $S$ satisfies $(\diamond)$, then so does $S/I$.
\end{enumerate}
\end{proposition}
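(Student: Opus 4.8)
Throughout I would argue via the characterisation of $(\diamond)$ by subdirect products in Lemma \ref{Christian}, and record two elementary consequences of $S=\sum_{i=0}^n s_iT$ being a finite normalizing extension: first, $S$ is finitely generated both as a left and as a right $T$-module (by $s_0=1,s_1,\dots,s_n$), so every finitely generated $S$-module is finitely generated over $T$, and $S$ is Noetherian whenever $T$ is; second, any module that is locally Artinian over $T$ is locally Artinian over $S$, since a finitely generated $S$-submodule is finitely generated hence Artinian over $T$, while an $S$-submodule chain is a $T$-submodule chain. I would also note once and for all that, by passing to images, a module embeds in a product of locally Artinian modules if and only if it is a subdirect product of locally Artinian modules, so that Lemma \ref{Christian}(b) may be used in ``embedding'' form.

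Part (c) is quickest. Let $M$ be a finitely generated $S/I$-module; it is finitely generated over $S$, so by $(\diamond)$ for $S$ and Lemma \ref{Christian}(c) there is an $S$-embedding $\iota\colon M\hookrightarrow\prod_\lambda A_\lambda$ with each $\pi_\lambda\iota$ surjective and each $A_\lambda$ Artinian over $S$. Each $A_\lambda=\pi_\lambda\iota(M)$ is a quotient of $M$, hence annihilated by $I$ and an $S/I$-module, on which the $S$- and $S/I$-submodule lattices coincide; thus $A_\lambda$ is Artinian over $S/I$ and the whole diagram is $S/I$-linear. So $M$ is a subdirect product of Artinian $S/I$-modules, and $S/I$ satisfies $(\diamond)$ by the converse direction of Lemma \ref{Christian}(c).

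For part (a) I would use coinduction, following \cite{ChristianCan}. For a right $T$-module $N$ set $N^{S}=\mathrm{Hom}_T(S,N)$, a right $S$-module via $(f\cdot a)(x)=f(ax)$. For any right $S$-module $M$ the unit $\eta_M\colon M\to (M_T)^{S}$, $\eta_M(m)(x)=mx$, is $S$-linear and injective since $\eta_M(m)(1)=m$. As $\mathrm{Hom}_T(S,-)$ preserves products and monomorphisms, an embedding of $M_T$ into a product of locally Artinian $T$-modules $N_\lambda$ (available by $(\diamond)$ for $T$ and Lemma \ref{Christian}(b)) yields, after composing with $\eta_M$, an $S$-embedding of $M$ into $\prod_\lambda N_\lambda^{S}$. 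The plan concludes once each $N_\lambda^{S}$ is known to be locally Artinian over $S$, for which, by the second remark above, it suffices to check it is locally Artinian over $T$. Evaluation at $s_0,\dots,s_n$ gives an additive embedding $N^{S}\hookrightarrow N^{\,n+1}$, and for a finitely generated $T$-submodule $D$ generated by $g_1,\dots,g_p$ all coordinates lie in the finitely generated, hence Artinian, $T$-submodule $N_0=\sum_{l,i} g_l(s_i)T$ of $N$. This transfer is the \emph{main obstacle} in (a): the evaluation map is only additive, and one must use the relations $s_iT=Ts_i$ to put a compatible $T$-module structure on the target (twisting the action in the $i$-th coordinate by the surjection of $T$ determined by $ts_i\in s_iT$) making evaluation $T$-linear; surjectivity of these twists preserves the Artinian property.

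For part (b) I would run this in reverse, inducing up. Let $M$ be finitely generated over the Noetherian ring $T$, so $M$ is Noetherian. The map $j\colon M\to (M\otimes_T S)_T$, $m\mapsto m\otimes 1$, is right $T$-linear and injective: writing ${}_TS={}_TT\oplus X$ and letting $\rho\colon S\to T$ be the left $T$-linear projection, the additive map $\mathrm{id}_M\otimes\rho$ retracts $j$ — and it is here that the left-summand hypothesis is used, precisely to stop induction from collapsing $M$. Since $M\otimes_T S$ is finitely generated over $S$, by $(\diamond)$ for $S$ and Lemma \ref{Christian}(b) it embeds in a product of locally Artinian $S$-modules $P_\lambda$; restricting and composing with $j$ embeds $M$ in $\prod_\lambda (P_\lambda)_T$. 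The \emph{crux} of (b) is then to show each $(P_\lambda)_T$ is locally Artinian over $T$, i.e. the comparison that a module locally Artinian over $S$ is locally Artinian over $T$. I would reduce this to ``a simple $S$-module has finite length over $T$'': for a finitely generated $T$-submodule $L\le P_\lambda$, the module $LS$ is Noetherian (as $S$ is Noetherian) and Artinian over $S$, hence of finite length over $S$, hence of finite length over $T$, so its submodule $L$ is Artinian over $T$. I would invoke the comparison of composition lengths for finite normalizing extensions of \cite{Hirano00} for this step. With it, $M$ embeds in a product of locally Artinian $T$-modules, and $T$ satisfies $(\diamond)$ by Lemma \ref{Christian}(b).
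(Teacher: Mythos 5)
Your proposal is correct, and on part (b) — the only part the paper actually proves in detail — it follows essentially the paper's route: embed $M$ into $M\otimes_T S$ via the left $T$-module splitting of $S$, apply $(\diamond)$ for $S$ to the finitely generated $S$-module $M\otimes_T S$, and transfer the Artinian property from $S$ back to $T$ by a finite-normalizing-extension comparison. The paper does this by quoting \cite[Theorem 4]{FormanekJategaonkar} (an $S$-module is Artinian over $S$ if and only if it is Artinian over $T$), where you route through finite length and cite \cite{Hirano00}; that is a citation difference, not a mathematical one. The genuine divergences are in (a) and (c). For (c) the paper is a one-liner — $E_{S/I}(V)\subseteq E_S(V)$ for any $S/I$-module $V$, so finitely generated essential extensions over $S/I$ are Artinian already by $(\diamond)$ for $S$ — and your subdirect-product argument, while correct, is heavier than necessary. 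For (a) the paper gives no proof at all, citing \cite[Proposition 2.2]{ChristianCan}; your coinduction argument via $N^S=\mathrm{Hom}_T(S,N)$ is a genuine addition and supplies the content the paper outsources.

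One caveat on your (a): the ``surjection of $T$ determined by $ts_i\in s_iT$'' is not a well-defined map $T\to T$ when $s_i$ has a nonzero right annihilator in $T$; the element $t'$ with $ts_i=s_it'$ is determined only modulo $A_i:=\{u\in T:s_iu=0\}$, which is a two-sided ideal of $T$ (two-sidedness again uses $s_iT=Ts_i$). So the twist is really a surjective ring homomorphism $T\to T/A_i$, and the twisted action is well defined only on elements of $N$ annihilated by $A_i$. Fortunately every value $g(s_i)$ is such an element, since $g(s_i)A_i=g(s_iA_i)=0$, so the finitely generated submodules $\sum_l g_l(s_i)T$ you actually work with do carry the twisted structure; with this repair, evaluation becomes $T$-linear into a finite product of Artinian modules, and your key observation — that surjectivity of the twist makes twisted submodules coincide with ordinary $T$-submodules, so the Artinian property transfers — goes through. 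A final cosmetic point: when you conclude for a finitely generated module by invoking Lemma \ref{Christian}(b), the subdirect factors are finitely generated and locally Artinian, hence Artinian, so the citation should strictly be Lemma \ref{Christian}(c); nothing breaks.
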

\begin{proof}
(a) See \cite[Proposition 2.2]{ChristianCan}.

(b) Let $M$ be a finitely generated right $T$-module, so $M\otimes_TS$ is a finitely generated right $S$-module. By hypothesis and Lemma \ref{Christian} there exists a family $\{M_{\lambda}\}$ of $S$-submodules of $M\otimes_T S$ such that each  $(M\otimes_T S)/M_{\lambda}$ is Artinian and $\bigcap_{\lambda}M_{\lambda}=0$. Since $T$ is a direct summand of $S$ as a left $T$-module, $M$ can be identified with a right $T$-submodule of $M\otimes_T S$. Fix $\lambda$. Then $M/(M\cap M_{\lambda})$ is isomorphic to a  $T$-submodule of $(M\otimes_T S)/M_{\lambda}$. By \cite[Theorem 4]{FormanekJategaonkar}, $(M\otimes_T S)/M_{\lambda}$ is Artinian as an $S$-module if and only if it is Artinian as a $T$-module. Now, note that $\bigcap_{\lambda}(M\cap M_{\lambda})=0$, so the result follows from Lemma \ref{Christian}(c).

(c) This is trivial, since $E_{S/I}(V) \subseteq E_S(V)$ for any $S/I$-module $V$.
\end{proof}

\subsection{Skew polynomial algebras of automorphism type}\label{skewpoly}

 Let $R$ be a ring and let $\alpha$ be an automorphism of $R$. The \emph{skew polynomial ring of automorphism type}, $S := R[\theta; \alpha]$, is the ring of polynomials in $\theta$ with coefficients in $R$ subject to the relation $\theta r=\alpha(r)\theta$ for all $r \in R$.

 \begin{equation*} \textit{We shall maintain the notation } S, R, \theta, \alpha \textit{ throughout the paper.}
  \end{equation*}

 Often, $R$ will in addition be commutative or Noetherian, but we will nevertheless state these hypotheses as required in our results, for emphasis. A right ideal $I$ of $R$ is said to be \emph{$\alpha$-stable} if $\alpha(I)=I$. We say that $R$ is \emph{$\alpha$-simple} if $(0)$ and $R$ are the only $\alpha$-stable ideals of $R$. An $\alpha$-stable ideal $P$ of $R$ is \emph{$\alpha$-prime} if, for all $\alpha$-stable ideals $I$ and $J$ of $R$, $IJ\subseteq P$ implies that $I\subseteq P$ or $J\subseteq P$. The ring $R$ is said to be \emph{$\alpha$-prime} if its ideal $(0)$ is $\alpha$-prime, and is called $\alpha$-\emph{simple} if its only $\alpha$-stable ideals are $(0)$ and $R$.

The following well known facts can be found in \cite[$\S$10.6]{MR}.

\begin{lemma}\label{stable}
Let $R$ be a right Noetherian ring, $\alpha$ an automorphism of $R$ and $S=R[\theta;\alpha]$.
\begin{enumerate}
 \item A right ideal $I$ of $R$ is $\alpha$-stable if and only if $\alpha(I)\subseteq I$.
 \item If $I$ is an $\alpha$-stable ideal of $R$, then $IS$ is an ideal of $S$ and the ring $S/IS$ is isomorphic to $(R/I)[\theta; {\overline{\alpha}}]$, where $\overline{\alpha}$ is the automorphism of $R/I$ induced by $\alpha$. The ideal $IS$ is prime if and only if $I$ is $\alpha$-prime.
 \item If $P$ is a prime ideal of $S$ such that $\theta\notin P$ then $P\cap R$ is an $\alpha$-prime ideal of $R$.
 \item If $P$ is an $\alpha$-prime ideal of $R$, then there exists $n\in\N$ and a minimal prime $Q$ over $P$ such that $ P=\bigcap_{i=0}^n \alpha^i(Q)$.
\end{enumerate}
\end{lemma}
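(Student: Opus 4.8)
The plan is to handle the four parts largely independently; the recurring tools are the right Noetherian hypothesis (for (a) and (d)) and the normality of $\theta$ in $S$ (for (b) and (c)). Here normality means the assignment $\sum_i r_i\theta^i \mapsto \sum_i \alpha(r_i)\theta^i$ defines an automorphism $\tilde{\alpha}$ of $S$ with $\theta s = \tilde{\alpha}(s)\theta$, so that $\theta S = S\theta$ is a two-sided ideal. For (a), one implication is the definition of $\alpha$-stability; for the converse, $\alpha(I)\subseteq I$ gives, after applying $\alpha^{-1}$ repeatedly, an ascending chain of right ideals $I \subseteq \alpha^{-1}(I) \subseteq \alpha^{-2}(I) \subseteq \cdots$, which stabilises because $R$ is right Noetherian. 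If $\alpha^{-n}(I) = \alpha^{-n-1}(I)$, then applying $\alpha^{n+1}$ yields $\alpha(I) = I$.

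For (b), $\alpha$-stability gives $\theta(IS) = \alpha(I)\theta S = I\theta S \subseteq IS$, and $IS$ is visibly a right ideal, so $IS$ is an ideal of $S$. Reduction of coefficients modulo $I$ is a surjective ring homomorphism $S \to (R/I)[\theta; \bar{\alpha}]$ (it respects the defining relation because $\bar{\alpha}(\bar{r}) = \overline{\alpha(r)}$) whose kernel is exactly $IS$, since $\alpha$-stability of $I$ forces $IS$ to be precisely the polynomials with all coefficients in $I$; this gives the isomorphism. For the primeness statement I would pass through this isomorphism: the $\alpha$-stable ideals of $R$ containing $I$ correspond to the $\bar{\alpha}$-stable ideals of $R/I$, so $I$ is $\alpha$-prime exactly when $R/I$ is $\bar{\alpha}$-prime, reducing the claim to the classical fact that $A[\theta;\beta]$ is prime if and only if $A$ is $\beta$-prime. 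The direction ``$A[\theta;\beta]$ prime $\Rightarrow A$ is $\beta$-prime'' is immediate from $(\mathfrak{a}A[\theta;\beta])(\mathfrak{b}A[\theta;\beta]) \subseteq \mathfrak{a}\mathfrak{b}\,A[\theta;\beta]$ for $\beta$-stable ideals $\mathfrak{a}, \mathfrak{b}$; the converse is the standard leading-coefficient argument, which I would cite rather than reprove.

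For (c), I first show $P\cap R$ is $\alpha$-stable. If $r \in P\cap R$ then $\alpha(r)\theta = \theta r \in P$, and normality gives $(S\alpha(r)S)(S\theta S) = S\alpha(r)\theta S \subseteq P$; since $P$ is prime with $\theta \notin P$, the factor $S\theta S$ is not contained in $P$, so $\alpha(r) \in P$. Thus $\alpha(P\cap R) \subseteq P\cap R$, and (a) upgrades this to $\alpha$-stability. For $\alpha$-primeness, suppose $IJ \subseteq P\cap R$ with $I, J$ $\alpha$-stable ideals of $R$; by (b), $IS$ and $JS$ are ideals and $(IS)(JS) \subseteq IJS \subseteq P$, so primeness of $P$ gives $IS \subseteq P$ or $JS \subseteq P$, whence $I \subseteq P\cap R$ or $J \subseteq P\cap R$.

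For (d), the first point is that an $\alpha$-prime ideal is semiprime: $\sqrt{P}$ is $\alpha$-stable and $(\sqrt{P})^k \subseteq P$ for some $k$ since $R$ is Noetherian, so iterating the $\alpha$-prime property forces $\sqrt{P} = P$. Thus $P$ is the intersection of its finitely many minimal primes $Q_0, \ldots, Q_m$, which $\alpha$ permutes (being an automorphism fixing $P$). The crux is transitivity of this permutation: were the $Q_i$ to split into two non-empty $\alpha$-invariant subsets, intersecting each subset would give $\alpha$-stable ideals $I, J$ with $IJ \subseteq I\cap J = P$ but, by incomparability of distinct minimal primes together with prime avoidance, neither contained in $P$ --- contradicting $\alpha$-primeness. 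Transitivity then makes $\{Q_0, \ldots, Q_m\} = \{Q, \alpha(Q), \ldots, \alpha^n(Q)\}$ for a single minimal prime $Q$ and suitable $n$, giving $P = \bigcap_{i=0}^n \alpha^i(Q)$. The only genuinely non-formal ingredient throughout is the classical prime criterion invoked in (b), which I would take from \cite[\S10.6]{MR}; the remaining steps are bookkeeping with normality, the ACC, and prime avoidance.
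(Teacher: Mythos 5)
Your proof is correct; there is, however, nothing in the paper to compare it against line by line, because the authors give no proof of this lemma at all --- they simply remark that these are well-known facts to be found in \cite[\S 10.6]{MR}. Your write-up supplies exactly the arguments behind that citation, and does so at the stated level of generality (right Noetherian, not necessarily commutative): the ascending-chain trick for (a), the coefficient-reduction isomorphism in (b), the normality-of-$\theta$ argument in (c), and, in (d), semiprimeness of $\alpha$-prime ideals via nilpotence of the prime radical (that step is Levitzki's theorem --- this is what ``since $R$ is Noetherian'' amounts to in the noncommutative setting) followed by transitivity of the $\alpha$-action on the finitely many minimal primes. The one ingredient you cite rather than prove, that $A$ $\beta$-prime implies $A[\theta;\beta]$ prime via the leading-coefficient argument, is precisely what the cited section of \cite{MR} contains, so your division of labour mirrors where the paper itself places the burden; what your version buys is a self-contained record of which hypothesis is used where (the ACC only in (a) and (d), normality of $\theta$ in (b) and (c)). Two small quibbles, neither affecting correctness: in (b), the identification of $IS$ with the set of polynomials having all coefficients in $I$ uses only that $I$ is a right ideal --- $\alpha$-stability is what makes $IS$ two-sided and the quotient a skew polynomial ring; and in (d), the fact you actually invoke is that a prime ideal containing a finite intersection (equivalently, product) of ideals must contain one of them, which is just primeness, not ``prime avoidance'' (the latter concerns unions of primes).
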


A convenient mechanism to construct interesting simple $S$-modules is as follows.
\begin{lemma}\label{artinian}
 Suppose that  $u$ is a central unit of a right Noetherian ring $R$ and $S=R[\theta;\alpha]$.
 \begin{enumerate}
   \item The lattice of right $\alpha$-ideals of $R$ is isomorphic to the lattice of $S$-submodules of $S/(u-\theta)S$.
  \end{enumerate}
Suppose additionally in (b) and (c) that $R$ is commutative.
\begin{enumerate}[resume]
   \item The right $S$-module $S/(u-\theta)S$ is   simple if and only if $R$  is $\alpha$-simple.
   \item  Suppose that $R$ is also a domain with no proper idempotent ideals; for example, $R$ could be a Noetherian domain. Then  $ S/(u-\theta)S$ is an Artinian   right $S$-module  if and only if  $R$ is $\alpha$-simple.
 \end{enumerate}
\end{lemma}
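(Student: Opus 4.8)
The plan is to pin down the module $M := S/(u-\theta)S$ completely, as a right $R$-module equipped with the residual action of $\theta$, and then to read off all three statements from that description. First I would show that $M \cong R$ as right $R$-modules. Since $u-\theta$ has degree one in $\theta$ with a unit as leading coefficient, the image of $R$ generates $M$: from $(u-\theta)\theta^{i-1}\in(u-\theta)S$ together with $u\theta^{i-1}=\theta^{i-1}\alpha^{-(i-1)}(u)$ one gets $\overline{\theta^i}=\overline{\theta^{i-1}}\,\alpha^{-(i-1)}(u)$, so inductively $M=\overline{1}\cdot R$. Injectivity of $R\to M$ follows from a degree count: a nonzero right multiple $(u-\theta)q$ has degree $\geq 1$ because the leading coefficient of $u-\theta$ is a unit and $\alpha$ is bijective, so $(u-\theta)S\cap R=0$. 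The one genuinely computational point is to identify how $\theta$ acts: using $r\theta=\theta\alpha^{-1}(r)$ and $\theta\alpha^{-1}(r)=u\alpha^{-1}(r)-(u-\theta)\alpha^{-1}(r)$, one finds that under $M\cong R$ the element $\theta$ acts as the operator $\sigma\colon r\mapsto u\,\alpha^{-1}(r)$.

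For (a), the $S$-submodules of $M$ are precisely the right ideals $N$ of $R$ (the $R$-submodules of $M\cong R$) that are stable under $\sigma$. Here the hypothesis that $u$ is a \emph{central} unit is decisive: for any right ideal $I$ one has $uI=Iu\subseteq I$ and likewise $u^{-1}I\subseteq I$, whence $I=u(u^{-1}I)\subseteq uI\subseteq I$ forces $uI=I$; applying this to the right ideal $\alpha^{-1}(N)$ gives $\sigma(N)=u\,\alpha^{-1}(N)=\alpha^{-1}(N)$. Thus $N$ is $\sigma$-stable iff $\alpha^{-1}(N)\subseteq N$, and since $R$ is Noetherian the standard ascending-chain argument behind Lemma \ref{stable}(a) shows this is equivalent to $\alpha(N)=N$, i.e. to $N$ being an $\alpha$-stable right ideal. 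The assignment $N\mapsto N$ is an inclusion-preserving bijection with inclusion-preserving inverse, giving the claimed lattice isomorphism. Part (b) is then immediate once $R$ is commutative, so that right ideals coincide with ideals: the nonzero module $M$ is simple iff its only $S$-submodules are $0$ and $M$, which by (a) means the only $\alpha$-stable ideals of $R$ are $0$ and $R$, that is, $R$ is $\alpha$-simple.

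For (c), the implication ``$R$ $\alpha$-simple $\Rightarrow M$ Artinian'' is free from (b), since a simple module is Artinian. For the converse I would argue contrapositively: if $R$ is not $\alpha$-simple, choose a nonzero proper $\alpha$-stable ideal $I$ and consider $I\supseteq I^2\supseteq I^3\supseteq\cdots$. Each $I^n$ is $\alpha$-stable, since $\alpha(I^n)=\alpha(I)^n=I^n$, and nonzero, since $R$ is a domain, so by (a) these ideals correspond to $S$-submodules of $M$; it remains only to check the chain is strictly descending. This is exactly where the two special hypotheses enter, and is the step I expect to require the most care: if $I^{n+1}=I^n$ for some $n\geq 1$, then $I^m=I^n$ for all $m\geq n$, so $(I^n)^2=I^{2n}=I^n$ exhibits $I^n$ as an idempotent ideal, which must be $0$ or $R$ because $R$ has no proper idempotent ideals — contradicting $0\neq I^n\subseteq I\subsetneq R$. (That a Noetherian domain has no proper idempotent ideals follows from Nakayama's lemma.) Hence the chain is strictly descending, $M$ is not Artinian, and the contrapositive is proved. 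Apart from this idempotent-ideal step, the only subtlety is the clean identification of the $\theta$-action as $\sigma(r)=u\alpha^{-1}(r)$, on which all three parts rest.
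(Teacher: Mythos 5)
Your proof is correct and takes essentially the same route as the paper: your identification of $S/(u-\theta)S$ with $R$ carrying the twisted $\theta$-action $r\mapsto u\alpha^{-1}(r)$ is precisely the paper's division by the monic polynomial $u-\theta$ combined with its key identity $r\theta=-(u-\theta)\alpha^{-1}(r)+u\alpha^{-1}(r)$, and your submodule correspondence (via centrality of $u$ and the Noetherian ascending-chain argument) matches the paper's part (a). Your contrapositive treatment of (c), using the strictly descending chain of powers $I^n$ and the absence of proper idempotent ideals, is the same argument the paper gives directly, the only cosmetic difference being that you justify the Noetherian-domain case by Nakayama's lemma where the paper cites the Krull Intersection Theorem.
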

\begin{proof}
(a) Notice that, for any element $r\in R$,
\begin{equation}\label{calc} r\theta=\theta\alpha^{-1}(r)=-(u-\theta)\alpha^{-1}(r)+u\alpha^{-1}(r).
\end{equation}
Let $N$ be an $\alpha$-stable right ideal of $R$. Then (\ref{calc}) and the centrality of  $u$ in $R$ yield  that $J :=(u-\theta)S+N$ is a right ideal of $S$.

Let $J$ be a right ideal of $S$ with $(u-\theta)S \subseteq J$. Let $w\in J$. Using division by the monic polynomial $u-\theta$ we can find $h\in S$ and $r\in R$ such that $w=(u-\theta)h+r$
and $J=(u-\theta)S+N$ follows for $N :=J\cap R$. Moreover, applying (\ref{calc}) with $r \in N$ and using our assumptions on $u$, it follows that $\alpha^{-1}(N)\subseteq N$. By Lemma \ref{stable}(a), $N$ is an $\alpha$-stable right ideal of $R$. This completes the proof of (a).

(b) This is a direct consequence of (a).

(c) Suppose that $S/(u-\theta)S$ is Artinian as a right $S$-module. Let $I$ be a proper $\alpha$-stable ideal  of $R$.   It follows from (a) that there exists $n\in \N$ such that $I^n=I^{2n}$. The stated hypotheses force $I$ to be $(0)$, so $R$ is $\alpha$-simple.  The converse is given by (b). The Krull Intersection Theorem \cite[Corollary 5.4]{E} ensures that Noetherian domains satisfy the stated hypothesis.
  \end{proof}

\section{Reduction to the primitive case}

\subsection{The second layer condition}\label{slc}

Given a non-zero module $M$ over a right Noetherian ring $T$,  an \emph{affiliated submodule} of $M$ is a submodule of the form $\mathrm{Ann}_M(P)=\{m\in M:mP=0\}$, where $P$ is an ideal of $T$ which is maximal amongst the annihilators of non-zero submodules of $M$. It is easy to see that such an ideal $P$ is a prime ideal of $T$, \cite[Proposition 3.12]{GoodearlWarfield}. An \emph{affiliated series} for $M$ is a series $0=M_0 \subset M_1 \subset \cdots \subset M_n=M$ of submodules of $M$, such that for each $i\in\{1,\ldots,n\}$, $M_i/M_{i-1}$ is an affiliated submodule of $M/M_{i-1}$. The ideals $P_i:=\mathrm{Ann}_T(M_i/M_{i-1})$ are called the \emph{affiliated primes} of $M$ with respect to the given series. Full details are in \cite[Chapter 8]{GoodearlWarfield}, for example.

We briefly recall here the key ideas we need from the theory of prime links for Noetherian rings. For more details, see for example \cite[Chapter 12]{GoodearlWarfield}, \cite{Jategaonkar86}. Let $T$ be a Noetherian ring and $M$ a finitely generated right $T$-module. Suppose that
\begin{equation}\label{extend} 0\subset U \subset M
  \end{equation}
is an affiliated series of $M$, and suppose that $U$ is an essential submodule of $M$, with corresponding affiliated prime ideals $Q$ and $P$, so that $UQ = 0 = (M/U)P$. To understand the possible relations between the modules $U$ and $M/U$, normalise (\ref{extend}) by replacing $M$ if necessary with a submodule $M'$ of $M$ properly containing $U$ such that $I:=\mathrm{Ann}_T(M')$ is maximal amongst the annihilators of those submodules of $M$ which properly contain $U$. Notice that $PQ \subseteq I$. With this normalisation, we continue to write $M$ for the replacement module.

Then, by Jategaonkar's so-called Main Lemma, \cite[Theorem 12.1]{GoodearlWarfield}, \cite{Jategaonkar86}, there are two possibilities: either

\indent (a) $PQ\subseteq I\subset P\cap Q$, and $(P\cap Q)/I$ is torsion-free as a left $T/P$-module and as a right $T/Q$-module; or

\indent (b) $P\subset Q$ and $MP=0$.

 In partial converse, if (a) holds for primes $P$ and $Q$ of $T$, then a right $T$-module $M$ exists as above, with $U$ [resp. $M/U$] being $T/Q $ [resp. $T/P$] - torsion-free, \cite[Theorem 12.2]{GoodearlWarfield}. If case (a) holds, we say that \emph{there is a link from $P$ to $Q$}, and we write $P\link Q$. If case (a) holds and $U$ is $T/Q$-torsion-free, then $M/U$ is $T/P$-torsion-free. On the other hand, if (b) holds, then both $M$ and $M/U$ are $T/P$-torsion.

 A Noetherian ring $T$ is said to satisfy the (right) \emph{strong second layer condition (s.s.l.c.)} if, for every prime ideal $Q$ of $T$, only case (a) can occur in the setting of (\ref{extend}). The formally weaker (right) \emph{second layer condition (s.l.c.)} holds for $T$ if, for all primes $Q$, only case (a) occurs when $U$ is in addition required to be $T/Q$-torsion-free.


Those Noetherian rings satisfying the s.s.l.c. form an important and large subclass. For our purposes, the key result in this direction is the following proposition. Recall (for example, from \cite[page 224]{GoodearlWarfield}) that a Noetherian ring $T$ is \emph{AR-separated} if, for every prime ideal $Q$ of $T$ and ideal $I$ with $Q \subset I \subset T$, there is an ideal $J$ of $T$ with $Q \subset J \subseteq I$ such that $J/Q$ has the Artin-Rees property in $T/Q$.

\begin{proposition}\label{ARsep1}\cite[Theorem 13.4]{GoodearlWarfield} If the Noetherian ring $T$ is AR-separated, then it satisfies the s.s.l.c.
\end{proposition}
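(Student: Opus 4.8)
The strong second layer condition asserts that, in the normalised setting of (\ref{extend}), only alternative (a) of Jategaonkar's Main Lemma ever occurs, so the plan is to assume AR-separation and derive a contradiction from the occurrence of alternative (b). Thus suppose that $Q$ is a prime of $T$, that $M$ is finitely generated with essential affiliated submodule $U=\mathrm{Ann}_M(Q)$, that $P=\mathrm{Ann}_T(M/U)$, and that case (b) holds: $P\subset Q$ and $MP=0$. (The inclusion is genuinely proper: $MP=0$ gives $UP=0$, so $P\subseteq \mathrm{Ann}_T(U)=Q$, while $P=Q$ would force $U=\mathrm{Ann}_M(Q)=M$, against $U\subsetneq M$.)

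The one place the hypothesis enters is now: apply AR-separation to the prime $P$ and the ideal $Q$, which is legitimate since $P\subset Q\subset T$. This yields an ideal $J$ with $P\subset J\subseteq Q$ such that $\bar{J}:=J/P$ has the Artin--Rees property in $\bar{T}:=T/P$. Because $MP=0$, the module $M$ is a finitely generated $\bar{T}$-module, so the module form of the Artin--Rees property may be applied to the submodule $U$: there is an integer $n\geq 1$ with $U\cap M\bar{J}^{\,n}\subseteq U\bar{J}$.

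It remains to read off the consequences. Since $J\subseteq Q$ and $UQ=0$ we have $U\bar{J}=UJ=0$, whence $U\cap MJ^{n}=0$; and since $U$ is essential in $M$ this forces $MJ^{n}=0$. Therefore $(M/U)J^{n}=0$, so $J^{n}\subseteq \mathrm{Ann}_T(M/U)=P$, and primeness of $P$ gives $J\subseteq P$, contradicting $P\subsetneq J$. Hence alternative (b) is impossible, only (a) can occur, and $T$ satisfies the strong second layer condition.

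The single step requiring genuine care --- and the one I would expect to be the main obstacle to write out in full --- is the passage from the Artin--Rees property of the ideal $\bar{J}$, as delivered by AR-separation, to its module-theoretic form for the pair $(M,U)$ over the Noetherian ring $\bar{T}$. I would handle this either by invoking the standard equivalence, for Noetherian rings, between the right-ideal and finitely-generated-module formulations of the Artin--Rees property, or by deriving directly the stable form $U\cap M\bar{J}^{\,n}\subseteq U\bar{J}$ that the argument uses. Everything else reduces to short manipulations with annihilators and the essentiality of $U$.
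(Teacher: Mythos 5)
Your proof is correct. The paper gives no proof of its own here — the proposition is quoted verbatim from \cite[Theorem 13.4]{GoodearlWarfield} — and your argument (apply AR-separation to the pair $P\subset Q$ arising in case (b) of the Main Lemma, pass to the module form of the Artin--Rees property for $U\subseteq M$ over $T/P$, then use essentiality of $U$ and primeness of $P$ to force the contradiction $J\subseteq P$) is essentially the standard proof in that reference, including the correctly flagged appeal to the Noetherian equivalence of the right-ideal and finitely-generated-module formulations of the AR property.
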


For example, Noetherian rings satisfying a polynomial identity and enveloping algebras of finite dimensional solvable Lie algebras are AR-separated, \cite{Jategaonkar86}. Of more relevance for us, however, is:

\begin{proposition}\label{ARsep}\cite{Poole90}
Let $R$ be a commutative Noetherian ring, $\alpha\in Aut(R)$ and $S=R[\theta; \alpha]$.
\begin{enumerate}
 \item Then $S$ is AR-separated, and hence satisfies s.s.l.c.
 \item Let $P$ be a prime ideal of $S$ such that $(P\cap R)S=P$. Then $P$ has the Artin-Rees property. In particular, if $P\link Q$ or $Q\link P $, then $Q = P$.
\end{enumerate}
\end{proposition}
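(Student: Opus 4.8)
The plan is to prove part (a) by showing that $S$ is AR-separated, whence the strong second layer condition is immediate from Proposition \ref{ARsep1}; and to prove part (b) directly through a Rees-ring computation, deducing the assertion about links from standard link theory. For the AR-separation I would fix a prime $Q$ and an ideal $I$ with $Q \subsetneq I \subsetneq S$ and seek an ideal $J$, $Q \subsetneq J \subseteq I$, whose image in $\bar S := S/Q$ has the Artin-Rees property; equivalently, I want to exhibit a nonzero AR-ideal inside the nonzero ideal $\bar I := I/Q$ of the prime ring $\bar S$. The argument splits according to whether $\theta \in Q$.

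If $\theta \in Q$, then $\bar S$ is a homomorphic image of $S/\theta S \cong R$, hence commutative Noetherian, so every ideal has the AR property by the classical Artin-Rees lemma and one may take $J = I$. The substantive case is $\theta \notin Q$: here the image $\bar\theta$ of $\theta$ is a nonzero normal, hence regular, element of the prime Noetherian ring $\bar S$, and one studies $\bar S$ via the skew-Laurent localisation $S_\theta = R[\theta^{\pm 1};\alpha]$, whose prime spectrum is understood (Poole; cf. the Goodearl-Letzter stratification). The crux is a dichotomy: either $Q$ is induced from $R$, in which case powers of the normal regular element $\bar\theta$ together with the image of $Q \cap R$ produce AR-ideals inside $\bar I$ exactly as in part (b) below; or $Q$ is non-induced, in which case $\bar S$ turns out to be module-finite over an affine central subalgebra, hence PI and therefore fully bounded, so that AR-separation follows from Jategaonkar's results on FBN rings. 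Establishing and exploiting this dichotomy -- that is, controlling the non-induced prime factors -- is the main obstacle, and is the heart of Poole's theorem.

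For part (b), write $\germ q = P \cap R$; since $P$ is prime and $P = \germ q S$, Lemma \ref{stable} shows that $\germ q$ is an $\alpha$-prime, in particular $\alpha$-stable, ideal of $R$. A direct computation using $S\germ q = \germ q S$ then gives $P^n = \germ q^n S$ for all $n$. Because $\alpha$ stabilises $\germ q$ it stabilises each power $\germ q^n$, so it induces a graded automorphism $\tilde\alpha$ of the commutative Rees ring $\mathcal{R} := \bigoplus_{n \geq 0}\germ q^n$, which is Noetherian as $R$ is. I would then identify the Rees ring $\bigoplus_{n\geq 0} P^n = \bigoplus_{n\geq 0}\germ q^n S$ of $S$ at $P$ with the skew polynomial ring $\mathcal{R}[\theta;\tilde\alpha]$; being a skew polynomial extension of a Noetherian ring, this is Noetherian. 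By the standard criterion that Noetherianity of the Rees ring forces the Artin-Rees property, $P$ has the right AR property, and since the computation $P^n = \germ q^n S = S\germ q^n$ is left-right symmetric, $P$ also has the left AR property.

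Finally, the claim that $P$ admits no nontrivial links follows from general link theory. If $P \link Q$ with $P \neq Q$, Jategaonkar's Main Lemma produces (in case (a)) a nonzero $(S/P,S/Q)$-bimodule $B = (P\cap Q)/I$ with $PB = 0 = BQ$ and torsion-free on each side; the two-sided AR property of $P$ is incompatible with the existence of such a torsion-free bimodule unless $P = Q$ (see \cite{GoodearlWarfield}, \cite{Jategaonkar86}), giving a contradiction. The symmetric argument, using the left AR property, rules out $Q \link P$. I expect the only genuinely delicate point to be the non-induced case of part (a); part (b) and the link statement are routine once the Rees-ring identification and the standard AR-versus-links principle are in hand.
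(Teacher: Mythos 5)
The paper offers no proof of this proposition: it is imported wholesale from Poole \cite{Poole90}, so your attempt can only be judged on its own merits rather than against an internal argument. Your part (b) is correct and essentially complete, and is the expected proof. Since $P=\germ q S$ (with $\germ q=P\cap R$) is proper, $\theta\notin P$, so $\germ q$ is $\alpha$-prime, hence $\alpha$-stable, by Lemma \ref{stable}(c); stability gives $S\germ q=\germ q S$ and $P^n=\germ q^nS=S\germ q^n$, the identification of $\bigoplus_{n\geq 0}P^n$ with $\mathcal{R}[\theta;\tilde{\alpha}]$ over the Noetherian commutative Rees ring $\mathcal{R}=\bigoplus_{n\geq 0}\germ q^n$ is valid, the graded-right-ideal argument turns Noetherianity of the Rees ring into the right AR property, symmetry gives the left AR property, and a prime with the AR property on both sides is linked to and from no prime other than itself (\cite[Chapter 13]{GoodearlWarfield}, \cite{Jategaonkar86}).

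Part (a), by contrast, contains a genuine gap, and you have also located the difficulty in the wrong place. The non-induced case ($\theta\notin Q$, $(Q\cap R)S\subsetneq Q$) is the easy one: by Lemma \ref{stable}(c) and \cite[Theorem 4.3]{Irving79} the automorphism induced on $R/(Q\cap R)$ has finite order, so $S/(Q\cap R)S$, and hence its factor $S/Q$, is a Noetherian PI ring by \cite{DamianoShapiro}, and Noetherian PI rings are AR-separated (\cite{Jategaonkar86}, as quoted after Proposition \ref{ARsep1}); this is exactly the chain used in Theorem \ref{goodprimes}. Your alternative route -- ``module-finite over an affine central subalgebra, hence FBN'' -- is both unsubstantiated and insufficient, since FBN yields the s.s.l.c.\ directly but does not by itself yield AR-separation. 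The genuine obstacle is the induced case $Q=(Q\cap R)S$, which you dismiss as working ``exactly as in part (b)''. It does not: part (b) shows that the prime $Q$ itself has the AR property in $S$, whereas AR-separation at $Q$ demands a nonzero AR-ideal inside \emph{every} nonzero ideal $\bar{I}$ of $\bar{S}:=S/Q\cong\bar{R}[\theta;\bar{\alpha}]$, and such an $\bar{I}$ need not contain a power of $\bar{\theta}$ or a nonzero induced ideal $N\bar{S}$ (note also that ``the image of $Q\cap R$'' is zero in $\bar{S}$, so that phrase carries no content). For a concrete failure of your mechanism take $\bar{R}=\C[x]$ and $\bar{\alpha}(x)=qx$ with $q$ not a root of unity, so that $Q=(0)$ is an induced prime: the ideal $x\theta\bar{S}$ of the quantum plane $\bar{S}$ meets $\C[x]$ in $(0)$ and contains no power of $\theta$. (That particular ideal does have the AR property, but only because $x\theta$ happens to be normal -- an argument of a different kind, which does not obviously extend to arbitrary ideals of arbitrary induced prime factors.) Producing AR-ideals inside arbitrary ideals of $\bar{R}[\theta;\bar{\alpha}]$ for $\alpha$-prime $\bar{R}$ is precisely the substance of Poole's theorem, and your sketch defers it -- as you yourself concede -- rather than proving it.
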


\subsection{The second layer condition and $(\diamond)$ } \label{diamondslc}

Lenagan's Lemma, \cite[Theorem 7.11]{GoodearlWarfield}, guarantees that if $T$ is a Noetherian ring and $P$ and $Q$ are prime ideals of $T$ with $P\link Q$, then $T/P$ is Artinian if and only if $T/Q$ is Artinian. From this and Jategaonkar's Main Lemma the following well-known consequence follows easily:

\begin{proposition}\label{sslc}
Let $T$ be a Noetherian ring satisfying the s.l.c. and let $V$ be a simple right $T$-module with $Q := \mathrm{Ann}_T(V)$. Suppose that $T/Q$ is Artinian. Then every finitely generated essential extension of $V$ is Artinian.
\end{proposition}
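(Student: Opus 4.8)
The plan is to reduce the statement to showing that every affiliated prime of a finitely generated essential extension $M$ of $V$ has Artinian factor ring. Granting that, each affiliated factor of $M$ is a finitely generated module over an Artinian ring, hence Artinian, and $M$ --- being a finite extension built from these factors --- is then Artinian. So first I would fix such an $M$ and record its shape: it is Noetherian (finitely generated over the Noetherian ring $T$), and since the simple module $V$ is essential in $M$, every nonzero submodule of $M$ contains $V$; thus $M$ is uniform with $\mathrm{soc}(M)=V$. Choose an affiliated series $0=M_0\subset M_1\subset\cdots\subset M_n=M$ with affiliated primes $P_i=\mathrm{Ann}_T(M_i/M_{i-1})$. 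Because $M_1\neq 0$ meets the essential simple submodule $V$ we get $V\subseteq M_1$, and then $V$ stays essential in every $M_i$, so each $M_{i-1}$ is essential in $M_i$. The bottom prime is pinned down at once: $V\subseteq M_1$ together with $M_1P_1=0$ gives $P_1\subseteq\mathrm{Ann}_T(V)=Q$, and since $P_1$ is maximal among annihilators of nonzero submodules while $Q$ is itself one such annihilator, $P_1=Q$; hence $T/P_1$ is Artinian by hypothesis.

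The core of the argument is to carry the Artinian property up the series by induction on $i$. For each $i$ I would form $N_i:=M_{i+1}/M_{i-1}$ with its affiliated series $0\subset M_i/M_{i-1}\subset N_i$: the bottom term is the first affiliated submodule of $N_i$, with affiliated prime $P_i$; it is essential in $N_i$ (as $M_i$ is essential in $M_{i+1}$); and the top factor $M_{i+1}/M_i$ is annihilated by $P_{i+1}$. Being a first affiliated submodule, $M_i/M_{i-1}$ is $T/P_i$-torsion-free, which is exactly the hypothesis under which the s.l.c. forbids case (b) of Jategaonkar's Main Lemma. After the normalisation preceding the Main Lemma one therefore lands in case (a), producing a link $P'\link P_i$, where $P'$ is the annihilator of the (nonzero) normalised top factor. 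The point that makes this useful is that the top factor sits inside $M_{i+1}/M_i$, which is $T/P_{i+1}$-torsion-free; a nonzero submodule of a torsion-free module over the prime Noetherian ring $T/P_{i+1}$ is still faithful over it, because a nonzero ideal of a prime Noetherian ring contains a regular element. Hence $P'=P_{i+1}$, so the link is $P_{i+1}\link P_i$, and Lenagan's Lemma transfers Artinianness from $T/P_i$ to $T/P_{i+1}$. Starting from $T/P_1=T/Q$ Artinian, induction yields $T/P_i$ Artinian for all $i$.

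With all the $P_i$ maximal the conclusion follows: each $M_i/M_{i-1}$ is finitely generated over the Artinian ring $T/P_i$, hence Artinian, and a module with a finite filtration by Artinian factors is Artinian, so $M$ is Artinian. The step I expect to be the main obstacle --- or at least the most delicate --- is the identification of the link as one between the \emph{consecutive} affiliated primes $P_i$ and $P_{i+1}$: one must set up the two-step essential affiliated series in $N_i$ so that the Main Lemma genuinely applies, and then control the normalisation (which shrinks the top factor and a priori could enlarge its annihilator) using the torsion-freeness of $M_{i+1}/M_i$ to force $P'=P_{i+1}$. Everything else --- the uniformity of $M$, the computation $P_1=Q$, and the final filtration argument --- is routine.
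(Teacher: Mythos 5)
Your overall frame (affiliated series, the identification $P_1=Q$, Lenagan's Lemma, the closing filtration argument) is the intended one --- the paper itself offers no written proof, only the citation of Jategaonkar's Main Lemma and Lenagan's Lemma --- but your induction step has a genuine gap, located exactly at the point you flagged as delicate. To apply the Main Lemma to $0 \subset M_i/M_{i-1} \subset M_{i+1}/M_{i-1}$ you need the bottom term to be essential, and your justification (``as $M_i$ is essential in $M_{i+1}$'') is a non sequitur: essentiality does not pass to quotients, and in this situation it really does fail. Concretely, let $T$ be the five-dimensional algebra of upper triangular $3\times 3$ matrices over $k$ whose $(1,2)$ entry is zero; $T$ is Artinian, hence Noetherian with the s.l.c., and the hypotheses of the proposition hold trivially. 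The injective hull $M=E_T(S_3)$ of the simple right module $S_3=e_{33}T$ has length $3$, with $\mathrm{soc}(M)=S_3$ essential and $M/S_3\cong S_1\oplus S_2$. A valid affiliated series is $0\subset S_3\subset e_{11}T\subset M$ (identifying $e_{11}T$ with its copy inside $M$), with affiliated primes $P_1=\mathrm{ann}(S_3)$, $P_2=\mathrm{ann}(S_1)$, $P_3=\mathrm{ann}(S_2)$. At $i=2$ the factor $M_2/M_1\cong S_1$ meets the submodule $S_2\subseteq M/M_1$ trivially, so it is not essential in $M_3/M_1$; and no repair by shrinking $N_2$ is possible, since there is no submodule strictly between $M_2/M_1$ and $M_3/M_1$. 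Worse, the conclusion the step is meant to deliver is itself false here: a direct computation gives $P_3P_2=P_3\cap P_2$, so no ideal $I$ with $P_3P_2\subseteq I\subset P_3\cap P_2$ exists and there is no link $P_3\rightsquigarrow P_2$; the link that does exist is $P_3\rightsquigarrow P_1$. Thus consecutive affiliated primes of an affiliated series need not be linked. What is true, and what the genuine proof establishes, is that each $P_{i+1}$ is linked to \emph{some} earlier co-Artinian prime, and producing that link requires a differently built two-step subquotient (for instance, factoring by a submodule $W\subseteq M_i$ chosen maximal so that $M_i/W$ remains essential in $M_{i+1}/W$, with further bookkeeping to control which prime then appears at the bottom; or Jategaonkar's fundamental-series analysis of $E_T(V)$). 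That restructuring is the real content of the proposition, which is why the paper states it as well known rather than proving it.

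A smaller, recurrent inaccuracy: twice you assert that an affiliated factor is torsion-free over its affiliated prime. Maximality of the annihilator yields only full faithfulness (every nonzero submodule has annihilator exactly the affiliated prime), which is strictly weaker than torsion-freeness --- over a simple Noetherian ring such as the Weyl algebra $A_1(\C)$ every module, including every torsion module, is fully faithful. In your induction the bottom module is rescued, because by the inductive hypothesis $T/P_i$ is simple Artinian and over such a ring every module is torsion-free; and for the top factor, full faithfulness is exactly what pins the normalised annihilator down to $P_{i+1}$, so that conclusion also stands. But these repairs do not touch the essentiality/linking gap above, which is fatal to the induction as you have structured it.
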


We are now in a position to deal with property $(\diamond)$ for many simple modules over skew polynomial rings.

\begin{theorem}\label{goodprimes}
Let $R$ be a commutative Noetherian ring, $\alpha\in Aut(R)$ and $S=R[\theta;\alpha]$. Let $V$ be a simple right $S$-module and let $Q=ann_S(V)$. If $(Q\cap R)S\subset Q$, then $S/Q$ is Artinian and (hence) every finitely generated essential extension of $V$ is Artinian.
\end{theorem}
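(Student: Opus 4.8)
The plan is to prove the substantive assertion, that $S/Q$ is Artinian; the final clause then comes for free. Indeed $S$ is Noetherian and, by Proposition \ref{ARsep}(a), satisfies the strong (hence ordinary) second layer condition, so once $S/Q$ is known to be Artinian, Proposition \ref{sslc} applied to $V$ yields at once that every finitely generated essential extension of $V$ is Artinian. I would first dispose of the case $\theta \in Q$: here $Q$ contains the two-sided ideal $\theta S = S\theta$, so $S/Q$ is a homomorphic image of $S/\theta S \cong R$ and is therefore commutative; a commutative primitive ring is a field, so $S/Q$ is a field, in particular Artinian.

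So assume $\theta \notin Q$. Then $P := Q \cap R$ is $\alpha$-prime by Lemma \ref{stable}(c), so by Lemma \ref{stable}(b) $PS$ is prime and $\bar S := S/PS \cong \bar R[\theta;\bar\alpha]$ with $\bar R := R/P$ commutative Noetherian and $\bar\alpha$-prime. Writing $\bar Q := Q/PS$, the module $V$ is a faithful simple module over $\bar S/\bar Q \cong S/Q$, so $\bar Q$ is a \emph{nonzero} (by the hypothesis $PS \subsetneq Q$) primitive ideal of $\bar S$ with $\bar Q \cap \bar R = 0$ and $\theta \notin \bar Q$. I would record here that $\bar R$ is reduced: its nilradical $N$ is $\bar\alpha$-stable and nilpotent, so $N\bar S$ is a nilpotent ideal contained in the prime $\bar Q$, forcing $N \subseteq \bar Q \cap \bar R = 0$.

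The heart of the argument is to show these hypotheses force $\bar\alpha$ to have finite order on $\bar R$. I would invert both the set $\mathcal C$ of regular elements of $\bar R$ (an $\bar\alpha$-stable Ore set consisting of regular elements of $\bar S$) and the normal element $\theta$, landing in $\Lambda := \fract(\bar R)[\theta^{\pm 1};\bar\alpha]$, where $\fract(\bar R)$ is the total quotient ring of the reduced Noetherian ring $\bar R$. Since $\bar Q \cap \mathcal C = \emptyset$ and $\bar Q$ contains a nonzero (hence non-$\mathcal C$-torsion) element, the extension $\bar Q^{e}$ is a proper nonzero two-sided ideal of $\Lambda$. On the other hand $\fract(\bar R) = \prod_{j} F_j$ is a finite product of fields indexed by the minimal primes of $\bar R$, cyclically permuted by $\bar\alpha$ with some period $m$ by Lemma \ref{stable}(d); thus $\Lambda$ is a crossed product of $\mathbb Z$ over $\prod_j F_j$ which is Morita equivalent to the skew-Laurent ring $F_0[\phi^{\pm 1};\bar\alpha^{m}]$ over the field $F_0 = \fract(\bar R/\mathfrak p_0)$. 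The latter ring is \emph{simple} as soon as $\bar\alpha^{m}$ has infinite order on $F_0$, which is the case precisely when $|\bar\alpha| = \infty$. A simple ring has no proper nonzero ideals, contradicting the existence of $\bar Q^{e}$; hence $|\bar\alpha| < \infty$.

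With $|\bar\alpha| < \infty$ in hand the conclusion is immediate: by the theorem of Pascaud and Valette the skew polynomial ring $\bar S = \bar R[\theta;\bar\alpha]$ over the semiprime commutative ring $\bar R$ satisfies a polynomial identity, so $\bar S/\bar Q \cong S/Q$ is a primitive PI ring and is therefore simple Artinian by Kaplansky's theorem, giving $S/Q$ Artinian as required. The step I expect to be the main obstacle is precisely the reduction to finite order in the third paragraph: setting up the two localizations correctly, verifying that $\bar Q$ extends to a proper nonzero ideal, and above all descending from the merely $\bar\alpha$-prime ring $\bar R$ to the domain $\bar R/\mathfrak p_0$ so as to invoke the clean simplicity criterion for $F_0[\phi^{\pm 1};\bar\alpha^{m}]$, all while correctly bookkeeping the cyclic $\bar\alpha$-orbit of minimal primes furnished by Lemma \ref{stable}(d).
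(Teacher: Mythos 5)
Your proposal is correct, and its skeleton coincides with the paper's: the case $\theta\in Q$ is handled identically (commutative primitive factor, hence a field); and for $\theta\notin Q$ both arguments pass to $\bar{S}:=S/(Q\cap R)S\cong\bar{R}[\theta;\bar{\alpha}]$, show that $\bar{\alpha}$ has finite order, deduce that $\bar{S}$ satisfies a polynomial identity, apply Kaplansky's theorem to the primitive ideal $Q/(Q\cap R)S$ to get $S/Q$ Artinian, and finish via Propositions \ref{ARsep}(a) and \ref{sslc}. The genuine difference is the finite-order step: the paper disposes of it in one line by citing \cite[Theorem 4.3]{Irving79} (a nonzero prime of $\bar{S}$ contracting to $(0)$ in $\bar{R}$ and not containing $\theta$ forces $|\bar{\alpha}|<\infty$), whereas you reprove this fact from scratch by inverting $\mathcal{C}\cup\{\theta\}$, identifying $\Lambda=\fract(\bar{R})[\theta^{\pm 1};\bar{\alpha}]$ as Morita equivalent to a skew Laurent ring $F_0[\phi^{\pm 1};\bar{\alpha}^m]$ over a field via the transitive action of $\bar{\alpha}$ on the minimal primes (Lemma \ref{stable}(d)), and invoking the classical simplicity criterion for skew Laurent rings over fields; your conjugation argument that $|\bar{\alpha}^m|_{F_0}|<\infty$ forces $|\bar{\alpha}|<\infty$ is also needed and is correct. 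What your route buys is self-containedness — it is in effect a proof of the relevant case of Irving's theorem, by essentially the same matrix-ring mechanism the paper itself uses later in Lemma \ref{square}(e) — at the cost of length and of some Ore-localization bookkeeping that you state rather breezily: properness of the two-sided ideal of $\Lambda$ generated by $\bar{Q}$ does not follow from $\bar{Q}\cap\mathcal{C}=\emptyset$ alone; one needs $\bar{Q}\cap\{c\theta^n: c\in\mathcal{C},\, n\geq 0\}=\emptyset$ (which holds by primeness of $\bar{Q}$, normality of $\theta$, $\theta\notin\bar{Q}$ and $\bar{Q}\cap\bar{R}=0$), together with a denominator-clearing argument showing that $1\in\Lambda\bar{Q}\Lambda$ would force an element of that Ore set into $\bar{Q}$. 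These verifications are routine, so the proof stands. A cosmetic difference: for the PI step you cite \cite{PascaudValette}, which is why you need $\bar{R}$ reduced (and you do check this), while the paper cites \cite{DamianoShapiro}; the paper itself notes the two are interchangeable here.
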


\begin{proof} Suppose first that $\theta\in Q$. Then $Q/\theta S$ is a primitive ideal of the commutative ring $S/\theta S\simeq R$, so $S/Q$ is a field. Therefore the desired property of $V$ follows from Propositions \ref{ARsep}(a) and \ref{sslc}.

Suppose now that $\theta\notin Q$ and $(Q\cap R)S\subset Q$. Then $Q\cap R$ is $\alpha$-prime by Lemma \ref{stable}(c). By \cite[Theorem 4.3]{Irving79}
the order of the automorphism of $R/(Q\cap R)$ induced by $\alpha$ is finite. Hence, by \cite[Corollary 10]{DamianoShapiro}, $S/(Q\cap R)S\simeq (R/(Q\cap R))[\theta; \alpha|_{R/(Q\cap R)}]$ is a ring satisfying a polynomial identity. By Kaplansky's Theorem \cite[Theorem I.13.3]{BrownGoodearl} applied to the primitive ideal $Q/(Q\cap R)S$ of $S/(Q\cap R)S$, we deduce once again that $S/Q$ is Artinian. Thus, again, $E_S(V)$ is locally Artinian by Propositions \ref{ARsep}(a) and \ref{sslc}.
\end{proof}

The following corollary of Theorem \ref{goodprimes} in large part reduces the analysis of $(\diamond)$, for skew polynomial rings $S$ of automorphism type, to the case where $S$ is a primitive ring. Recall that a Noetherian ring $T$ is \emph{polynormal} if for every
pair of distinct ideals $I$ and $J$ of $T$ with $I\subset J$, there is an element $a\in J\setminus I$ such
that $aS + I= Sa + I$.

\begin{corollary}\label{primitive} Let $R$ be a commutative Noetherian ring, $\alpha\in Aut(R)$ and $S=R[\theta; \alpha]$. Consider the following statements:
\begin{enumerate}
\item $S$ satisfies $(\diamond)$;
\item $E_S(V)$ is locally Artinian for every simple right $S$-module $V$ whose annihilator $Q$ satisfies $Q=(Q\cap R)S$;
\item every primitive factor of $S$ of the form $S/(P\cap R)S$ satisfies $(\diamond)$.
\end{enumerate}
Then (a) and (b) are equivalent and imply (c). If additionally either $S$ is polynormal or every primitive ideal $P$ of $S$ of the form $P=(P\cap R)S$  is generated by a normal element, then all the above statements are equivalent.
\end{corollary}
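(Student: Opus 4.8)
The plan is to separate the three logical implications, handling the genuinely hard content only in the last one.

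First, (a)$\Rightarrow$(b) is immediate, since (b) merely restricts the defining requirement of $(\diamond)$ to a subclass of simple modules. For (b)$\Rightarrow$(a), I would argue by a dichotomy on the annihilator $Q=\mathrm{Ann}_S(V)$ of an arbitrary simple module $V$. Since $(Q\cap R)S\subseteq Q$ always, either $(Q\cap R)S\subsetneq Q$ or $Q=(Q\cap R)S$. In the first case Theorem \ref{goodprimes} shows directly that $S/Q$ is Artinian and $E_S(V)$ is locally Artinian; in the second case this is exactly hypothesis (b). Hence $E_S(V)$ is locally Artinian for every simple $V$, which is (a). For (a)$\Rightarrow$(c), I would first check that for a primitive ideal $P$ with $P=(P\cap R)S$ one has $\theta\notin P$, so that $P\cap R$ is $\alpha$-prime (Lemma \ref{stable}(c)) and $(P\cap R)S$ is a genuine ideal of $S$ (Lemma \ref{stable}(b)); then Proposition \ref{reduction}(c) yields $(\diamond)$ for the factor $S/(P\cap R)S$.

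The substance is the converse (c)$\Rightarrow$(b) under the extra hypotheses. Fix a simple $V$ whose annihilator $Q=(Q\cap R)S$ is induced from $R$; then $Q$ is a primitive ideal of the required form, so by (c) the ring $\overline{S}:=S/Q$ satisfies $(\diamond)$. Writing $E=E_S(V)$ and $E_n=\mathrm{Ann}_E(Q^n)$, two structural facts drive the argument. First, $S$ is AR-separated and satisfies the s.s.l.c.\ (Proposition \ref{ARsep}(a)), and $Q$ has trivial clique $\{Q\}$ (Proposition \ref{ARsep}(b)); by Jategaonkar's theory the affiliated primes of any finitely generated (necessarily essential) extension $W$ of $V$ inside $E$ all lie in this clique, forcing $WQ^n=0$, i.e.\ $W\subseteq E_n$, for some $n$. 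Second, $E_1=\mathrm{Ann}_E(Q)=\mathrm{Hom}_S(\overline{S},E)$ is injective over $\overline{S}$ and essential over $V$, so $E_1=E_{\overline{S}}(V)$, which is locally Artinian by (c).

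It then suffices to show each layer $E_n/E_{n-1}$ is locally Artinian. Sending $x\in E_n$ to the map $q\mapsto xq$ yields an embedding of right $\overline{S}$-modules
\[
E_n/E_{n-1}\hookrightarrow \mathrm{Hom}_{\overline{S}}\!\left(Q^{n-1}/Q^n,\;E_{\overline{S}}(V)\right),
\]
where $Q^{n-1}/Q^n$ is regarded as an $(\overline{S},\overline{S})$-bimodule. Here the normality hypotheses enter: when $Q=aS=Sa$ for a normal element $a$, regularity of $a$ gives $Q^{n-1}/Q^n\cong\overline{S}$ as right modules, with left action twisted by the automorphism determined by $a$, so the Hom-module is a twist of $E_{\overline{S}}(V)$ and hence locally Artinian; in the polynormal case a polynormal generating sequence for $Q$ provides a finite filtration of $Q^{n-1}/Q^n$ by such twisted cyclic subquotients, and the Hom-module is correspondingly filtered by locally Artinian modules. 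Either way every $E_n/E_{n-1}$ is locally Artinian. Finally, given a finitely generated $W\subseteq E$ with $W\subseteq E_n$, the filtration $\{W\cap E_i\}$ has finitely generated factors embedding in the locally Artinian $E_i/E_{i-1}$, hence Artinian factors, so $W$ is Artinian; thus $E$ is locally Artinian and (b) holds.

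The main obstacle is this layer computation: verifying that $Q^{n-1}/Q^n$ is controlled as a bimodule by the chosen (poly)normal generators, and that twisting $E_{\overline{S}}(V)$ by the resulting automorphisms preserves local Artinian-ness. The polynormal case is the more delicate, since one must ensure the filtration of $Q^{n-1}/Q^n$ produced by a polynormal sequence has each subquotient of the twisted-cyclic form needed for the Hom-module to inherit local Artinian-ness from the single module $E_{\overline{S}}(V)$.
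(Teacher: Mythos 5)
Your treatment of (a)$\Leftrightarrow$(b) and (a)$\Rightarrow$(c) is exactly the paper's proof: the dichotomy $(Q\cap R)S\subsetneq Q$ versus $Q=(Q\cap R)S$ reduces everything to Theorem \ref{goodprimes}, and the implication to (c) is Proposition \ref{reduction}(c). For the converse the paper gives no argument at all, only citations to \cite[Theorem 9.3.4]{Jategaonkar86} and \cite[Lemmas 6.1, 6.3]{KenWarfield}; your two structural steps --- finitely generated submodules of $E=E_S(V)$ are killed by a power of $Q$ (s.s.l.c.\ from Proposition \ref{ARsep}(a) plus triviality of the clique of $Q$ from Proposition \ref{ARsep}(b)), followed by an analysis of the layers $E_n/E_{n-1}$ with $E_n=\mathrm{Ann}_E(Q^n)$, where $E_1=E_{\overline{S}}(V)$ for $\overline{S}=S/Q$ --- are precisely the mechanism behind those citations, and that part of your argument is correct.

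There is, however, a genuine gap in the layer step for the normal-element case: you assert that ``regularity of $a$'' gives $Q^{n-1}/Q^n\cong\overline{S}$. The hypothesis is only that $Q$ is generated by a \emph{normal} element, and nothing forces that element to be regular. Concretely, take $R=k[x,y]/(xy)$ and $\alpha(x)=\lambda x$, $\alpha(y)=\mu y$ with $\lambda,\mu$ multiplicatively independent and not roots of unity: the primitive ideals of $S$ induced from $R$ are exactly $xS$ and $yS$, each generated by a normal element which is a zero divisor, so the corollary's extra hypothesis holds while no regular generator exists. (A still simpler instance: $R=k\times k[x]$ with $\alpha=\mathrm{id}\times(x\mapsto x+1)$, where the unique induced primitive ideal is generated by the central idempotent $(1,0)$.) When $a$ is not regular there is no ``automorphism determined by $a$'' (the $s'$ with $sa=as'$ is not unique), and in the first example $Q^{n-1}/Q^n$ is a nonzero module killed by $yS$, hence not isomorphic to $\overline{S}$; so the identification of $\mathrm{Hom}_{\overline{S}}(Q^{n-1}/Q^n,E_{\overline{S}}(V))$ with a twist of $E_{\overline{S}}(V)$ collapses and the step fails as written. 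The repair uses normality alone and avoids Hom-modules and twisting entirely: for any $S$-submodule $Y$ with $E_{n-1}\subseteq Y\subseteq E_n$, the set $Ya^{n-1}$ is an honest $S$-submodule of $E_1$ (because $a^{n-1}S=Sa^{n-1}$), and $Y\mapsto Ya^{n-1}$ is strictly inclusion-preserving, since $y_2a^{n-1}=y_1a^{n-1}$ forces $y_2-y_1\in E_{n-1}$. Thus an infinite strictly descending chain inside a finitely generated submodule of $E_n/E_{n-1}$ would produce one inside a finitely generated submodule of $E_{\overline{S}}(V)$, contradicting (c). The same defect is compounded in the polynormal case, which you only sketch: there the elements of a normalizing sequence are normal merely modulo earlier terms and again need not be regular, so your filtration of $Q^{n-1}/Q^n$ ``by twisted cyclic subquotients'' needs the same correction before it can be carried out.
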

\begin{proof}
(a)$\Leftrightarrow$(b) is a consequence of Theorem \ref{goodprimes}. That (a)$\Rightarrow$(c) follows from  Proposition \ref{reduction}(c).

If $S$ satisfies one of the additional hypotheses then (c)$\Rightarrow$(a) follows from Theorem \ref{goodprimes} combined with \cite[Theorem 9.3.4]{Jategaonkar86}; see also \cite[Lemma 6.1 and Lemma 6.3]{KenWarfield}.
\end{proof}







\section{The primitive case}\label{prim}

\subsection{Primitive skew polynomial rings}\label{simples}

 We begin by recalling the results of \cite{JurekLeroy}, where Leroy and Matczuk presented necessary and sufficient conditions for the primitivity of $S =R[\theta; \alpha]$; see also \cite{Jordan}.

\begin{definition}\label{special} Given a ring $T$ and $\alpha\in \mathrm{Aut}(T)$, $T$ is \emph{$\alpha$-special} if there is an element $a$ of $T$ such that the following conditions are satisfied.
\begin{enumerate}
 \item For all $n\geq 1$, $N_n^{\alpha}(a):=a\alpha(a)\ldots\alpha^{n-1}(a)\neq 0$.
 \item For every non-zero $\alpha$-stable ideal $I$ of $T$, there exists $n\geq 1$ such that $N_n^{\alpha}(a)\in I$.
\end{enumerate}
When this occurs, the element $a$ is called an \emph{$\alpha$-special element}.
\end{definition}

From the definition it follows easily that an $\alpha$-special ring is $\alpha$-prime. Clearly, an $\alpha$-simple ring is $\alpha$-special, with $1$ as $\alpha$-special element in this case. However, there are $\alpha$-special rings which are not $\alpha$-simple. Consider, for example, the ring $R$ of formal power series $k[[X]]$, where $k$ is a field containing an element $q$ which is not a root of unity. Let $\alpha$ be the $k$-algebra automorphism defined by setting $\alpha (X) := qX$. Then $X$ is an $\alpha$-special element of $R$, but $\langle X \rangle$ is a proper $\alpha$-ideal of $R$.

Here is the characterisation of primitivity:

\begin{theorem}{\cite[Theorem 3.10]{JurekLeroy}}\label{JurekLeroy}
Let $R$ be a commutative Noetherian ring and let $\alpha \in \mathrm{Aut}(R)$. Then $S=R[\theta; \alpha]$ is primitive if and only if $R$ is $\alpha$-special and $\alpha$ has infinite order.
\end{theorem}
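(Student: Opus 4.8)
The plan is to characterise primitivity through the existence of a faithful simple right $S$-module, treating the two conjuncts ``$\alpha$ has infinite order'' and ``$R$ is $\alpha$-special'' separately, and to handle the two implications by, respectively, constructing such a module and analysing an arbitrary one. Throughout I use that a primitive ring is prime, so that $S$ primitive forces $S$ prime and hence, by Lemma~\ref{stable}, forces $R$ to be $\alpha$-prime; since an $\alpha$-special ring is automatically $\alpha$-prime, this costs nothing either way. The infinite-order condition is the easiest point. If $\alpha^n=\mathrm{id}$ for some $n\geq 1$ then $\theta^n$ is central, so $S$ is finitely generated as a module over its commutative Noetherian subring $R[\theta^n]$ and is therefore PI; but a primitive PI ring is simple Artinian by Kaplansky's theorem \cite[Theorem I.13.3]{BrownGoodearl}, contradicting the fact that $\theta S=S\theta$ is a proper non-zero ideal of $S$. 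Hence $S$ primitive forces $|\alpha|=\infty$, and in both directions I may assume this.

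The second preliminary reduction concerns how $\theta$ acts on a faithful simple module $V$. The subsets $V\theta$ and $\{v\in V: v\theta=0\}$ are both $S$-submodules of $V$, so by simplicity each is $0$ or $V$; since $\theta\neq 0$ and $V$ is faithful, $\theta$ must act bijectively on $V$. Thus $V$ is naturally a module over the skew-Laurent ring $R[\theta^{\pm 1};\alpha]$, and for every non-zero $\alpha$-stable ideal $I$ of $R$ faithfulness gives $VI\neq 0$, because $IS$ is a non-zero ideal of $S$ (Lemma~\ref{stable}) and $V(IS)=VI$. This last property is precisely the module-theoretic shadow of condition~(ii) of Definition~\ref{special}, and it is what I would exploit to produce an $\alpha$-special element. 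A short computation shows $\mathrm{ann}_R(v\theta^{-k})=\alpha^{k}(\mathrm{ann}_R(v))$, which records how the $R$-structure is twisted by $\alpha$ and is the source of the products $N_n^{\alpha}$.

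For sufficiency, suppose $R$ is $\alpha$-special with $\alpha$-special element $a$. The model to imitate is Lemma~\ref{artinian}: there, when $R$ is $\alpha$-simple the module $S/(u-\theta)S$ is simple and, as $S$ is then simple, faithful. In the $\alpha$-special case I would replace the central unit $u$ by $a$ and begin with the cyclic module $S/(\theta-a)S\cong R$, on which $\theta$ acts by the $\alpha^{-1}$-semilinear map $r\mapsto a\alpha^{-1}(r)$, so that the generator satisfies $\overline{1}\,\theta^{n}=N_n^{\alpha^{-1}}(a)$. Because $R$ is commutative and $\alpha$ invertible, conditions~(i) and~(ii) for $a$ are equivalent to the same conditions phrased with $\alpha^{-1}$; thus (i) guarantees $\overline{1}\,\theta^{n}\neq 0$ for all $n$, and (ii) guarantees that every non-zero $\alpha$-stable ideal of $R$ is met by some $N_n^{\alpha^{-1}}(a)$. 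The task is then to pass from $S/(\theta-a)S$ to a genuinely simple module on which $\theta$ acts bijectively — obtained by inverting the action of $a$, equivalently by working over $R[\theta^{\pm1};\alpha]$ — and to check, using~(ii), that its annihilator contains no non-zero $\alpha$-stable ideal; combined with $|\alpha|=\infty$ and Lemma~\ref{stable} this forces the annihilator to be $0$, yielding a faithful simple module. Since $a$ need not be a unit, the naive quotient $S/(\theta-a)S$ is itself \emph{not} simple (the operator $\theta$ has image the proper submodule $aR$), so this localisation step is where care is needed.

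For necessity, I start from a faithful simple $V$ with $\theta$ acting bijectively and aim to manufacture $a\in R$. Choosing $0\neq v\in V$ with $\mathrm{ann}_R(v)$ maximal, hence prime, among the annihilators of non-zero elements, the cyclic $R$-module $vR$ and its $\theta^{-1}$-translates should yield, via the twisting $\mathrm{ann}_R(v\theta^{-k})=\alpha^{k}(\mathrm{ann}_R(v))$, a distinguished ``leading coefficient'' $a$ governing the action of $\theta$; condition~(i) then records that $\theta$ acts without the products $N_n^{\alpha}(a)$ collapsing, while condition~(ii) is read off from $VI\neq 0$ for every non-zero $\alpha$-stable $I$. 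I expect this extraction of $a$, together with the verification of~(ii), to be the main obstacle: it requires understanding exactly which leading-coefficient ideals can occur for a maximal right ideal of zero core, and it is precisely here that the argument must separate the genuinely $\alpha$-special (non-$\alpha$-simple) case from the $\alpha$-simple one. The Noetherian and commutativity hypotheses on $R$, which keep $\alpha$-stable ideals well behaved and reconcile $N_n^{\alpha}$ with $N_n^{\alpha^{-1}}$, are what I would lean on to complete this step.
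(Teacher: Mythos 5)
Your proposal is a programme rather than a proof, and it has genuine gaps at precisely the two places where the real content of the theorem lies. (For context: the paper does not prove this statement either; it imports it from \cite[Theorem 3.10]{JurekLeroy}, and re-proves only the sufficiency direction, for commutative $R$, in Proposition \ref{simplemodule}.) What you do establish correctly is the easy fragment: if $\alpha^n=\mathrm{id}$ then $S$ is free of rank $n$ over $R[\theta^n]$, hence embeds in $M_n(R[\theta^n])$ and is PI, so Kaplansky's theorem \cite[Theorem I.13.3]{BrownGoodearl} together with the proper non-zero ideal $\theta S$ rules out primitivity; and the observation that $\theta$ acts bijectively on any faithful simple module is sound. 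Neither of these, however, is the hard part.

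The first gap is in sufficiency. Your route --- start from $S/(\theta-a)S$ and ``invert the action of $a$'', i.e.\ pass to $S\mathcal{A}^{-1}=R\mathcal{A}^{-1}[a\theta;\alpha]$ where Lemma \ref{artinian}(b) does give a simple module --- never returns to $S$: a simple module over an Ore localisation need not be simple, nor even contain a simple submodule, as a module over the original ring, and maximal right ideals of $S\mathcal{A}^{-1}$ contract only to right ideals that are maximal among those with $\mathcal{A}$-torsion-free quotient. You flag this step as ``where care is needed'' but supply no argument, and this is exactly the difficulty the paper avoids by working entirely inside $S$: Proposition \ref{simplemodule} proves directly that $(1-a\theta)S$ is a maximal right ideal, using the congruence $1\equiv a\theta$ to drive a length-reduction argument, then the $\alpha$-special property to reach $(a\theta)^k\in J$ and descend to $1\in J$. (Indeed, when the paper itself later localises at $\mathcal{A}$, in Proposition \ref{old2.1}, it needs a dedicated transfer result, Lemma \ref{kenlem1}, to carry information back from $S\mathcal{A}^{-1}$ to $S$.) Your faithfulness step is also under-justified: showing that $\mathrm{Ann}_S(V)$ contains no non-zero $\alpha$-stable ideal of $R$ gives $\mathrm{Ann}_S(V)\cap R=0$, but to conclude $\mathrm{Ann}_S(V)=0$ one needs that every non-zero prime of $S$ contracting to $(0)$ in $R$ contains $\theta$ when $R$ is $\alpha$-prime and $|\alpha|=\infty$; that is \cite[Theorem 4.3]{Irving79}, not Lemma \ref{stable}, which only says that primes avoiding $\theta$ contract to $\alpha$-primes. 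The second gap is necessity: the extraction of an $\alpha$-special element from an arbitrary faithful simple module --- the heart of Leroy and Matczuk's theorem --- is not carried out at all. Your ``leading coefficient'' heuristic via $\mathrm{ann}_R(v\theta^{-k})=\alpha^k(\mathrm{ann}_R(v))$ is never turned into a candidate element $a$, and no argument is offered that any such $a$ satisfies either condition of Definition \ref{special}; you acknowledge this yourself as ``the main obstacle''. Since this is the direction the paper declines to reprove and cites from \cite{JurekLeroy}, nothing in the paper can be pointed to as filling it for you: as it stands, the proposal proves only one quarter of the theorem.
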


In the proof of Theorem \ref{JurekLeroy}, given a Noetherian PI ring $R$ and an automorphism $\alpha$ of $R$ with $\alpha$-special element $a$, the authors build a simple faithful module over $S=R[\theta; \alpha]$ of the form $S/M$, where $M$ is a maximal right ideal of $S$ containing $(1-a\theta)S$. Following similar ideas, we show in the proposition below that, when $R$ is commutative, $(1-a\theta)S$ is actually a maximal right ideal; this will be important for us in the sequel. Note also that the proposition provides a proof that the conditions on $\alpha$ in Theorem \ref{JurekLeroy} are sufficient for primitivity of $S$ when $R$ is commutative.

\begin{proposition}\label{simplemodule}
Let $R$ be a commutative Noetherian ring and let $\alpha\in Aut(R)$ be such that $R$ is $\alpha$-special with $a$ an $\alpha$-special element of $R$. Let $S=R[\theta;\alpha]$. Then $(1-a\theta)S$ is a maximal right ideal of $S$. If in  addition $\alpha$ has infinite order, then $V:=S/(1-a\theta)S$ is a faithful right $S$-module.
\end{proposition}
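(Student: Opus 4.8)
The plan is to prove the two assertions in turn, using the division algorithm by the monic-in-$\theta$ element $1 - a\theta$ (after adjusting for the fact that $a$ need not be a unit) together with the defining property of an $\alpha$-special element. First I would establish maximality of $J := (1-a\theta)S$. The strategy is to take any right ideal $J'$ with $J \subsetneq J' \subseteq S$, pick $w \in J' \setminus J$, and use it to produce a nonzero $\alpha$-stable ideal $I$ of $R$ contained in the ``trace'' of $J'$ on $R$; then condition (b) of Definition~\ref{special} forces some $N_n^\alpha(a) \in I$, and the key calculation should show that this pushes $1$ into $J'$, whence $J' = S$. Concretely, because $\theta r = \alpha(r)\theta$, one can rewrite $r\theta^k$ modulo $J$: the relation $1 \equiv a\theta \pmod J$ lets me replace each occurrence of $\theta$ acting on the left, so every element of $S$ is congruent mod $J$ to a left $R$-linear combination where the coefficient of $\theta^k$ gets multiplied by products $a\alpha(a)\cdots$, i.e.\ by the norms $N_n^\alpha(a)$. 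I expect to show that $S/J \cong R$ as right $R$-modules via $\theta^k \mapsto N_k^\alpha(a)$ (up to $\alpha$-twisting), so that the $S$-submodules of $S/J$ correspond to those $\alpha$-stable ideals of $R$ that contain some $N_n^\alpha(a)$ — and condition~(b) says the only nonzero such ideal is $R$ itself.

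The cleanest route to maximality is therefore to set up the correspondence explicitly. I would define the right $R$-module map $\phi : S \to R$ determined by the reduction $\theta \mapsto a$-weighted substitution (mirroring the computation in Lemma~\ref{artinian}(a), where $u - \theta$ played the role now taken by $1 - a\theta$), check that $\ker\phi$ relates to $J$, and identify the submodule lattice of $S/J$ with a sublattice of $\alpha$-stable ideals of $R$. A proper $S$-submodule of $S/J$ then corresponds to a proper $\alpha$-stable ideal $I \subsetneq R$; by Definition~\ref{special}(b), since $I$ is $\alpha$-stable we would need every $N_n^\alpha(a)$ to avoid being forced into $I$, but condition (b) guarantees that if $I \ne 0$ some $N_n^\alpha(a) \in I$, and combined with condition~(a) (no $N_n^\alpha(a)$ vanishes) this will collapse $I$ to $0$. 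Hence the only proper submodule of $S/J$ is $0$, giving maximality of $J$.

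For faithfulness of $V = S/J$, I would argue that $\mathrm{Ann}_S(V)$ is a two-sided ideal of $S$ contained in $J$, so it is contained in $J \cap$ (two-sided part); the point is to show this annihilator is $0$ when $|\alpha| = \infty$. The natural approach is: $\mathrm{Ann}_S(V)$ is the largest two-sided ideal contained in the right ideal $J$. Any nonzero ideal of $S = R[\theta;\alpha]$ with $R$ Noetherian meets $R$ in a nonzero $\alpha$-stable ideal, or else contains a power of $\theta$ times such data; using Lemma~\ref{stable} and the structure of ideals in skew polynomial rings, a nonzero two-sided ideal contained in $J$ would force a relation that, after applying the norm reduction above, contradicts either $\alpha$-speciality or the infinite order of $\alpha$. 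I would handle the two ways $\theta$ can interact — $\theta$ in the annihilator versus $\theta$ not in it — separately, invoking that $\alpha$ has infinite order precisely to rule out the eventual-periodicity that a nonzero annihilator would impose on the $\alpha$-orbit of the relevant ideal.

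The main obstacle I anticipate is the faithfulness half, specifically controlling $\mathrm{Ann}_S(V)$: unlike maximality, which is essentially the $\alpha$-special condition repackaged through the division algorithm, faithfulness requires ruling out \emph{every} nonzero two-sided ideal sitting inside $J$, and the interaction between the non-unit $a$, the twisting by $\alpha$, and the infinite-order hypothesis is delicate. In particular the norm-reduction identity must be tracked carefully because $a$ is not invertible, so I cannot simply ``divide by $1-a\theta$'' as a monic polynomial in the naive sense; I expect the technical heart of the argument to be a clean statement of how left-multiplication and the relation $1 \equiv a\theta \pmod J$ propagate through higher powers of $\theta$, and showing that $|\alpha| = \infty$ is exactly what prevents a nontrivial ideal from being stabilised inside $J$.
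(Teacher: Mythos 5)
Your opening paragraph does match the paper's actual strategy for maximality (given $J\subsetneq J'$, produce a nonzero $\alpha$-stable ideal inside $J'\cap R$, then use Definition \ref{special}(b) to push $1$ into $J'$), but the concrete implementation you commit to in your second paragraph fails. The ``cleanest route'' via a reduction map $\phi\colon S\to R$ mirroring Lemma \ref{artinian}(a) does not exist: that lemma rests on division by the \emph{monic} element $u-\theta$, whereas $1-a\theta$ has leading coefficient $a$, a non-unit, so $S\neq R+(1-a\theta)S$. In fact, writing $S=\bigoplus_k\theta^kR$, the relations $(1-a\theta)\theta^kr\in J$ give $\theta^kr\equiv\theta^{k+1}\alpha^{-(k+1)}(a)r \pmod J$, so $S/J$ is a strictly increasing union of copies of $R$ (strict precisely because $a$ is not a unit); such a module is not finitely generated over $R$, hence not isomorphic to $R$, and no map $\phi$ of the kind you describe can exist. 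Your claimed lattice correspondence is also refuted by the paper's own example: for $R=k[[X]]$, $\alpha(X)=qX$, $a=X$, the ideal $\langle X\rangle$ is a proper nonzero $\alpha$-stable ideal containing $N_1^{\alpha}(a)$, yet $S/(1-X\theta)S$ \emph{is} simple; note too that Definition \ref{special}(b) says every nonzero $\alpha$-stable ideal contains some norm, not that $R$ is the only such ideal, so your inference is a misreading of the definition. The paper instead argues directly: take $f=a_n\theta^n+\cdots+a_k\theta^k\in J'\setminus J$ of minimal length, use $1\equiv a\theta$ to shorten it (forcing $f=a_n\theta^n$), strip off powers of $\theta$ by right multiplication by $\alpha^{-n}(a)$ (via $a_n\theta^n\alpha^{-n}(a)=a\theta\,\alpha^{-1}(a_n)\theta^{n-1}\equiv\alpha^{-1}(a_n)\theta^{n-1}$) to get $J'\cap R\neq 0$, check that $J'\cap R$ is $\alpha$-stable, and then descend from $(a\theta)^k\in J'$ to $1\in J'$ using $(a\theta)^{k-1}=(1-a\theta)(a\theta)^{k-1}+(a\theta)^k$. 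None of these three computations, which are the substance of the proof, appears in your proposal.

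The faithfulness half is missing its engine entirely. The easy part is fine: $P:=\mathrm{Ann}_S(V)\subseteq(1-a\theta)S$, and since $a$ is regular a degree count gives $(1-a\theta)S\cap R=0$ and $\theta\notin(1-a\theta)S$, so $P\cap R=0$ and $\theta\notin P$. But the passage from there to $P=0$ is exactly where the difficulty lies, and your structural claim that a nonzero ideal of $S$ ``meets $R$ in a nonzero $\alpha$-stable ideal, or else contains a power of $\theta$ times such data'' is false as a general fact about skew polynomial rings: if $\alpha$ had finite order $n$, then $(1-\theta^n)S$ is a proper two-sided ideal meeting $R$ in $0$ and containing no power of $\theta$. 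The true statement in the needed generality is precisely the theorem of Irving that the paper cites (\cite[Theorem 4.3]{Irving79}): if $R$ is $\alpha$-prime (which $\alpha$-special implies) and $\alpha$ has infinite order, then every nonzero prime of $S$ contracting to $(0)$ in $R$ contains $\theta$; applied to the primitive (hence prime) ideal $P$ this gives $P=0$. Your appeal to ``eventual-periodicity'' gestures at why infinite order should matter, but it is not an argument, and without citing or reproving Irving's theorem the faithfulness assertion remains unproved.
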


\begin{proof}
First note that since $a$ is a regular element of $R$ by \cite[Lemma 1.2]{JurekLeroy}, $(1-a\theta)S \neq S$. Let $J$ be a right ideal of $S$  such that $M:=(1-a\theta)S\subset J$.
We claim that
\begin{equation}\label{hitR} J \cap R \quad \neq \quad (0).
\end{equation}
Let $n \geq k$ and let
$$f=a_n\theta^n+\ldots+a_k\theta^k\in J\backslash (1-a\theta )S,$$
with $f$ of shortest possible length $\ell(f) :=n-k$. Suppose that $\ell (f)>0$. Then, since $1\equiv a\theta \;(\mbox{mod } M)$, $$a_k\theta^k\equiv a\theta a_k\theta^k\equiv a\alpha(a_k)\theta^{k+1} \;(\mbox{mod } M).$$
Thus, $f\equiv g \;(\mbox{mod } M),$ where
$$g:=a_n\theta^n+\ldots + (a_{k+1}+a\alpha(a_k))\theta^{k+1},$$
so that $\ell (g) < \ell(f)$. This contradicts our choice of $f$, and so $\ell (f)=0$. That is, $0 \neq a_n\theta^n\in J$. But now $J$ also contains $\alpha^{-1}(a_n)\theta^{n-1}$, because
$$ a_n\theta^n\alpha^{-n}(a)= a a_n\theta^n = a\theta \alpha^{-1}(a_n)\theta^{n-1}\equiv \alpha^{-1}(a_n)\theta^{n-1}(\mbox{mod } M).$$
This implies that $\alpha^{-n}(a_n)\in J$, so (\ref{hitR}) holds.

Next, observe that $J\cap R$ is an $\alpha$-stable ideal of $R$, since, if $r\in J\cap R$, then
$$ \alpha^{-1}(r)\equiv a\theta\alpha^{-1}(r) =r\theta \alpha^{-1}(a) \;(\mbox{mod } M),$$
so that $\alpha^{-1}(r)\in J$.  Therefore, as $a$ is an $\alpha$-special element of $R$ and $J\cap R$ is non-zero and $\alpha$-stable, there exists $k\in \N$ such that $(a\theta)^k\in J$. But then $(a\theta)^{k-1}=(1-a\theta)(a\theta)^{k-1}+(a\theta)^k$ and so $(a\theta)^{k-1}\in J$. It follows that $1\in J$ and $(1-a\theta)S$ is maximal, as required.

To prove the last statement, assume that $\alpha$ is of infinite order. Let $P=\mathrm{Ann}_S(V)$. Then $P\subseteq (1-a\theta)S$, so $P\cap R= (0)$ and $P$ does not contain $\theta$. By \cite[Theorem 4.3]{Irving79} as $R$ is $\alpha$-prime and $\alpha$ has infinite order, every non-zero prime ideal of $S$ which intersects $R$ in $(0)$ contains $\theta$, so $P=(0)$.
\end{proof}

\begin{remark}\label{torfree}
For use in Section \ref{necsuff}, we record here a further property of the simple module $V$ defined in Proposition \ref{simplemodule}. Retain the notation of the proposition, and define $\mathcal{A}$ to be the multiplicative subsemigroup of $R \setminus \{0\}$ generated by $\{\alpha^i (a) : i \geq 0 \}$. Then $\mathcal{A}$ consists of regular elements of $S$, and is easily seen to form an Ore set in that ring. We claim that
$$ V  \textit{is an } \mathcal{A}\textit{-torsion free simple right } \mathit{S} \textit{-module.} $$

By  the proposition, $V$ is simple, so it is either torsion or torsion free. Since $a$ is a regular element of $S$, for all $h \in S\setminus \{0\}$, the degree in $\theta$ of $(1-a\theta)h$ is one more than the degree of $h$. Hence $(1-a\theta)S\cap R=0$. Therefore $1+(1-a\theta)S\in V$ is not $\mathcal A$-torsion, and so $V$ is ${\mathcal A}$-torsion free.
\end{remark}

\subsection{Reduction to the case where $R$ is a domain}\label{domain}

To achieve the reduction as in the title of the subsection we need the following lemma: 

\begin{lemma}\label{npower} Let $R$ be a commutative Noetherian ring, let $\alpha\in Aut(R)$ and set $S=R[\theta;\alpha]$. Suppose that $S=R[\theta;\alpha]$ is prime. Then:
\begin{enumerate}
\item $R$ is $\alpha$-prime, and so there exist $n \in \N$ and a prime ideal $Q$ of $R$ such that

$\{Q,\alpha(Q), \ldots , \alpha^{n-1}(Q)\}$ is the set of minimal primes of $R$, with $\cap_{i=0}^{n-1}\alpha^i(Q) = (0)$.

\item In the setting of (a), if $S$ is primitive then $(R/Q)[\theta^n; \alpha^n]$ is primitive.
\end{enumerate}
\end{lemma}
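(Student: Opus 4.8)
The plan is to derive (a) from Lemma~\ref{stable} together with the $\alpha$-primeness forced by primeness of $S$, and to derive (b) by reducing, through the primitivity criterion of Theorem~\ref{JurekLeroy}, to the two assertions that $\alpha^n$ has infinite order on $R/Q$ and that $R/Q$ is $\alpha^n$-special.

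\emph{Part (a).} Applying Lemma~\ref{stable}(b) with $I=(0)$, the zero ideal $(0)=(0)S$ of $S$ is prime precisely when $(0)$ is $\alpha$-prime in $R$; as $S$ is prime, this shows $R$ is $\alpha$-prime. I would then check that $R$ is reduced: its nilradical $N$ is $\alpha$-stable and, $R$ being Noetherian, nilpotent, so if $N\neq(0)$ and $N^k=(0)$ with $k$ minimal (forcing $k\geq 2$), then $N\cdot N^{k-1}\subseteq(0)$ with both factors $\alpha$-stable and nonzero, contradicting $\alpha$-primeness; hence $N=(0)$. Thus $(0)$ is the intersection of the finitely many minimal primes of $R$, which $\alpha$ permutes, and it remains to see that this permutation is a single cycle. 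If $O$ were a proper nonempty $\alpha$-invariant set of minimal primes, then $A:=\bigcap_{P\in O}P$ and $B:=\bigcap_{P\notin O}P$ would be $\alpha$-stable with $AB\subseteq A\cap B=(0)$, so $\alpha$-primeness forces, say, $A=(0)$; but then any $P'\notin O$ contains $A=(0)\supseteq\prod_{P\in O}P$, hence contains some $P\in O$, giving $P'=P$ by minimality, which is impossible. Writing $n$ for the orbit size and $Q$ for a member then gives the minimal primes as $\{Q,\alpha(Q),\dots,\alpha^{n-1}(Q)\}$ with $\bigcap_{i=0}^{n-1}\alpha^i(Q)=(0)$ and $\alpha^n(Q)=Q$.

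\emph{Part (b).} Since $\alpha^n(Q)=Q$, $\alpha^n$ induces an automorphism of the Noetherian domain $R/Q$, and I aim to apply Theorem~\ref{JurekLeroy} to the pair $(R/Q,\alpha^n)$. Primitivity of $S$, via the same theorem, supplies an $\alpha$-special element $a$ of $R$ and gives $|\alpha|=\infty$. To see that $\alpha^n$ has infinite order on $R/Q$, suppose $\alpha^{nm}$ acts as the identity modulo $Q$ for some $m\geq 1$; applying $\alpha^i$ to the congruences $\alpha^{nm}(r)\equiv r\pmod Q$ shows $\alpha^{nm}$ is the identity modulo each $\alpha^i(Q)$, hence modulo $\bigcap_i\alpha^i(Q)=(0)$, contradicting $|\alpha|=\infty$. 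For $\alpha^n$-speciality the natural candidate is the image $\bar b$ of $b:=N_n^\alpha(a)$, exploiting the identity $N_m^{\alpha^n}(b)=N_{nm}^\alpha(a)$. Condition~(i) of Definition~\ref{special} for $\bar b$ reads $N_{nm}^\alpha(a)\notin Q$ for all $m$, and holds because $a$ is regular (\cite[Lemma~1.2]{JurekLeroy}) and so lies in no minimal prime, whence each factor $\alpha^j(a)$ avoids $Q$ and so does their product.

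The main obstacle is condition~(ii) of Definition~\ref{special} for $\bar b$. Given a nonzero $\alpha^n$-stable ideal $\bar I$ of $R/Q$, its preimage $I_0$ properly contains $Q$ and is $\alpha^n$-stable but not $\alpha$-stable, so Definition~\ref{special}(ii) for $a$ cannot be invoked for $I_0$ directly. I would instead pass to the $\alpha$-stable ideal $P:=\prod_{i=0}^{n-1}\alpha^i(I_0)\subseteq I_0$ and argue that $P\neq(0)$: otherwise some $\alpha^{i_0}(I_0)\subseteq Q$, i.e.\ $I_0\subseteq\alpha^{-i_0}(Q)$, which together with $Q\subsetneq I_0$ and incomparability of distinct minimal primes forces $I_0\subseteq Q$, a contradiction. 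Then $\alpha$-speciality of $a$ yields $m'$ with $N_{m'}^\alpha(a)\in P\subseteq I_0$, and since $I_0$ is an ideal, $N_{nm}^\alpha(a)\in I_0$ whenever $nm\geq m'$; choosing such an $m$ gives $N_m^{\alpha^n}(\bar b)\in\bar I$. With both conditions verified and $\alpha^n$ of infinite order, Theorem~\ref{JurekLeroy} applied to $(R/Q,\alpha^n)$ yields that $(R/Q)[\theta^n;\alpha^n]$ is primitive. The delicate points are the nonvanishing of $P$ and the index bookkeeping converting $\alpha$-norms into $\alpha^n$-norms.
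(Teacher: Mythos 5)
Your proof is correct, and for part (b) it takes a genuinely different route from the paper's. For (a) the paper simply quotes Lemma \ref{stable}(c),(d), whereas you re-prove the needed facts from scratch (reducedness of $R$ from $\alpha$-primeness, then transitivity of the $\alpha$-action on the minimal primes); this is sound and self-contained. For (b) the paper argues module-theoretically: it takes a faithful simple right $S$-module $V$, restricts to the subring $T=R[\theta^n;\alpha^n]$, over which $S$ is a finite normalizing extension, invokes Formanek--Jategaonkar \cite[Theorem 4]{FormanekJategaonkar} to get a finite $T$-composition series of $V$, and notes that the product of the affiliated primes annihilates the faithful module $V$, hence is zero, so some affiliated prime is contained in, and therefore equals, the minimal prime $QT$; the corresponding composition factor is then a faithful simple $T/QT$-module. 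You instead stay entirely inside the Leroy--Matczuk criterion (Theorem \ref{JurekLeroy}), using it in both directions: primitivity of $S$ gives an $\alpha$-special element $a$ and $|\alpha|=\infty$, and you transfer these data to $(R/Q,\alpha^n)$ by showing that $\overline{\alpha^n}$ has infinite order and that the image of $b=N_n^{\alpha}(a)$ is $\alpha^n$-special in $R/Q$. The key steps there --- the norm identity $N_m^{\alpha^n}(b)=N_{nm}^{\alpha}(a)$, regularity of $a$ (via \cite[Lemma 1.2]{JurekLeroy}, the same citation the paper uses in Proposition \ref{simplemodule}) to keep norms out of $Q$, and the nonvanishing of the $\alpha$-stable ideal $\prod_{i=0}^{n-1}\alpha^i(I_0)$ via incomparability of minimal primes --- all check out; the only quibble is that your parenthetical ``but not $\alpha$-stable'' should read ``not necessarily $\alpha$-stable'', which does not affect the argument. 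The paper's route is shorter and produces the faithful simple $T/QT$-module directly; yours buys explicitness at the level of the criterion, exhibiting a concrete $\alpha^n$-special element of $R/Q$ (hence a concrete Ore set in the spirit of Remark \ref{torfree}), and is closer to the original proof of \cite[Corollary 2.2]{JurekLeroy}, which the authors themselves note proceeds ``with a different proof'' from theirs.
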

\begin{proof}(a) This is clear from Lemma \ref{stable}(c),(d).

(b) With the notation of (a), let $T := R[\theta^n ; \alpha^n]$. Notice that $QT$ and its $\alpha$-conjugates are the minimal prime ideals of $T$. Suppose that $V$ is a faithful simple right  $S$-module. By \cite[Theorem 4]{FormanekJategaonkar} $V$ has a (finite) composition series as a $T$-module,
$$ 0 = V_0 \subset V_1 \subset \cdots \subset V_t = V.$$
Set $P_i := \mathrm{Ann}_T(V_i/V_{i-1})$, for $i = 1, \ldots , t$, so that
$$ V(P_t P_{t-1} \ldots P_1) = 0.$$
Since $V$ is by hypothesis a faithful $S$-module, $ P_tP_{t-1}\ldots P_1 = (0)$. Hence there exists $j$, $1 \leq j \leq t$, with $P_j \subseteq QT$, and the minimality of the prime ideal $QT$ of $T$ ensures that $P_j = QT$.

That is, $V_j/V_{j-1}$ is a faithful simple $T/QT$-module, as required.
\end{proof}

The above lemma, with an equivalence in (b), can be found as \cite[Corollary 2.2]{JurekLeroy} with a different proof.

\begin{lemma}\label{power}
Let $R$ be a commutative Noetherian ring, $\alpha$ an automorphism of $R$, and $S=R[\theta; \alpha]$.
\begin{enumerate}
\item Let $n \geq 1$ and let $T=R[\theta^n; \alpha^n]$. Then $S$ satisfies $(\diamond)$ if and only if  $T$ satisfies $(\diamond)$ .
\item Suppose that $R$ is $\alpha$-prime, with $Q$ and $n$ chosen as in Lemma \ref{npower}(a). If $S$ satisfies $(\diamond)$ so does $T/QT\simeq (R/Q)[\theta^n; \alpha^n]$.
\end{enumerate}
\end{lemma}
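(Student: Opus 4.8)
The plan is to exploit the fact that $S = R[\theta;\alpha]$ is a finite normalizing extension of $T = R[\theta^n;\alpha^n]$, which reduces both parts to direct applications of Proposition~\ref{reduction}. Viewing $T$ as a subring of $S$, any power $\theta^j$ factors as $(\theta^n)^k\theta^i$ with $0 \le i < n$, so $S = \sum_{i=0}^{n-1}\theta^i T$; and since $\theta^i r = \alpha^i(r)\theta^i$ for $r \in R$ with $\alpha^i$ bijective on $R$, one checks that $\theta^i T = T\theta^i$. Thus $S$ is a finite normalizing extension of $T$ with normalizing generators $\{1,\theta,\dots,\theta^{n-1}\}$. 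Comparing $\theta$-degrees modulo $n$, the same expansion also gives $S = \bigoplus_{i=0}^{n-1} T\theta^i$ as a \emph{left} $T$-module, so that $T = T\theta^0$ is a direct summand of $S$ as a left $T$-module; and $T$ is Noetherian because $R$ is.

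For part (a): if $T$ satisfies $(\diamond)$ then so does $S$ by Proposition~\ref{reduction}(a). Conversely, the structural facts just recorded are precisely the hypotheses of Proposition~\ref{reduction}(b) --- namely $S$ a finite normalizing extension of the Noetherian ring $T$, with $T$ a left-$T$-module direct summand of $S$ --- so $S$ satisfying $(\diamond)$ forces $T$ to satisfy $(\diamond)$. This settles (a) in both directions.

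For part (b): I first apply (a) to deduce from the hypothesis that $T = R[\theta^n;\alpha^n]$ satisfies $(\diamond)$. I then record that $Q$ is $\alpha^n$-stable: by Lemma~\ref{npower}(a) the minimal primes of $R$ form the single $\alpha$-orbit $\{Q,\alpha(Q),\dots,\alpha^{n-1}(Q)\}$, and applying the automorphism $\alpha$ to this (finite, $\alpha$-invariant) set and comparing forces $\alpha^n(Q) = Q$. Hence, by Lemma~\ref{stable}(b) applied to the ring $T$ with coefficient automorphism $\alpha^n$ and $\alpha^n$-stable ideal $Q$, the set $QT$ is an ideal of $T$ and $T/QT \cong (R/Q)[\theta^n;\alpha^n]$. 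Since $T$ satisfies $(\diamond)$, Proposition~\ref{reduction}(c) applied to the ideal $QT$ of $T$ yields that $T/QT$ satisfies $(\diamond)$, as required.

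I expect the only genuinely delicate point to be the bookkeeping establishing the normalizing-extension and left-module-summand structure cleanly --- in particular checking $\theta^i T = T\theta^i$ and confirming that $S = \bigoplus_{i=0}^{n-1} T\theta^i$ is a left $T$-module direct sum --- since everything downstream is a direct citation of Proposition~\ref{reduction}. The subsidiary point in (b) that $\alpha^n(Q)=Q$, so that $QT$ is honestly an ideal of $T$ with the stated factor, must also be confirmed, but this is immediate from the orbit structure supplied by Lemma~\ref{npower}(a).
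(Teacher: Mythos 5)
Your proof is correct and takes essentially the same route as the paper: the paper's proof of (a) likewise invokes Proposition \ref{reduction}, observing that $S$ is a free $T$-module on the normalizing elements $1,\theta,\dots,\theta^{n-1}$, and deduces (b) from (a) together with Proposition \ref{reduction}(c). You simply make explicit the bookkeeping (the normalizing-extension structure, the left $T$-module summand property, and the $\alpha^n$-stability of $Q$) that the paper leaves to the reader.
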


\begin{proof} (a) This follows immediately from Proposition \ref{reduction}, since $S$ as a $T$-module is free with a basis formed by the normal elements $1, \theta, \theta^2, \ldots, \theta^{n-1}.$

(b) Clear from (a) and Proposition \ref{reduction}(c).
\end{proof}

\subsection{$(\diamond)$ when $R$ is an $\alpha$-simple domain}

To investigate which primitive skew polynomial rings over a commutative Noetherian domain $R$ satisfy $(\diamond)$, we first consider the case when $R$ is $\alpha$-simple. We preface the proposition with three lemmas needed for its proof.

\begin{lemma} \label{orbital} Let $\alpha$ be an automorphism of the commutative ring $R$, and suppose that the $\alpha$-orbit of every non-zero prime ideal of $R$ is infinite. Let $\mathcal{P} = \{P_1, \ldots , P_t \}$ be a finite set of non-zero prime ideals of $R$. Then there exists a positive integer $n$ such that, for all $i = 1, \ldots , t$ and for all $j \in \mathbb{Z} \setminus \{0\}$,
$$ \alpha^{jn} (P_i) \notin \mathcal{P}. $$
\end{lemma}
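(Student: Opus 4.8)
The plan is to recast the statement as a combinatorial fact about the action of the cyclic group $\langle\alpha\rangle$ on the finite set $\mathcal{P}$, and then close with an elementary estimate on exponents. The guiding observation is that the infinite-orbit hypothesis severely restricts how often a power of $\alpha$ can carry one prime in $\mathcal{P}$ onto another.

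First I would extract the key consequence of the hypothesis. For a non-zero prime ideal $P$ of $R$, the set $\mathrm{Stab}(P):=\{d\in\Z:\alpha^d(P)=P\}$ is a subgroup of $\Z$, and the $\alpha$-orbit of $P$ is in bijection with $\Z/\mathrm{Stab}(P)$. Since by assumption this orbit is infinite, and the only infinite-index subgroup of $\Z$ is $\{0\}$, we get $\mathrm{Stab}(P)=\{0\}$ for every non-zero prime $P$, and in particular for each $P_l\in\mathcal{P}$. Next, for each ordered pair $(i,l)$ with $1\le i,l\le t$ I would consider
$$ E_{i,l}:=\{m\in\Z:\alpha^m(P_i)=P_l\}, $$
and observe that $|E_{i,l}|\le 1$: if $m,m'\in E_{i,l}$ then $\alpha^{m-m'}(P_l)=P_l$, forcing $m-m'\in\mathrm{Stab}(P_l)=\{0\}$. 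Consequently the exponent set
$$ \mathcal{B}:=\{m\in\Z:\alpha^m(P_i)\in\mathcal{P}\text{ for some }i\}=\bigcup_{i,l}E_{i,l} $$
is finite, with at most $t^2$ elements.

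To finish, I would set $N:=\max\{|m|:m\in\mathcal{B}\}$, which is finite and satisfies $N\ge 0$ because $0\in E_{i,i}\subseteq\mathcal{B}$, and then take $n:=N+1$. For any $j\in\Z\setminus\{0\}$ we have $|jn|\ge n=N+1>N$, so $jn\notin\mathcal{B}$; by the definition of $\mathcal{B}$ this says precisely that $\alpha^{jn}(P_i)\notin\mathcal{P}$ for every $i=1,\ldots,t$, which is the desired conclusion.

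The one step carrying the real content—the \emph{main obstacle}, such as it is—is the passage from the infinite-orbit hypothesis to the finiteness of $\mathcal{B}$, which hinges on recognising that an infinite orbit forces a trivial stabiliser and hence at most one exponent realising each collision $\alpha^m(P_i)=P_l$. Once that is in hand, the choice of $n$ larger than every exponent in $\mathcal{B}$ is a one-line size argument, and no properties of $R$ beyond the stated hypothesis are needed.
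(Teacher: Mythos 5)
Your proof is correct and takes essentially the same approach as the paper's: both arguments rest on the fact that infinite orbits force trivial stabilisers, so only finitely many exponents $m$ can satisfy $\alpha^{m}(P_i)\in\mathcal{P}$, and one then picks $n$ so that its nonzero multiples avoid all such exponents. The only cosmetic difference is that the paper chooses, for each $i$, an integer $n_i$ whose nonzero multiples produce no collisions for $P_i$ and sets $n=\prod_i n_i$, whereas you take $n$ to be one more than the largest collision exponent in absolute value; your write-up also supplies the stabiliser argument that the paper leaves implicit.
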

\begin{proof} For each $i$, there is a positive integer $n_i$ such that $\{\alpha^{n_it}(P_i) : t \in \Z \} \cap \mathcal{P} = \{P_i\}$. Thus $n := \prod_i n_i$ is as required.
\end{proof}

\begin{lemma} \label{notartin} Let $\alpha$ be an automorphism of the commutative ring $R$, let $S = R[\theta; \alpha]$, and let $\rho$ be a non-zero non-unit of $R$. Then $S/\rho S$ is not an Artinian right $S$-module.
\end{lemma}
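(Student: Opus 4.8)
The plan is to prove that $S/\rho S$ is not Artinian by exhibiting an explicit infinite, strictly descending chain of right $S$-submodules. Since every right ideal of $S$ containing $\rho S$ maps to a submodule of $S/\rho S$, it suffices to find an infinite properly descending chain of right ideals of $S$, all containing $\rho S$.

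First I would record the coefficientwise structure of two families of right ideals. Writing each element of $S$ uniquely as $\sum_i r_i \theta^i$ with $r_i \in R$ (using that $S$ is free as a left $R$-module on the powers of $\theta$), and using the commutativity of $R$, one sees that $\rho S$ consists exactly of those polynomials all of whose coefficients lie in the ideal $\rho R$ of $R$. On the other hand, since $\theta^n \cdot \sum_i r_i\theta^i = \sum_i \alpha^n(r_i)\theta^{n+i}$ and $\alpha^n$ is an automorphism, the right ideal $\theta^n S$ is precisely the set of polynomials supported in degrees $\geq n$, with arbitrary coefficients in those degrees. Consequently the right ideal $I_n := \theta^n S + \rho S$ is the set of polynomials $\sum_i c_i\theta^i$ whose coefficients $c_i$ lie in $\rho R$ for $i < n$ and are arbitrary in $R$ for $i \geq n$. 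These satisfy $S = I_0 \supseteq I_1 \supseteq I_2 \supseteq \cdots$, and each contains $\rho S$.

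The key step is strictness: $I_n \supsetneq I_{n+1}$ for every $n \in \N$. Indeed $\theta^n \in I_n$, whereas $\theta^n \in I_{n+1}$ would force its degree-$n$ coefficient $1$ to lie in $\rho R$, i.e. $\rho$ to be a unit of $R$, contrary to hypothesis. Hence the images $I_n/\rho S$ form an infinite strictly descending chain of submodules of $S/\rho S$, so $S/\rho S$ is not Artinian. The only mildly delicate point is the precise coefficientwise description of $I_n = \theta^n S + \rho S$; this is where the twist $\theta r = \alpha(r)\theta$ must be handled correctly, but it reduces to the observations that $\alpha^n$ is surjective and that $R$ is commutative, after which strictness is immediate from $\rho$ being a non-unit (note that the hypothesis $\rho \neq 0$ is not even needed for this argument).
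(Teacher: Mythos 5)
Your proposal is correct and follows essentially the same route as the paper: both exhibit the chain $\theta^n S + \rho S \supseteq \theta^{n+1}S + \rho S \supseteq \cdots$ of right ideals containing $\rho S$ and show strictness by reducing membership of $\theta^n$ in the next term to the impossible conclusion $1 \in \rho R$. Your explicit coefficientwise description of $\theta^n S + \rho S$ is just a more detailed packaging of the degree argument the paper carries out by cancelling $\theta^m$ and passing modulo $S\theta$.
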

\begin{proof} We claim that the chain
$$ \theta S + \rho S \supseteq \theta^2 S + \rho S \supseteq \cdots \supseteq \theta^m S + \rho S \supseteq \cdots, $$
of right ideals of $S$ is strictly descending.
In order to prove this, suppose that $m \geq 0$ and $\theta^m \in \theta^{m+1} S + \rho S=S \theta^{m+1} + \rho S,$ say
$\theta^m = p\theta^{m+1} + \rho q$
 for $p,q \in S.$ Then $\rho q\in \rho S \theta^{m}$. This implies that $1\in S\theta + \rho S$, contradicting the fact that $\rho$ is not invertible.
\end{proof}

\begin{lemma}\label{form} Let $\alpha$ be an automorphism of the ring $R$, $S = R[\theta; \alpha]$, and let $\rho$ be a regular non-unit of $R$. Let $\{r_{\lambda} : \lambda \in \Lambda \}$ be a set of coset representatives  for $\rho R$ in $R$. Then every element $m$ of $M := S/\rho(1 - \theta )S$ has a unique expression
$$ m \; = \; \sum_{i=\ell}^k r_{\lambda_i}\theta^i + \rho b + \rho(1 - \theta)S, $$
where $b \in R$ and either (i) the part of the expression under the summation symbol is $0$, or else (ii) $\ell \leq k \in \mathbb{Z}_{\geq 0}$ with $r_{\lambda_{\ell}}, r_{\lambda_k} \neq 0$.
\end{lemma}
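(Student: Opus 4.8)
The plan is to produce the claimed expression as a \emph{normal form} for the elements of $M$, obtained by a two-stage reduction modulo the right ideal $N := \rho(1-\theta)S$, and then to establish uniqueness by a degree argument. I begin by recording the elementary reduction moves. Writing a general element of $S$ in the left-$R$-basis $\{\theta^i : i \geq 0\}$ and using $\theta r = \alpha(r)\theta$, a direct computation gives, for $g = \sum_{i=0}^n g_i \theta^i$ with $g_n \neq 0$,
\[
\rho(1-\theta)g \;=\; \rho g_0 \;+\; \sum_{i=1}^{n}\rho\big(g_i - \alpha(g_{i-1})\big)\theta^i \;-\; \rho\alpha(g_n)\theta^{n+1}.
\]
Two consequences are immediate: (1) every coefficient of every element of $N$ lies in the right ideal $\rho R$; and (2) for all $s \in R$ and $j \geq 1$ one has $\rho s\theta^j \equiv \rho\alpha^{-1}(s)\theta^{j-1} \pmod{N}$, since $\rho s\theta^j - \rho\alpha^{-1}(s)\theta^{j-1} = -\rho(1-\theta)\alpha^{-1}(s)\theta^{j-1} \in N$.

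For existence, I start from an arbitrary $f = \sum_i f_i\theta^i \in S$. Reducing each coefficient modulo $\rho R$, I write $f_i = r_{\lambda_i} + \rho c_i$, where $r_{\lambda_i}$ is the chosen coset representative and $c_i \in R$ is unique by regularity of $\rho$; thus $f = \sum_i r_{\lambda_i}\theta^i + \rho d$ with $d = \sum_i c_i\theta^i \in S$. Applying move (2) repeatedly collapses $\rho d = \sum_j \rho c_j\theta^j$ down to degree $0$, giving $\rho d \equiv \rho b \pmod{N}$ with $b = \sum_j \alpha^{-j}(c_j) \in R$. Deleting the coefficients $r_{\lambda_i}$ that vanish and relabelling the extreme surviving indices as $\ell \leq k$ yields the stated shape, with case (i) occurring exactly when all the $r_{\lambda_i}$ are zero.

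The substance is uniqueness, and the key point is the degree-raising identity visible in the displayed formula: if $g \neq 0$ then $\rho(1-\theta)g$ has degree $\deg g + 1$ with leading coefficient $-\rho\alpha(g_n)$, which is nonzero because $\alpha$ is an automorphism and $\rho$ is (left) regular. Suppose two normal forms agree modulo $N$; their difference $D = \sum_i d_i\theta^i$ lies in $N$, where $d_i = r_{\lambda_i} - r_{\mu_i}$ is a difference of coset representatives for $i \geq 1$, and $d_0 = (r_{\lambda_0} - r_{\mu_0}) + \rho(b-b')$. Write $D = \rho(1-\theta)g$. If $g \neq 0$, the top coefficient of $D$, sitting in degree $\deg g + 1 \geq 1$, is a difference of two coset representatives that also lies in $\rho R \setminus \{0\}$; but distinct representatives lie in distinct cosets, so such a difference cannot be a nonzero element of $\rho R$, a contradiction. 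Hence $g = 0$ and $D = 0$. Comparing coefficients gives $r_{\lambda_i} = r_{\mu_i}$ for $i \geq 1$; and in degree $0$, $r_{\lambda_0} - r_{\mu_0} = -\rho(b-b') \in \rho R$ forces $r_{\lambda_0} = r_{\mu_0}$ and then $b = b'$ by regularity. This is exactly the asserted uniqueness.

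The step I expect to be delicate is precisely this forced use of \emph{fixed} coset representatives in the positive-degree coefficients. Because move (2) trades a degree-$j$ term for a degree-$(j-1)$ term, altering a positive-degree coefficient within its coset silently changes the degree-$0$ contribution; so only the rigid choice of representatives makes the pair consisting of the tail $\sum_{i\geq 1} r_{\lambda_i}\theta^i$ and the scalar $b$ well defined. The degree-raising identity is what disentangles these two pieces, and is the crux of the argument.
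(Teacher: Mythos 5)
Your proof is correct, and the tacit background assumption it needs is exactly the one the paper's statement itself needs; but your route is genuinely different from the paper's, which disposes of the lemma in two lines of module theory. The paper reads the normal form off the exact sequence $0 \to \rho S/\rho(1-\theta)S \to M \to S/\rho S \to 0$: the submodule is identified via the isomorphism $S \cong \rho S$ (left multiplication by the regular element $\rho$) composed with the right $R$-module splitting $S = R \oplus (1-\theta)S$, which produces the representatives $\{\rho b : b \in R\}$ for $\rho S/\rho(1-\theta)S$; the quotient is identified as $S/\rho S \cong R/\rho R \otimes_R S$, whose elements are uniquely sums $\sum_i \overline{r_i}\,\theta^i$ with coefficients in $R/\rho R$, which produces the coset-representative part; existence and uniqueness are both absorbed into these isomorphisms. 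You instead argue coefficientwise: an explicit reduction algorithm (reduce each coefficient modulo $\rho R$, then collapse the $\rho$-multiples to degree $0$ via $\rho s\theta^j \equiv \rho\alpha^{-1}(s)\theta^{j-1}$) gives existence, and the degree-raising identity --- $\rho(1-\theta)g$ has degree $\deg g + 1$ with leading coefficient $-\rho\alpha(g_n) \neq 0$ --- gives uniqueness. That identity is precisely the computational shadow of the paper's two isomorphisms, so the substance runs parallel; what your version buys is that the uniqueness mechanism, which the paper dismisses as ``clear'', is made fully explicit, and one sees exactly where the regularity of $\rho$ and the rigidity of the fixed representatives are used. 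What the paper's version buys is brevity and a formulation that needs no computation at all. One shared caveat, which is a defect of the statement rather than of either proof: both your uniqueness step (in the case where a top-degree term is present in one expression and absent in the other) and the statement itself tacitly require the coset $\rho R$ to be represented by $0$; otherwise, writing $r_0 = \rho c \neq 0$ for its chosen representative, the element $\rho c\,\theta + \rho(1-\theta)S$ admits the two legitimate expressions $r_0\theta + \rho\cdot 0$ and $\rho\alpha^{-1}(c)$, and uniqueness fails. The paper's proof glosses over this point in exactly the same way, so it should not be counted against you.
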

\begin{proof} Since $S = R \oplus (1 - \theta)S$ as right $R$-modules and $S \cong \rho S$ as right $S$-modules, the desired expression using coset representatives $\{ \rho b : b \in R \}$ for the elements of the submodule $\rho S/\rho (1 - \theta)S$ of $M$ is clear. Representation of elements of $M$ by the listed elements follows, since $S/\rho S \cong R/\rho R \otimes_R S$ as right $S$-modules.
\end{proof}

\begin{proposition}\label{simplenotdiamond1} Let $R$ be a commutative Noetherian domain which is not a field, $\alpha$ an automorphism of $R$ and suppose that $R$ is $\alpha$-simple. Then $S= R[\theta;\alpha]$ does not satisfy $(\diamond)$. Moreover, there is $n\in \N$ such that $S/(1-\theta^n)S$ is a finite length $S$-module whose injective hull is not locally Artinian.
\end{proposition}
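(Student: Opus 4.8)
The plan is to prove the stronger ``moreover'' assertion, from which the failure of $(\diamond)$ is immediate. For a suitable $n$ I will exhibit a finite length module $N := S/(1-\theta^n)S$ together with a \emph{cyclic, non-Artinian} essential extension $M$ of (a copy of) $N$. Essentiality gives an embedding $M \hookrightarrow E_S(N)$, so $E_S(N)$ contains a finitely generated non-Artinian submodule and is therefore not locally Artinian; since $N$ has finite length, $E_S(N)=E_S(\mathrm{soc}\,N)$, and hence one of the finitely many simple submodules of $N$ must have non-locally-Artinian injective hull, so $S$ fails $(\diamond)$.

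\emph{Preliminaries.} Because $R$ is an $\alpha$-simple domain that is not a field, I first record two facts. (1) \emph{Every nonzero prime of $R$ has infinite $\alpha$-orbit}: a finite orbit of a nonzero prime $P$ would make $\bigcap_i \alpha^i(P)$ a proper $\alpha$-stable ideal, nonzero since $R$ is a domain, contradicting $\alpha$-simplicity. (2) \emph{$R$ is $\alpha^n$-simple for every $n\ge 1$}: if $I\ne 0$ is $\alpha^n$-stable then $\bigcap_{i=0}^{n-1}\alpha^i(I)$ is $\alpha$-stable and nonzero (domain), hence equals $R$, forcing $I=R$. Fix a nonzero nonunit $\rho\in R$, and let $\mathcal P$ be the (finite) set of primes minimal over $\rho$. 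Using fact (1), apply Lemma \ref{orbital} to $\mathcal P$ to obtain an integer $n$ with $\alpha^{jn}(P)\notin\mathcal P$ for all $P\in\mathcal P$ and all $j\ne 0$.

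\emph{Finite length and the extension.} Work over $T:=R[\theta^n;\alpha^n]=R[\phi;\beta]$ with $\phi=\theta^n$, $\beta=\alpha^n$, over which $S$ is free of rank $n$ on $1,\theta,\dots,\theta^{n-1}$. Since $\phi$ commutes with each $\theta^i$, one gets $S/(1-\theta^n)S\cong\bigoplus_{i=0}^{n-1}\theta^i\bigl(T/(1-\phi)T\bigr)$ as right $T$-modules; by Lemma \ref{artinian}(b) applied to $T$ together with fact (2), each summand $T/(1-\phi)T$ is a simple $T$-module, so $N=S/(1-\theta^n)S$ has finite length over $T$ and hence, by \cite{FormanekJategaonkar}, finite length over $S$. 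Now set $M:=S/\rho(1-\theta^n)S$. Left multiplication by the regular element $\rho$ gives a right $S$-module isomorphism $S\cong\rho S$ carrying $(1-\theta^n)S$ to $\rho(1-\theta^n)S$, so $N':=\rho S/\rho(1-\theta^n)S\cong N$ is a submodule of $M$ with $M/N'\cong S/\rho S$. By Lemma \ref{notartin} the quotient $S/\rho S$ is not Artinian, so $M$ is not Artinian.

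\emph{The crux: essentiality.} It remains to show that $N'$ is essential in $M$. Given $0\ne\bar w\in M$ with nonzero image in $M/N'\cong S/\rho S$, I would, using the evident $\theta^n$-analogue of the normal form of Lemma \ref{form} (proved identically, or read off the $T$-summand decomposition of $M$ above), write that image as $\sum_i \bar r_i\theta^i$ and then right-multiply by a single element $r\in R$ chosen to annihilate it in $S/\rho S$ while keeping $\bar w$ out of $\rho(1-\theta^n)S$; such an $r$ is produced as a nonzero element of $\bigcap_i\alpha^{-i}\bigl((\rho R:r_i)\bigr)$, which is nonzero because $R$ is a domain and each colon ideal contains $\rho R$. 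This lands $\bar w r$ in $N'\setminus\{0\}$, giving essentiality. \textbf{The main obstacle} is exactly this step: one must rule out a nonzero submodule of $M$ meeting $N'$ trivially, i.e. embedding into $S/\rho S$; such a submodule would be built from \emph{other} points of the $\alpha$-orbits of the primes in $\mathcal P$, and it is precisely the separation $\alpha^{jn}(P)\notin\mathcal P$ secured by Lemma \ref{orbital} that prevents the colon-ideal construction from colliding with these orbit points and so forces the reduction to terminate inside $N'$. Checking that this separation yields genuine essentiality, rather than merely the containment $N'\subseteq\mathrm{soc}(M)$, is the delicate point; once it is in hand, the conclusion that $E_S\bigl(S/(1-\theta^n)S\bigr)$ is not locally Artinian, and hence that $(\diamond)$ fails, follows as described above, with everything else amounting to bookkeeping with Lemmas \ref{artinian}, \ref{notartin} and \ref{form}.
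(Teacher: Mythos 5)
Your overall frame is the same as the paper's: fix a non-zero non-unit $\rho$, use Lemma \ref{orbital} to pass to a power $\theta^n$ separating the relevant prime orbits, observe via Lemma \ref{artinian} that $N=S/(1-\theta^n)S$ has finite length, and exhibit the cyclic non-Artinian module $M=S/\rho(1-\theta^n)S$ containing $N'=\rho S/\rho(1-\theta^n)S\cong N$ with $M/N'\cong S/\rho S$ non-Artinian by Lemma \ref{notartin}. All of that bookkeeping is correct (and your untwisted $T$-module decomposition of $N$ is a legitimate variant of the paper's twisted decomposition $\bigoplus_i V^{\alpha^i}$). But the proof has a genuine gap, and you have located it yourself: the essentiality of $N'$ in $M$ is never proved. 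This is not a ``delicate point'' to be checked at the end; it is the mathematical core of the proposition, and in the paper it occupies the bulk of the argument (the two sublemmas in Case 1 of the proof of Proposition \ref{simplenotdiamond1}).

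Moreover, the mechanism you sketch for it does not work as stated. Choosing $0\neq r\in\bigcap_i\alpha^{-i}\bigl((\rho R:r_i)\bigr)$ guarantees only that $\bar w r$ lands \emph{in} $N'$; it gives no control whatsoever over whether $\bar w r\neq 0$. Indeed $wr=\rho c$ for some $c\in S$, and $\bar w r=0$ exactly when $c\in(1-\theta^n)S$, a condition that a badly chosen $r$ in your colon-ideal intersection can perfectly well satisfy once the length of $w$ exceeds $1$ (the set of such bad $r$ is an ideal, and nothing in your setup shows it fails to contain the colon intersection). This is precisely why the paper does \emph{not} attempt a one-shot multiplication: its Sublemma 1 performs a descending induction on the number of coefficients outside $\rho R$, at each stage multiplying by elements chosen via \emph{maximal annihilator ideals} (associated primes) of cyclic submodules of $R/\rho R$, comparing heights, and invoking the orbit-separation hypothesis to produce an element of strictly smaller but still positive length; only at length $1$ (Sublemma 2) does an explicit multiplier, $u=\alpha^{-i}(\rho)$, finish, with non-vanishing certified by Lemma \ref{artinian}(a). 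A related defect in your setup: you take $\mathcal P$ to be the primes \emph{minimal} over $\rho R$, but the inductive argument needs orbit-separation for \emph{all} annihilator (associated) primes of $R/\rho R$, including embedded ones — this is why the paper defines $\mathcal P$ that way. As it stands, your proposal reduces the proposition to exactly the statement that still has to be proved.
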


\begin{proof} Let $\rho\in R$ be a non-zero non-unit of $R$, and let $\mathcal{P} = \{P_1, \ldots , P_t \}$ be the set of annihilator primes in $R$ of the $R$-module $R/\rho R$. Thus $\rho R \subseteq P_i$ for all $i = 1, \ldots , t$, and the inverse image in $R$ of every minimal prime of $R/\rho R$ is in $\mathcal{P}$, but there may also be further primes in $\mathcal{P}$. We divide the proof in two cases.

\medskip

\noindent{\bf Case 1.} \begin{equation}\label{hijak1} \alpha^j (P_i) \notin \mathcal{P} \textit{ for all } j \in \Z\setminus \{0\}, 1 \leq i \leq t.
\end{equation}
\medskip

Since $R$ is commutative and $\alpha$-simple, $S/(1-\theta)S$ is a simple $S$-module by Lemma \ref{artinian}(b). That is, $V := \rho S/\rho(1-\theta)S$ is a simple submodule of $M:=S/\rho(1-\theta)S$. By Lemma \ref{notartin}, $M$ is not an Artinian $S$-module. We shall prove  that
\begin{equation} \label{crux} V \textit{ is an essential submodule of } M; \end{equation}
that is, the thesis holds for $n=1$ in this case.

Let $p$ be a non-zero element of $M$ and apply  Lemma \ref{form} to write
\begin{equation}\label{pform} p=\sum_{i=l}^{k} r_{\lambda_i}\theta^i+\rho b+\rho (1-\theta)S
\end{equation}
for unique $b\in R$ and coset representatives $r_{\lambda_i}$ for $\rho R$ in $R$. Here, as in Lemma \ref{form}, either the summation part of the expression for $p$ is $0$, so that $p \in V$, or we fix $\ell$ and $k$ so that $r_{\lambda_{\ell}}$ and $r_{\lambda_k}$ are non-zero. In the former case we define the \emph{length} $\ell (p)$ of $p$ to be $0$, and in the latter case we define the number of $i$ for which $r_{\lambda_i}$ is non-zero to be the \emph{length} $\ell (p)$ of $p$. The strategy of the proof is to show that, if $\ell (p) > 0$, then a non-zero element of strictly shorter length can be found in the submodule $pS$ of $M$.

\bigskip

\noindent{\bf Sublemma 1:} If $\ell(p) > 1,$ then $pS$ contains a non-zero element $\widehat{p}$ with $0 < \ell(\widehat{p}) < \ell (p)$.

\noindent {\it Proof of Sublemma 1.} Amongst those terms in the expression (\ref{pform}) for $p$ with $r_{\lambda_i} \neq 0$, choose $r_{\lambda_j}$ such that $r_{\lambda_j}R + \rho R/\rho R$ has a maximal annihilator ideal $Q$ which has minimal height amongst all the maximal annihilators in $R$ of the non-zero $R$ modules in the list $\{r_{\lambda_i}R + \rho R/\rho R : \ell \leq i \leq k \}$. Let $r \in R$ be such that
$$ \mathrm{Ann}_R (X) = Q $$
for every non-zero $R$-submodule $X$ of $r_{\lambda_j}rR + \rho R/\rho R$. Now multiply $p$ by $\alpha^{-j} (r)$, to get
$$\begin{aligned}\label{onedone}  p_1 \, &:= \,p\alpha^{-j}(r)\\ &= \; r_{\lambda_j}r\theta^j + \sum_{i \neq j} r_{\lambda_i}\alpha^{i-j}(r)\theta^i + \rho b\alpha^{-j}(r) + \rho (1 - \theta) S.
\end{aligned}$$
Observe that, since we are assuming $\ell (p) > 1$, the summation over $i$ in the above expression is not empty. If one or more of the terms appearing under the summation symbol features $r_{\lambda_i}\alpha^{i-j}(r) \in \rho R$, then the proof of the sublemma is complete, taking $\widehat{p} := p_1$. Suppose then that this is not the case, and choose $w$ , $\ell \leq w \leq k, \, w \neq j$, such that $r_{\lambda_w}\alpha^{w-j} (r)R$ has a maximal annihilator ideal $J$ which is maximal amongst annihilator ideals of non-zero elements of the $R$-modules in the collection $\{ r_{\lambda_i}\alpha^{i-j}(r)R + \rho R/\rho R : i \neq j \}$. Choose $\widehat{r} \in R$ such that
\begin{equation}\label{tense} \mathrm{Ann}_R (r_{\lambda_w}\alpha^{w-j}(r)\widehat{r} + \rho R) = J.
\end{equation}
Now multiply $p_1$ by $\alpha^{-w}(\widehat{r})$ to get
\begin{align*} \label{payoff} p_2 \, &:= \, p_1\alpha^{-w}(\widehat{r})\\ &= r_{\lambda_j}r\alpha^{j-w}(\widehat{r})\theta^j + \sum_{i \neq j,w} r_i\alpha^{i-j}(r)\alpha^{i-w}(\widehat{r})\theta^i +  r_{\lambda_w}\alpha^{w-j}(r)\widehat{r}\theta^w + \rho b\alpha^{-j}(r)\alpha^{-w}(\widehat{r}) + \rho (1 - \theta) S.
\end{align*}
If $r_{\lambda_j}r\alpha^{j-w}(\widehat{r}) \in \rho R$ then the sublemma is proved. So, suppose on the other hand that
\begin{equation} \label{clash} r_{\lambda_j}r\alpha^{j-w}(\widehat{r}) \notin \rho R.
\end{equation}
Observe that $\alpha^{j-w}(J)$ cannot be strictly contained in $Q$, since
$$ \mathrm{height}(Q) \leq \mathrm{height}(J) = \mathrm{height}(\alpha^{j-w}(J)),$$
thanks to our choice of $Q$. Moreover, thanks to (\ref{hijak1}),we cannot have $\alpha^{j-w}(J) = Q$. Thus it is possible to choose $x \in J$ with $\alpha^{j-w}(x) \notin Q$. Then
$$ \widehat{p} \, := \, p_2 \alpha^{-w}(x) \, \in \, pS \subseteq M$$
satisfies
$$ 0 < \ell(\widehat{p}) < \ell(p), $$
and the sublemma is proved.

\bigskip

\noindent {\bf Sublemma 2:} Let $p \in M$ be written in normal form (\ref{pform}), with $\ell(p) = 1$ - that is, $\ell = k$ and $r_{\lambda_k} \neq 0.$ Then there exists $u \in R$ such that $0 \neq pu \in V.$

\noindent {\it Proof of Sublemma 2.} Simplifying notation slightly, let $p$ have coset representative
\begin{equation}\label{drip} r_i\theta^i + \rho b \in S \setminus \rho (1 - \theta)S,
\end{equation}
where $0 \neq r_i \in R \setminus \rho R$, $b \in R$, and $i \geq 0.$ We have to show that there exists $u \in R$ such that
\begin{equation}\label{high} (r_i\theta^i + \rho b)u \in \rho S \setminus \rho (1 - \theta)S. \end{equation}
We prove that (\ref{high}) is true with $u = \alpha^{-i}(\rho).$ For $(r_i\theta^i + \rho b)\alpha^{-i}(\rho)$ clearly lies in $\rho S$. On the other hand,
\begin{align*}  & \qquad (r_i\theta^i + \rho b)\alpha^{-i}(\rho) \in \rho(1 - \theta) S \\
& \Leftrightarrow \rho r_i \theta^i + \rho b \alpha^{-i}(\rho) \in \rho(1 - \theta) S \\
& \Leftrightarrow r_i \theta^i +  b \alpha^{-i}(\rho) \in (1 - \theta) S \\
& \Leftrightarrow (\theta^i - 1)\alpha^{-i}(r_i) + \alpha^{-i}(r_i) + b \alpha^{-i}(\rho) \in (1 - \theta)S \\
& \Leftrightarrow \alpha^{-i} (r_i) = -b\alpha^{-i}(\rho) \\
& \Leftrightarrow r_i = - \rho \alpha^i (b), \\
\end{align*}
where the penultimate equivalence follows from Lemma \ref{artinian}(a). But $r_i = - \rho \alpha^i (b)$
is a contradiction to the initial hypothesis that $r_i \notin \rho R$, so that (\ref{high}) is true and Sublemma 2 is proved.

Thus (\ref{crux}) follows at once from the two sublemmas, and $V=S/(1-\theta)S$ is a simple $S$-module with an injective hull that is not locally Artinian.

\medskip

\noindent{\bf Case 2.} Assume that (\ref{hijak1}) does not hold. Note that if $R$ is $\alpha$-simple, then $R$ is also $\alpha^m$-simple for any $m\in\N$. Indeed if $I$ is $\alpha^m$-stable proper ideal of $R$, then $I\alpha(I)\ldots\alpha^{m-1}(I)=0$ as it is an $\alpha$-stable proper ideal, and so $I = (0)$ since $R$ is a domain. Hence also every non-zero prime ideal has an infinite $\alpha^m$-orbit. Choose $n\in \N$ such that the conclusion of Lemma \ref{orbital} holds for $\alpha$ and $\mathcal{P}$. By the above we can apply Case 1 to $S'=R[\theta^n; \alpha^n]$ and obtain that $V=S'/(1-\theta^n)S'$ is a simple $S'$-module with an  injective hull that is not locally Artinian.

As an $S'$-module, $S$ is free with a basis formed by normal elements, $1, \theta, \ldots, \theta^{n-1}$. The $S$-module $V\otimes_{S'} S\simeq S/(1-\theta^n)S$ is, as an $S'$-module, isomorphic to
$$\bigoplus_{i=0}^{n-1} V^{\alpha^i}$$
where $V^{\alpha^i}$ is the $S'$-module defined by taking $V$ as the underlying Abelian group and setting $v\cdot s=v\alpha^i(s)$, where $\alpha^i$ is the automorphism of $S'$ induced by $\alpha^i$. Since $V$ is simple as $S'$-module, so is  each $V^{\alpha^i}$. Hence $V\otimes_{S'} S$ is a finite length $S'$-module. Therefore $V\otimes_{S'} S$ is a finite length $S$-module.

Since $S$ is free as an $S'$-module, $V\otimes_{S'}S\leq E_{S'}(V)\otimes_{S'}S$. Write $\leq_e$ for ``is an essential submodule of'', and observe that $V\otimes_{S'}S\leq_e E_{S'}(V)\otimes_{S'}S$ as $S'$-modules, and hence \emph{a fortiori} as $S$-modules. So $$E_{S'}(V)\otimes_{S'} S \leq_e E_S(V\otimes_{S'} S).$$

Since $E_1:=E_{S'}(V)\otimes_{S'} S$ is contained in $E_S(V\otimes S)$ and is not locally Artinian, there is a finitely generated $S'$-submodule $M$ of $E_1$ which is not $S'$-Artinian. The finitely generated $S$-submodule $MS$ of $E_1$. Then $MS$ is finitely generated also as an $S'$-module, and so, by \cite[Theorem 4]{FormanekJategaonkar} if $MS$ were $S$-Artinian, then $MS$ would be $S'$-Artinian. This is a contradiction, since $M \leq MS$ as $S'$-submodule. So $MS$ is not $S$-Artinian and $E_S (V \otimes S)$ is not a locally Artinian $S$-module, as claimed.

\end{proof}

In fact it can be shown, using an argument based on \cite[Exercise 31 on page 112]{Stenstrom} that $E_{S'}(V)\otimes_{S'} S=E_S(V\otimes S)$ but we do not give details here since we do not need that fact.

\section{Necessary and sufficient conditions for $(\diamond)$}\label{necsuff}

In handling primitive skew polynomial rings our approach is to strengthen the $\alpha$-special property, guaranteed by Theorem \ref{JurekLeroy} for such a primitive ring, to the stronger $\alpha$-simple condition. This can be achieved by localising at the smallest Ore set $\mathcal{A}$ of $S$ containing the $\alpha$-special element. However, to apply Proposition \ref{simplenotdiamond1} after this localisation, we need to exclude the possibility that $R\mathcal{A}^{-1}$ is a field. To achieve this, we need to assume that $R$ is ``sufficiently big'' in terms both of the height and the ``width'' of its lattice of prime ideals, and that it contains an uncountable field. The technicalities for this are provided by the lemmas of $\S$\ref{prelim}. The key results are then Proposition \ref{old2.1} and Theorem \ref{primitivecase}.

\subsection{Localisation lemmas}\label{prelim}

\begin{lemma}
\label{kenlem1} Let $T$ be a ring and ${\mathcal A}$ a multiplicatively closed Ore subset of regular elements with $1\in {\mathcal A}$. If there exists a right $T$-module $V$ such that $E_{T{\mathcal A}^{-1}}(V\otimes_T T{\mathcal A}^{-1})$ is not locally Artinian as a $T{\mathcal A}^{-1}$-module, then $E_T(V/\tau(V))$ is not locally Artinian, where $\tau(V)$ denotes the $\mathcal A$-torsion submodule of $V$.
\end{lemma}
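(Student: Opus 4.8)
The plan is to identify the given injective hull $E:=E_{T\mathcal{A}^{-1}}(V\otimes_T T\mathcal{A}^{-1})$, viewed as a $T$-module, with the $T$-injective hull $E_T(V/\tau(V))$, and then to push the failure of local Artinianness down from $T\mathcal{A}^{-1}$ to $T$. Throughout write $W:=V/\tau(V)$ and $V\mathcal{A}^{-1}:=V\otimes_T T\mathcal{A}^{-1}$.

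Since $\tau(V)$ is precisely the kernel of the localisation map $V\to V\mathcal{A}^{-1}$, the module $W$ is $\mathcal{A}$-torsion free and embeds in $W\mathcal{A}^{-1}$; moreover exactness of localisation applied to $0\to\tau(V)\to V\to W\to 0$ (together with $\tau(V)\mathcal{A}^{-1}=0$) gives $W\mathcal{A}^{-1}=V\mathcal{A}^{-1}$. I would then record three standard localisation facts. (1) $W\leq_e W\mathcal{A}^{-1}$ as $T$-modules: any nonzero $wa^{-1}$ with $w\in W$, $a\in\mathcal{A}$ satisfies $(wa^{-1})a=w\in W\setminus\{0\}$, since $a\in\mathcal{A}\subseteq T$. (2) $E$ is injective as a $T$-module: restriction of scalars along $T\to T\mathcal{A}^{-1}$ sends injectives to injectives because $T\mathcal{A}^{-1}$ is flat as a left $T$-module (a standard property of Ore localisation), so its left adjoint $-\otimes_T T\mathcal{A}^{-1}$ is exact and the adjunction $\mathrm{Hom}_{T\mathcal{A}^{-1}}(-\otimes_T T\mathcal{A}^{-1},E)\cong\mathrm{Hom}_T(-,E)$ transports injectivity. (3) $V\mathcal{A}^{-1}\leq_e E$ as $T$-modules: given a nonzero $T$-submodule $N$ of $E$, the $T\mathcal{A}^{-1}$-submodule $N\cdot T\mathcal{A}^{-1}$ is nonzero, hence meets $V\mathcal{A}^{-1}$ by $T\mathcal{A}^{-1}$-essentiality; writing a nonzero element of the intersection as $na^{-1}$ with $n\in N$, $a\in\mathcal{A}$ (possible by the right Ore condition), the element $n=(na^{-1})a$ lies in $N\cap V\mathcal{A}^{-1}$ and is nonzero. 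Combining (1) and (3) with transitivity of $\leq_e$ gives $W\leq_e E$ over $T$, and together with (2) this yields $E=E_T(W)=E_T(V/\tau(V))$.

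It then remains to show $E$ is not locally Artinian as a $T$-module. By hypothesis there is a finitely generated $T\mathcal{A}^{-1}$-submodule $F$ of $E$ which is not $T\mathcal{A}^{-1}$-Artinian; choosing $T\mathcal{A}^{-1}$-generators $x_1,\ldots,x_m$ of $F$ and setting $F_0:=\sum_i x_iT$, I obtain a finitely generated $T$-submodule of $E$ with $F_0\mathcal{A}^{-1}=F$. I claim $F_0$ is not $T$-Artinian. Writing $\iota:F_0\to F_0\mathcal{A}^{-1}$ for the localisation map, the assignment $L\mapsto\iota^{-1}(L)$ carries $T\mathcal{A}^{-1}$-submodules of $F$ injectively and order-preservingly to $T$-submodules of $F_0$, since $\iota^{-1}(L)\mathcal{A}^{-1}=L$ for every $T\mathcal{A}^{-1}$-submodule $L$ of $F$ (each element of $F$ has the form $\iota(m)a^{-1}$, and $\iota(m)=(\iota(m)a^{-1})a\in L$). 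Hence a strictly descending chain of $T\mathcal{A}^{-1}$-submodules of $F$ pulls back to a strictly descending chain of $T$-submodules of $F_0$; were $F_0$ Artinian, $F$ would be $T\mathcal{A}^{-1}$-Artinian, contradicting the choice of $F$. Thus $F_0$ is a finitely generated $T$-submodule of $E=E_T(V/\tau(V))$ that is not Artinian, so $E_T(V/\tau(V))$ is not locally Artinian.

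The essentiality steps and the submodule-lattice bookkeeping are routine manipulations of fractions. I expect the one point carrying genuine weight—and hence the main obstacle to spell out carefully—to be fact (2), that the restriction of an injective $T\mathcal{A}^{-1}$-module is $T$-injective, since this is where homological input (flatness of $T\mathcal{A}^{-1}$ as a left $T$-module) is used rather than direct computation.
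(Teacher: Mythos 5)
Your proof is correct and follows essentially the same route as the paper's: reduce to the torsion-free quotient $W=V/\tau(V)$, check that $W$ is essential in $E:=E_{T\mathcal{A}^{-1}}(V\otimes_T T\mathcal{A}^{-1})$ as a $T$-module, and pull a strictly descending chain of $T\mathcal{A}^{-1}$-submodules back to a strictly descending chain of $T$-submodules of the $T$-span $F_0$ of the generators via the identity $L=(L\cap F_0)T\mathcal{A}^{-1}$, exactly as in the paper. The only divergence is your step (2): proving $E$ is $T$-injective (via flatness of $T\mathcal{A}^{-1}$) to get the exact identification $E=E_T(W)$ is more than is needed, since the paper simply uses essentiality to embed $E$ into $E_T(V)$, which already suffices to transport the non-Artinian finitely generated submodule.
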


\begin{proof} Suppose that $E_{T{\mathcal A}^{-1}}(V\otimes T{\mathcal A}^{-1})$ is not locally Artinian as a $T{\mathcal A}^{-1}$-module. In particular $V$ is not ${\mathcal A}$-torsion. Let $\tau(V)$  be the ${\mathcal A}$-torsion submodule of $V$, then $\frac{V}{\tau(V)}\otimes T{\mathcal A}^{-1}\simeq V\otimes T{\mathcal A}^{-1}$. Thus replacing $V$ by $V/\tau(V)$ we may assume that $\tau(V)=0$.

It is easy to check that $E_{T{\mathcal A}^{-1}}(V\otimes_T T{\mathcal A}^{-1})$ is an essential extension of $V$ as a $T$-module. In particular this implies that
$$E_{T{\mathcal A}^{-1}}(V\otimes_T T{\mathcal A}^{-1}) \leq E_T(V).$$

Now let $e_1,\ldots, e_n\in E_{T{\mathcal A}^{-1}}(V\otimes_T T{\mathcal A}^{-1})$ be such that $\displaystyle W:=\sum_{j=1}^m e_j T{\mathcal A}^{-1}$ contains an infinite descending chain
\begin{eqnarray}\label{down}
\ldots \subset W_{i+1}\subset W_i\subset \ldots \subset W_2\subset W_1=W
\end{eqnarray}
of $T{\mathcal A}^{-1}$-submodules of $W$. For all $i$, $W_i=(W_i\cap  \sum_{j=1}^m e_j T)T{\mathcal A}^{-1}$, so (\ref{down}) yields  an infinite strictly descending chain of $T$-submodules of $\displaystyle\sum_{j=1}^m e_j T$, as required.
\end{proof}

The example (already featuring in $\S$\ref{prim}) of $R = k[[X]]$ and $\mathcal{A} = \{X^i : i \geq 0 \}$ should be borne in mind in conjunction with the next lemma.

\begin{lemma}\label{kdim1}
Let $R$ be a commutative Noetherian domain which is an algebra over an uncountable field, but is not itself a field. Suppose that there exists a countable multiplicatively closed subset ${\mathcal A}$ of $R\setminus \{0\}$ such that $R{\mathcal A}^{-1}$ is the quotient field of $R$. Then $R$ has Krull dimension 1 and $\mathrm{Spec}(R)$ is countable.
\end{lemma}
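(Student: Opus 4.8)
The plan is to play off two opposing cardinality bounds on the height-one primes of $R$: the localisation hypothesis will force there to be only countably many of them, while the uncountable base field will force any prime of height $\geq 2$ to contain uncountably many. Together these pin the Krull dimension to be exactly $1$ and make $\mathrm{Spec}(R)$ countable.

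First I would extract the arithmetic content of the hypothesis $R\mathcal{A}^{-1}=\mathrm{Frac}(R)$. Since $\mathcal{A}\subseteq R\setminus\{0\}$ and $R\mathcal{A}^{-1}$ is a field, the only prime of $R$ disjoint from $\mathcal{A}$ is $(0)$; equivalently, every nonzero prime of $R$ meets $\mathcal{A}$. In particular, if $P$ is a height-one prime then it contains some $a\in\mathcal{A}$, hence contains a prime minimal over $(a)$; as $R$ is Noetherian such a minimal prime has height one by Krull's principal ideal theorem, so it must coincide with $P$. Therefore every height-one prime is minimal over $(a)$ for some $a\in\mathcal{A}$. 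Since $\mathcal{A}$ is countable and each $(a)$ has only finitely many minimal primes, the set of height-one primes of $R$ is countable.

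The crux, and the only place the uncountable field $k$ is used, is the complementary claim that any prime $P$ of $R$ with $\mathrm{height}(P)\geq 2$ contains uncountably many height-one primes. To prove it I would first choose a nonzero $x\in P$; its finitely many minimal primes $Q_1,\dots,Q_m$ all have height one, so $P\not\subseteq Q_i$ for each $i$, and prime avoidance yields $y\in P\setminus\bigcup_i Q_i$, with $y\neq 0$. The elements $x+\lambda y$, $\lambda\in k$, are then nonzero non-units lying in $P$ for all but at most one $\lambda$, and for each such $\lambda$ one may choose a prime $P_\lambda\subseteq P$ minimal over $(x+\lambda y)$, necessarily of height one. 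These are pairwise distinct: if $P_\lambda=P_\mu$ for $\lambda\neq\mu$ then this prime contains $(x+\lambda y)-(x+\mu y)=(\lambda-\mu)y$, hence contains $y$ (as $\lambda-\mu\in k^{\times}$), and then contains $x$ as well, contradicting $y\notin\bigcup_i Q_i$. Thus $\lambda\mapsto P_\lambda$ is injective on an uncountable set, proving the claim.

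Putting the two bounds together, $R$ can have no prime of height $\geq 2$, so $R$ has Krull dimension at most $1$; since $R$ is not a field, the dimension is exactly $1$. Consequently every nonzero prime has height one, and $\mathrm{Spec}(R)=\{(0)\}\cup\{\text{height-one primes}\}$ is countable. The main obstacle is the uncountable-field claim of the preceding paragraph, where the interplay between prime avoidance and the injectivity of $\lambda\mapsto P_\lambda$ must be handled carefully; everything else is a routine consequence of the localisation hypothesis and Noetherianity.
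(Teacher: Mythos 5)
Your proof is correct, and it takes a genuinely different route from the paper's, whose proof is a two-line appeal to the literature: the paper cites Sharp--V\'amos for the fact that a Noetherian ring containing an uncountable field has the \emph{countable} prime avoidance property, and then cites a theorem of Karamzadeh--Moslemi on $G$-type domains to conclude that $\mathrm{Spec}(R)$ is countable and every non-zero prime is maximal. You instead prove everything from scratch: the localisation hypothesis plus Krull's principal ideal theorem makes the set of height-one primes countable (each is minimal over $(a)$ for some $a\in\mathcal{A}$), while the pencil $x+\lambda y$, $\lambda\in k$, shows that a prime of height at least $2$ would contain uncountably many distinct height-one primes $P_\lambda$ --- a contradiction. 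Note that your pencil argument is precisely the mechanism inside the Sharp--V\'amos countable prime avoidance theorem; indeed, countable avoidance applied to the inclusion $P\subseteq\{0\}\cup\bigcup\{\text{height-one primes}\}$ yields the same contradiction, which is essentially how the cited $G$-type domain theorem runs. So in effect you have inlined and specialised the content of both references. What your route buys is a self-contained, elementary proof that makes transparent exactly where each hypothesis enters (countability of $\mathcal{A}$, uncountability of $k$, Noetherianity, and $R$ not being a field); what the paper's route buys is brevity and pointers to the more general statements of which this lemma is a special case.
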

\begin{proof}
Assume $R$ and ${\mathcal A}$ are as above. By \cite[Proposition 2.5]{SharpVamos} every Noetherian ring containing an uncountable field has the countable prime avoidance property. It follows from \cite[Theorem 3.8]{KaramzadehMoslemi} that $\mathrm{Spec}(R)$ is countable and that each non-zero prime ideal is maximal.
\end{proof}

\subsection{$(\diamond)$ when $S$ is primitive}\label{what}

Recall that the multiplicatively closed $\alpha$-invariant Ore subset ${\mathcal A}$ of $S$ generated by an $\alpha$-special element $a$ of $R$ was defined in Remark \ref{torfree}.

\begin{proposition}\label{old2.1}

Let $R$ be a commutative Noetherian domain, $\alpha$ an automorphism of $R$.   Suppose that $S=R[\theta; \alpha]$ is primitive, with $\alpha$-special element $a$ and associated Ore subset ${\mathcal A}$. Suppose that $R{\mathcal A}^{-1}$ is not a field. Then $S$ does not satisfy ($\diamond$).
\end{proposition}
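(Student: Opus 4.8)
The plan is to localise $S$ at the Ore set $\mathcal A$, thereby upgrading the $\alpha$-special hypothesis on $R$ to genuine $\alpha$-simplicity of the localised coefficient ring, apply Proposition \ref{simplenotdiamond1} there, and then transport the resulting non-locally-Artinian injective hull back down to $S$ by means of Lemma \ref{kenlem1}. First I would identify $S\mathcal A^{-1}$ with a skew polynomial ring $R\mathcal A^{-1}[\theta;\bar\alpha]$ over the localised domain $R\mathcal A^{-1}$, with $\bar\alpha$ the induced automorphism (using $\alpha(\mathcal A)\subseteq\mathcal A$, so that $\theta$ carries $R\mathcal A^{-1}$ into itself). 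The key observation is that $R\mathcal A^{-1}$ is $\bar\alpha$-simple: if $\tilde I$ is a non-zero $\bar\alpha$-stable ideal of $R\mathcal A^{-1}$, then $I:=\tilde I\cap R$ is a non-zero $\alpha$-stable ideal of $R$, so by the defining property of the $\alpha$-special element $a$ some $N^\alpha_n(a)=a\alpha(a)\cdots\alpha^{n-1}(a)$ lies in $I\subseteq\tilde I$; but $N^\alpha_n(a)$ is a product of elements of $\mathcal A$, hence a unit of $R\mathcal A^{-1}$, forcing $\tilde I=R\mathcal A^{-1}$. Since $R\mathcal A^{-1}$ is moreover a commutative Noetherian domain and, by hypothesis, not a field, Proposition \ref{simplenotdiamond1} applies to it.

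The difficulty is that Proposition \ref{simplenotdiamond1} in general only delivers a bad injective hull for a finite-length module $S\mathcal A^{-1}/(1-\theta^n)S\mathcal A^{-1}$, and the natural candidate $S/(1-\theta^n)S$ for pulling this back is neither simple nor of finite length over $S$; Lemma \ref{kenlem1} would then only produce a finitely generated $S$-module with non-locally-Artinian hull, which (as the example $k[x]$, with $E(k[x])$ the non-locally-Artinian field of fractions, shows) does not by itself contradict $(\diamond)$. To obtain a genuinely \emph{simple} $S$-module I would arrange to be in Case 1 of the proof of Proposition \ref{simplenotdiamond1}, where the bad module is itself the simple module $S\mathcal A^{-1}/(1-\theta')S\mathcal A^{-1}$. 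Concretely, $\bar\alpha$-simplicity forces every non-zero prime of $R\mathcal A^{-1}$ to have infinite $\bar\alpha$-orbit (a finite orbit would yield a proper non-zero stable ideal); choosing a non-zero non-unit $\rho\in R\mathcal A^{-1}$ and letting $\mathcal P$ be the annihilator primes of $R\mathcal A^{-1}/\rho R\mathcal A^{-1}$, Lemma \ref{orbital} produces an $n$ with $\bar\alpha^{jn}(P_i)\notin\mathcal P$ for all $j\neq0$, i.e. the orbit-separation hypothesis (\ref{hijak1}) holds for $\bar\alpha^n$.

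By Lemma \ref{power}(a) it then suffices to prove that $T:=R[\theta^n;\alpha^n]$ fails $(\diamond)$. Here $b:=N^\alpha_n(a)$ is an $\alpha^n$-special element: one has $N^{\alpha^n}_m(b)=N^\alpha_{nm}(a)$, and any non-zero $\alpha^n$-stable ideal $J$ has $J\cap\alpha(J)\cap\cdots\cap\alpha^{n-1}(J)$ a non-zero $\alpha$-stable ideal, which captures some $N^\alpha_k(a)$ and hence some $N^{\alpha^n}_m(b)\in J$. Its associated Ore set $\mathcal B$ satisfies $R\mathcal B^{-1}=R\mathcal A^{-1}$, since inverting a block product $\prod_{l=0}^{n-1}\alpha^{ni+l}(a)$ inverts each factor individually, so $\mathcal A$ and $\mathcal B$ generate the same localisation; in particular $R\mathcal B^{-1}$ is still not a field, and is $\bar\alpha^n$-simple, now satisfying (\ref{hijak1}).

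Thus Case 1 of Proposition \ref{simplenotdiamond1}, applied to $R\mathcal A^{-1}$ with $\bar\alpha^n$, shows that the simple $T\mathcal B^{-1}$-module $V_T\mathcal B^{-1}$, where $V_T:=T/(1-b\theta^n)T$, has injective hull that is not locally Artinian. By Remark \ref{torfree} the $T$-module $V_T$ is simple and $\mathcal B$-torsion-free, so $V_T\otimes_T T\mathcal B^{-1}=V_T\mathcal B^{-1}$ and $\tau(V_T)=0$; Lemma \ref{kenlem1} therefore gives that $E_T(V_T)$ is not locally Artinian. As $V_T$ is a simple $T$-module, $T$ fails $(\diamond)$, and hence so does $S$. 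The main obstacle throughout is precisely this final descent: securing a bad injective hull over an honestly simple module of $S$ (equivalently $T$) rather than over a merely finitely generated module, which is what compels the passage to $\theta^n$ and the use of the explicit torsion-free simple module of Remark \ref{torfree}.
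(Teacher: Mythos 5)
Your proposal is correct, and its skeleton is the same as the paper's: localise at $\mathcal{A}$ to upgrade $\alpha$-speciality to $\alpha$-simplicity of $R\mathcal{A}^{-1}$, feed this into Proposition \ref{simplenotdiamond1}, and descend via Lemma \ref{kenlem1}, with the $\alpha^n$-special element $b=N^\alpha_n(a)$ and Proposition \ref{simplemodule} supplying the simple module that makes the descent bite. Where you genuinely diverge is in the organisation of the descent. The paper applies Proposition \ref{simplenotdiamond1} as a black box to $S\mathcal{A}^{-1}=R\mathcal{A}^{-1}[a\theta;\alpha]$ (so $n$ is whatever the proposition produces), then pulls back the module $W=S/(1-(a\theta)^n)S$ to $S$ itself, proves $W$ has \emph{finite length} over $S$ (via the decomposition of $V\otimes_{S'}S$ into twists $V^{\alpha^i}$, exactly as at the end of the proof of Proposition \ref{simplenotdiamond1}), and applies Lemma \ref{kenlem1} to $W$, concluding because a finite-length module $W/\tau(W)$ with non-locally-Artinian hull forces some simple in its socle to violate $(\diamond)$. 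You instead open up the proof of Proposition \ref{simplenotdiamond1}: you pick $\rho$ and $\mathcal{P}$ in $R\mathcal{A}^{-1}$, use Lemma \ref{orbital} to choose $n$ so that hypothesis (\ref{hijak1}) holds for $\bar\alpha^n$, transfer the problem to $T=R[\theta^n;\alpha^n]$ at the outset by Lemma \ref{power}(a), and then descend the honestly \emph{simple}, $\mathcal{B}$-torsion-free $T$-module $V_T=T/(1-b\theta^n)T$ (Proposition \ref{simplemodule} plus Remark \ref{torfree}, together with your correct observation that $R\mathcal{B}^{-1}=R\mathcal{A}^{-1}$ since in a commutative ring a product is a unit only if each factor is). The obstruction you flag — that a merely finitely generated module with bad hull, as in the $k[x]$ example, does not contradict $(\diamond)$ — is real, and both proofs must deal with it; the paper's fix is "finite length" where yours is "simple over the subring $T$". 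Your version buys a cleaner endgame (no need to prove $W$ has finite length over $S$, and the contradiction with $(\diamond)$ is immediate from simplicity), at the cost of invoking Case 1 of the proof of Proposition \ref{simplenotdiamond1} rather than its statement, hence re-running the orbit-separation setup; the paper's version stays modular with respect to that proposition but needs the extra finite-length bookkeeping. Both arguments share (and so you inherit rather than create) the paper's implicit reliance on $\mathcal{A}$, respectively $\mathcal{B}$, being an Ore set of the skew polynomial ring with $\alpha$ inducing an automorphism of the localised coefficient ring, as asserted in Remark \ref{torfree}.
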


\begin{proof}
The existence of the element $a$ is guaranteed by Theorem 4.2. Note that $R{\mathcal A}^{-1}$ is $\alpha$-simple and $S{\mathcal A}^{-1}=R{\mathcal A}^{-1}[\theta; \alpha]=R{\mathcal A}^{-1}[a\theta; \alpha]$. By Proposition 4.10 there is $n\in \N$ such that
$$E_{S{\mathcal A}^{-1}}\left(S{\mathcal A}^{-1}/(1-(a\theta)^n\right)S{\mathcal A}^{-1})$$
is not locally Artinian.

Set $S'=R[\theta^n; \alpha^n]$. Since for any $\alpha^n$-stable ideal $I$ of $R$, the ideal $I\alpha(I)\ldots\alpha^{n-1}(I)$ is $\alpha$-stable, it is clear that $R$ is $\alpha^n$-special with the special element $b=a\alpha(a)\alpha^2(a)\ldots\alpha^{n-1}(a)$. Thus $S'$ is primitive and by Proposition 4.3, $V=S'/(1-b\theta^n)S'=S'/(1-(a\theta)^n)S'$ is simple as a $S'$-module since $b\theta^n=(a\theta)^n$.

By similar arguments to the ones at the end of the proof of Proposition \ref{simplenotdiamond1},  $W:=V\otimes S=S/(1-(a\theta)^n)S$ is of finite length as $S$-module. By Lemma 5.1 the injective hull of the finite length module $W/\tau(W)$ is not locally Artinian and the result follows.
\end{proof}

We can now give necessary and sufficient conditions for $S = R[\theta;\alpha]$ to satisfy $(\diamond)$ when $S$ is primitive. Unfortunately, however, these two conditions do not together cover all possibilities.

\begin{theorem}\label{primitivecase} Let $R$ be a commutative Noetherian ring, $\alpha$ an automorphism of $R$, and let $S = R[\theta;\alpha]$. Suppose that $S$ is primitive.
\begin{enumerate}
\item If $R$ has Krull dimension 0 then $S$ satisfies $(\diamond)$.
\item Suppose that $R$ contains an uncountable field. Suppose also that either $R$ has Krull dimension at least 2, or $\mathrm{Spec}(R)$ is uncountable. Then $S$ does not satisfy $(\diamond)$.
\end{enumerate}
\end{theorem}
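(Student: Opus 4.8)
The plan is to reduce each part to the machinery already developed. For part (a), I would argue that when $R$ has Krull dimension $0$ and is Noetherian, it is Artinian, and hence a finite product of local Artinian rings. By Theorem \ref{JurekLeroy}, primitivity of $S$ forces $R$ to be $\alpha$-special and $\alpha$ to have infinite order; since $R$ is Artinian, $\alpha$ must permute its finitely many minimal primes, so after passing to a power $\theta^n$ via Lemma \ref{power}(a) I can assume (by Lemma \ref{npower}) that $R$ is $\alpha$-prime, with $S' = R[\theta^n;\alpha^n]$ still controlling $(\diamond)$. The key point is that every simple $S$-module $V$ now has $S/\mathrm{ann}_S(V)$ Artinian: indeed $R$ being Artinian means every prime of $R$ is maximal, so for any simple $V$ the intersection $\mathrm{ann}_S(V)\cap R$ is an $\alpha$-prime ideal of an Artinian ring, the induced automorphism on $R/(\mathrm{ann}_S(V)\cap R)$ has finite order by \cite[Theorem 4.3]{Irving79}, and then the argument of Theorem \ref{goodprimes} (via \cite{DamianoShapiro} and Kaplansky's theorem) shows $S/\mathrm{ann}_S(V)$ satisfies a PI and is primitive, hence Artinian. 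Proposition \ref{sslc}, together with the s.s.l.c. from Proposition \ref{ARsep}(a), then delivers $(\diamond)$.

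For part (b), the strategy is to combine the reduction to the $\alpha$-simple-domain case with the localisation lemmas of $\S$\ref{prelim}. First I would use Lemma \ref{npower} to reduce to the case where $R$ is an $\alpha$-prime ring; passing to $T = R[\theta^n;\alpha^n]$ and then to $T/QT \cong (R/Q)[\theta^n;\alpha^n]$ via Lemma \ref{power}, I may assume $R$ is a domain (the factor $R/Q$ being a domain), noting that the hypotheses on Krull dimension and the cardinality of $\mathrm{Spec}$ pass to $R/Q$ appropriately under these finite reductions. Since $S$ is primitive, Theorem \ref{JurekLeroy} supplies an $\alpha$-special element $a$ and its associated Ore set $\mathcal{A}$. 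The crux is then Proposition \ref{old2.1}: if I can show that $R\mathcal{A}^{-1}$ is \emph{not} a field, then $S$ fails $(\diamond)$ and I am done.

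The main obstacle, and the place where the cardinality and dimension hypotheses are used, is precisely ruling out that $R\mathcal{A}^{-1}$ is a field. Here I would invoke Lemma \ref{kdim1} in contrapositive form: the Ore set $\mathcal{A}$ is generated by the countably many elements $\{\alpha^i(a): i \geq 0\}$ and so is a countable multiplicatively closed subset of $R\setminus\{0\}$. If $R\mathcal{A}^{-1}$ were a field, it would necessarily be the quotient field $\mathrm{Frac}(R)$ (since any localisation of a domain that is a field must be the full fraction field), and then Lemma \ref{kdim1}, applicable because $R$ contains an uncountable field, would force $R$ to have Krull dimension $1$ and $\mathrm{Spec}(R)$ countable. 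This directly contradicts the standing assumption in (b) that either $\dim R \geq 2$ or $\mathrm{Spec}(R)$ is uncountable. Hence $R\mathcal{A}^{-1}$ is not a field, and Proposition \ref{old2.1} applies. The delicate point requiring care is confirming that the dimension/spectrum hypotheses survive the reduction from the original $R$ to the domain $R/Q$, and that the uncountable field embeds in the reduced ring; I expect these to follow since $R/Q$ is a finite-index-style quotient arising from the $\alpha$-orbit of minimal primes, but verifying the cardinality of $\mathrm{Spec}(R/Q)$ and its Krull dimension against those of $R$ is where the argument must be handled attentively.
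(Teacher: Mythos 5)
Part (a) of your proposal contains a genuine gap --- in fact a false step. You claim that every simple $S$-module $V$ has $S/\mathrm{ann}_S(V)$ Artinian. But $S$ is primitive, so there is a \emph{faithful} simple module $V$, and for it $S/\mathrm{ann}_S(V)=S$, which is never Artinian: $\theta S\supset \theta^2S\supset\cdots$ is a strictly descending chain of right ideals (alternatively, a prime Artinian ring is simple, whereas $\theta S$ is a proper non-zero two-sided ideal of $S$). The misstep is the appeal to \cite[Theorem 4.3]{Irving79}: as used in Theorem \ref{goodprimes}, that result applies to primes $Q$ with $(Q\cap R)S\subset Q$ \emph{strictly}, whereas the annihilator of a faithful simple module is $(0)=((0)\cap R)S$, an induced ideal. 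For such $V$ the automorphism induced on $R/(\mathrm{ann}_S(V)\cap R)=R$ is $\alpha$ itself, which has \emph{infinite} order by Theorem \ref{JurekLeroy}, precisely because $S$ is primitive --- so the ``finite order'' conclusion fails exactly in the case that matters, and Proposition \ref{sslc} cannot be invoked for the faithful simples. The paper's proof of (a) takes a different route that covers these modules: by \cite[Theorem 15.19]{GoodearlWarfield} the ring $S$ has Krull dimension $1$, and a prime Noetherian ring of Krull dimension $1$ satisfies $(\diamond)$ (see \cite[Proposition 5.5]{Musson80}); it is this Krull dimension argument, not the second layer condition with Artinian co-annihilators, that disposes of the faithful simple modules.

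Your part (b) is essentially the paper's own proof: the same reduction via Lemmas \ref{npower} and \ref{power} to the case where the coefficient ring is the domain $R/Q$, then Theorem \ref{JurekLeroy} to produce the $\alpha$-special element $a$ and its countable Ore set $\mathcal{A}$, Lemma \ref{kdim1} to rule out $R\mathcal{A}^{-1}$ being the quotient field, and Proposition \ref{old2.1} to conclude. Two of your remarks are worth keeping: the observation that a localisation of a domain which is a field must equal $\mathrm{Frac}(R)$ correctly bridges the hypothesis of Proposition \ref{old2.1} (``$R\mathcal{A}^{-1}$ is not a field'') with the conclusion of Lemma \ref{kdim1} (``not the quotient field''), a step the paper leaves implicit; and the transfer of the hypotheses to $R/Q$, which you flag as delicate, is in fact unproblematic because the minimal primes of $R$ form a single finite $\alpha$-orbit, so the rings $R/\alpha^i(Q)$ are pairwise isomorphic and $\mathrm{Spec}(R)$ is the finite union of the closed sets $V(\alpha^i(Q))$, whence both the Krull dimension and the uncountability of the spectrum descend to $R/Q$.
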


\begin{proof} (a) When $R$ has Krull dimension 0, $S$ has Krull dimension 1 by \cite[Theorem 15.19]{GoodearlWarfield}. Since $S$ is a prime Noetherian ring of Krull dimension 1, $(\diamond)$ follows easily (see for instance \cite[Proposition 5.5]{Musson80}).

(b) Suppose that $R$ contains an uncountable field, and that $R$ has Krull dimension 2 or more, or has uncountably many prime ideals. Let $Q$ be a minimal prime ideal of $R$, let $n \in\mathbb{N}$ be such that the minimal primes of $R$ are $\{\alpha^i (Q) : 0 \leq i \leq n-1\}$, and set $T := R[\theta^n ; \alpha^n]$. By Lemma \ref{npower}(b) $T/QT$ is also primitive, and by Lemma \ref{power}(b) it is enough to show that $T/QT$ does not satisfy $(\diamond)$. In other words, in proving (b), we can pass to $(R/Q)[\theta^n; \alpha^n]$, observing that $R/Q$ will like $R$ contain an uncountable field, and have Krull dimension at least 2 or have uncountably many prime ideals, just as $R$ does. That is, we may assume additionally that $R$ is a domain in proving (b).

Let $a$ be the $\alpha$-special element of $R$ guaranteed by Theorem \ref{JurekLeroy}, with $\mathcal{A}$ the $\alpha$-invariant Ore set it generates. By Lemma \ref{kdim1} $R\mathcal{A}^{-1}$ cannot be the quotient field of $R$. Proposition \ref{old2.1} now implies that $S$ does not satisfy $(\diamond)$.
\end{proof}

It is straightforward to show that in Theorem \ref{primitivecase} under the hypothesis  (a), the ring  $R$  is a finite direct sum of isomorphic fields.

\section{Property $(\diamond)$ when $R$ is affine}

The gap between parts (a) and (b) of Theorem \ref{primitivecase} can be closed so as to determine completely the occurrence of $(\diamond)$ for $S = R[\theta ; \alpha]$, provided $R$ contains an uncountable field and has no $\alpha$-invariant factors of Krull dimension 1 whose spectrum is countable. In particular, this allows us to completely settle the matter when $R$ is an affine algebra over an uncountable field, as follows.

\begin{theorem}\label{affine}
Let $k$ be an uncountable field and $R$  an affine $k$-algebra, and let $\alpha$ be a $k$-algebra automorphism of $R$. Let $S = R[\theta ; \alpha]$. Then the following are equivalent:
\begin{enumerate}
 \item $S$ satisfies $(\diamond)$;
 \item all simple $S$-modules are finite dimensional $k$-vector spaces.
\end{enumerate}
\end{theorem}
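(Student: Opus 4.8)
The plan is to prove the two implications separately, and to observe that the uncountability of $k$ is needed only for one of them. Throughout I will use the reduction machinery of $\S3$ and the primitive-case analysis of Theorem \ref{primitivecase}, together with standard Nullstellensatz-type facts about affine $k$-algebras.

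\medskip

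\noindent\textbf{Strategy for $(b)\Rightarrow(a)$.} This direction does not require $k$ uncountable and follows from the second layer condition. Suppose every simple $S$-module is finite dimensional over $k$. By Corollary \ref{primitive}, to establish $(\diamond)$ it suffices to show $E_S(V)$ is locally Artinian for every simple $V$ whose annihilator $Q$ satisfies $Q=(Q\cap R)S$; and by Proposition \ref{sslc} together with the s.s.l.c.\ provided by Proposition \ref{ARsep}(a), it is enough to check that $S/Q$ is Artinian for such $V$. But $V$ is a faithful simple module over $S/Q$, so $S/Q$ is a primitive ring; and since $V$ is finite dimensional over $k$, the ring $S/Q$ acts as a subalgebra of $\mathrm{End}_k(V)\cong M_d(k)$ for $d=\dim_k V$, hence $S/Q$ is itself a finite dimensional $k$-algebra and in particular Artinian. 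This gives $(\diamond)$.

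\medskip

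\noindent\textbf{Strategy for $(a)\Rightarrow(b)$.} I argue by contraposition: assuming some simple $S$-module $V$ is infinite dimensional over $k$, I produce a primitive factor of $S$ violating $(\diamond)$. Let $Q=\mathrm{ann}_S(V)$; then $V$ is a faithful simple module over the primitive ring $\overline{S}:=S/Q$, which is infinite dimensional over $k$. The key step is to show that $Q$ is induced from $R$, i.e.\ $Q=(Q\cap R)S$. If instead $(Q\cap R)S\subset Q$, then Theorem \ref{goodprimes} forces $S/Q$ to be Artinian, hence (being an affine PI-algebra, by the argument there via \cite{Irving79} and \cite{DamianoShapiro}) finite dimensional over $k$ by the Nullstellensatz for affine PI-algebras, contradicting $\dim_k V=\infty$. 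So $Q=(Q\cap R)S$. Writing $\overline{R}=R/(Q\cap R)$ and using Lemma \ref{stable}(b), we have $\overline{S}\cong \overline{R}[\theta;\overline{\alpha}]$ with $\overline{R}$ an affine $k$-algebra containing the uncountable field $k$ and $\overline{S}$ primitive. The hypothesis $\dim_k V=\infty$ means $\overline{S}$ is not finite dimensional, so by the affine PI-Nullstellensatz $\overline{S}$ is not PI, whence by \cite{PascaudValette} and Theorem \ref{JurekLeroy} the ring $\overline{R}$ cannot have Krull dimension $0$. Thus $\overline{R}$ has Krull dimension at least $1$; I then invoke Theorem \ref{primitivecase}(b). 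Since $\overline{R}$ is affine over the uncountable field $k$ and has positive Krull dimension, it either has Krull dimension at least $2$, or has Krull dimension exactly $1$, in which case its spectrum is uncountable (an affine curve over an uncountable field has uncountably many maximal ideals). Either way the hypotheses of Theorem \ref{primitivecase}(b) are met, so $\overline{S}$ fails $(\diamond)$, and by Proposition \ref{reduction}(c) so does $S$, contradicting $(a)$.

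\medskip

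\noindent\textbf{Main obstacle.} The delicate point is bridging ``$\dim_k V=\infty$'' and the Krull-dimension/spectrum hypotheses of Theorem \ref{primitivecase}(b). This rests on the Nullstellensatz for affine PI-algebras (a finite dimensional primitive affine $k$-algebra is exactly a PI primitive affine algebra of endomorphism type), and on the dichotomy that an affine $k$-algebra of Krull dimension $1$ over an \emph{uncountable} field has uncountable spectrum --- precisely where the cardinality hypothesis on $k$ enters. Verifying that the reduction to the domain case in Theorem \ref{primitivecase}(b) is compatible with the affineness of $\overline{R}$ requires care, but is routine given Lemma \ref{npower} and Lemma \ref{power}.
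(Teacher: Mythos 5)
Your proposal is correct and takes essentially the same route as the paper: $(b)\Rightarrow(a)$ via the second layer condition (Propositions \ref{ARsep} and \ref{sslc}), and $(a)\Rightarrow(b)$ by the same trichotomy on the annihilator --- non-induced annihilators handled by Theorem \ref{goodprimes} plus Kaplansky's theorem for affine PI algebras, induced annihilators with $\overline{R}$ of Krull dimension $0$ ruled out, and positive Krull dimension killed by uncountability of the spectrum together with Theorem \ref{primitivecase}(b) and Proposition \ref{reduction}(c). The only differences are cosmetic (you argue by contraposition), except that your step ``$\overline{S}$ is not PI, whence by \cite{PascaudValette} and Theorem \ref{JurekLeroy} the ring $\overline{R}$ cannot have Krull dimension $0$'' silently uses the fact the paper spells out explicitly: an affine $\alpha$-prime $k$-algebra of Krull dimension $0$ is a finite product of finite field extensions of $k$ (Nullstellensatz), so $\overline{\alpha}$ would have finite order and $\overline{S}$ would be PI.
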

\begin{proof}
(b)$\Rightarrow$(a) This follows immediately from Propositions \ref{ARsep} and \ref{sslc} and does not require that $k$ is uncountable.

(a)$\Rightarrow$(b) Suppose that $S$ satisfies $(\diamond)$. Let $V$ be a simple $S$-module and $P=\mathrm{Ann}_S(V)$.

If $\theta\in P$, then $P/\theta S$ is a primitive ideal of the commutative affine $k$-algebra $R$, so $V$ is finite dimensional over $k$, thanks to the Nullstellensatz.

 Suppose next that $(P\cap R)S\subset P$ and $\theta\notin P$. Then \cite[Theorem 4.3]{Irving79}  implies that $S/(P\cap R)S$ satisfies a polynomial identity. Since $S/(P\cap R)S$ is also by hypothesis and construction a   Noetherian affine $k$-algebra, $V$ is finite dimensional over $k$ by Kaplansky's theorem, \cite[Theorem I.13.3]{BrownGoodearl}.

Suppose finally that $(P\cap R)S=P$ and let $Q$ as usual denote a minimal prime over $P\cap R$  in $R$. If $R/P \cap R$ has Krull dimension 0, it is a finite direct sum of copies of the field $R/Q := F$,  by Lemma \ref{stable}(d). Hilbert's Nullstellensatz ensures that $F$ is finite dimensional over $k$, so that some finite power, say $n$, of $\alpha$ not only fixes $Q$, but then also induces the identity on $F$. Hence, $S/P$ is a finite (free) module over the commutative ring $(R/P \cap R)[\theta^n]$, so $S/P$ satisfies a polynomial identity and Kaplansky's theorem applies. Suppose on the other hand that $R/P \cap R$ has Krull dimension at least 1. Then, for example using Lying Over and the Noether normalisation theorem \cite[Proposition 4.15 and $\S$8.2.1, Theorem A1]{E}, $\mathrm{Spec}(R/Q)$ is uncountable since $k$ is uncountable. By Theorem \ref{primitivecase}(b), this contradicts property $(\diamond)$ for $S$. So no such primitive ideals $P$ can exist in $S$ and (b) follows.
\end{proof}

Notice that $(b)\Rightarrow(a)$ of this theorem is valid, with the same proof, for an arbitrary commutative Noetherian $k$-algebra over any field. Indeed, if we change statement $(b)$ to
$$ (b') \quad S/\mathrm{Ann}_S(V) \textit{ is Artinian for all simple } S\textit{-modules } V, $$
then a small adjustment to the argument confirms that $(b')\Rightarrow(a)$ is true for all commutative Noetherian coefficient rings $R$.

Let $k$ be any field, let $R$ be a commutative affine $k$-algebra and let $\alpha$ be a $k$-algebra automorphism of $R$. In the light of Theorem \ref{affine} it is natural to ask exactly what conditions on $R$ and $\alpha$ are required for (b) of Theorem \ref{affine} to hold. This appears to be quite a subtle question, which we discuss further in  \S 7. First, we give here an important special case in which a complete answer is available:

\begin{proposition} \label{poly}Let $k$ be a field, $t$ a positive integer, $V$ a vector space over $k$ with basis $\{x_1, \ldots , x_t\}$ and $\alpha \in \mathrm{GL}(t,k)$ an automorphism of $V$. Let $R = k[x_1, \ldots , x_t]$, so that $\alpha$ induces a $k$-algebra automorphism of $R$, also denoted by $\alpha$. Then $S := R[\theta; \alpha]$ satisfies $(\diamond)$ if and only if the order $|\alpha|$ of the automorphism $\alpha$ is finite.
\end{proposition}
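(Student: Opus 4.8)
The plan is to prove the two implications separately; the forward implication is routine, and essentially all the difficulty lives in the converse.

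\medskip

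\noindent\emph{The direction $|\alpha|<\infty \Rightarrow (\diamond)$.} Suppose $|\alpha| = n < \infty$, so that $\alpha^n = \mathrm{id}$ and hence $\theta^n$ is central in $S$, since $\theta^n r = \alpha^n(r)\theta^n = r\theta^n$ for all $r \in R$. Then $R[\theta^n]$ is a commutative Noetherian subring of $S$ (a polynomial ring over $R$ in the central indeterminate $\theta^n$), and $S = \bigoplus_{i=0}^{n-1}\theta^i R[\theta^n]$ with $\theta^i R[\theta^n] = R[\theta^n]\theta^i$, so that $S$ is a finite normalizing extension of $R[\theta^n]$. By Matlis's theorem $R[\theta^n]$ satisfies $(\diamond)$, and Proposition \ref{reduction}(a) then yields $(\diamond)$ for $S$. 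This argument requires no hypothesis on $k$; it is just the specialisation of the PI mechanism underlying the implication $(b)\Rightarrow(a)$ of Theorem \ref{affine}.

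\medskip

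\noindent\emph{The direction $|\alpha|=\infty \Rightarrow \text{not } (\diamond)$: overall reduction.} Here the strategy is to manufacture, out of $R$ and $\alpha$, a one-dimensional $\alpha$-simple coefficient domain to which Proposition \ref{simplenotdiamond1} applies, and then to transport the resulting failure of $(\diamond)$ back to $S$. Concretely I would produce an $\alpha$-prime ideal $P$ of $R$ --- replacing $\alpha$ by a power $\alpha^n$ and $\theta$ by $\theta^n$ if necessary so that $P$ is genuinely prime, via Lemmas \ref{npower} and \ref{power} --- together with a $\bar\alpha$-stable Ore set $\mathcal{A}$ in $\overline{R} := R/P$, such that $\overline{R}\mathcal{A}^{-1}$ is an $\alpha$-simple Noetherian domain that is not a field and on which the induced automorphism still has infinite order. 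Granting this, Proposition \ref{simplenotdiamond1} shows that $(\overline{R}\mathcal{A}^{-1})[\theta;\bar\alpha]$ fails $(\diamond)$, producing a finite-length module whose injective hull is not locally Artinian; Lemma \ref{kenlem1} carries the non-local-Artinianness back across the localisation to $\overline{R}[\theta;\bar\alpha] = S/PS$; and Proposition \ref{reduction}(c) together with Lemma \ref{power} carries it back to $S$ itself. This is exactly the mechanism of Proposition \ref{old2.1} and Theorem \ref{primitivecase}(b); the novelty is to verify its hypotheses \emph{without} the uncountability assumption on $k$, which the linearity of $\alpha$ makes possible.

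\medskip

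\noindent\emph{Construction of $P$ and $\mathcal{A}$: the core.} The heart of the argument is this geometric construction, and here one exploits that $\alpha$ is linear. Since $\alpha$ has infinite order, some vector has infinite $\alpha$-orbit, so $\mathbb{A}^t_{\bar k}$ carries an $\alpha$-invariant subvariety on which the action has a dense orbit; taking $P$ to be a prime component of the vanishing ideal of such an orbit closure gives an $\alpha$-prime $P$ with $\dim\overline{R}\geq 1$ and $\bar\alpha$ of infinite order. I would then reduce to one of two one-dimensional prototypes. In the \emph{multiplicative} case a non-root-of-unity eigenvalue $q$ of $\alpha$ yields, as a localised quotient, the Laurent ring $k[x,x^{-1}]$ with $x\mapsto qx$; this is $\alpha$-simple and not a field, because any proper nonzero $\alpha$-stable ideal would be supported on a finite $\alpha$-invariant set of nonzero points, whereas the orbit $\{q^i a\}$ of any $a\neq 0$ is infinite. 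In the \emph{additive} case (necessarily $\mathrm{char}\,k = 0$, since in characteristic $p$ unipotent elements are torsion) a nontrivial Jordan block produces, on a non-invariant fibre, the polynomial ring $k[y]$ with the translation $y\mapsto y+c$, $c\neq 0$, which is again an $\alpha$-simple non-field. The reduction of a general infinite-order linear $\alpha$ to one of these proceeds by passing to a power to normalise the component structure; when $\alpha$ admits a nonconstant invariant, restricting to a general fibre of the invariant ring (a quotient, handled by Proposition \ref{reduction}(c)) so as to lower the dimension and recurse; and when the invariants are trivial, localising away the (proper, $\alpha$-stable) locus of invariant subvarieties to reach $\alpha$-simplicity.

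\medskip

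\noindent\emph{Main obstacle.} The principal difficulty is precisely this construction: showing that \emph{every} infinite-order $\alpha\in\mathrm{GL}(t,k)$ reduces, over the given and possibly countable field $k$, to one of the one-dimensional prototypes. The delicate points are (i) \emph{field of definition} --- the relevant eigenvalue $q$ or eigenvector may lie in $\bar k\setminus k$, so the semi-invariant cutting out the torus need not be defined over $k$, and one must instead work with a $k$-irreducible invariant subspace (via minimal polynomials or norms) to produce an honest $\alpha$-stable prime over $k$; (ii) the presence of nonconstant invariants, which foliates $\mathbb{A}^t$ into infinitely many invariant subvarieties, so that no single localisation can achieve $\alpha$-simplicity and one is forced onto a fibre while checking that $\bar\alpha$ still has infinite order there; and (iii) bookkeeping of which of a quotient, a localisation, or a passage to $\theta^n$ is the correct move in each configuration. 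It is exactly the availability of a uniform such argument, valid for all $t$ and all $k$, that makes the linear case tractable --- the eigenstructure organises everything --- and correspondingly signals why the general affine case remains open.
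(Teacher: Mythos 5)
Your forward implication is correct and is exactly the paper's argument: $R[\theta^n]$ is commutative Noetherian, hence satisfies $(\diamond)$ by Matlis, and Proposition \ref{reduction}(a) (equivalently Lemma \ref{power}(a)) lifts this to $S$. The converse, however, contains a genuine gap, and it sits precisely where you locate it yourself. The two one-dimensional prototypes you name (the quantum plane $x\mapsto qx$ with $q$ not a root of unity, and the translation $y\mapsto y+c$ in characteristic $0$) are exactly the ones the paper uses, and your transport machinery (Proposition \ref{simplenotdiamond1}, Lemma \ref{kenlem1} as packaged in Proposition \ref{old2.1}, Proposition \ref{reduction}(c), Lemma \ref{power}) is the right one. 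But the reduction of an arbitrary infinite-order $\alpha\in\mathrm{GL}(t,k)$ to these prototypes is the whole content of the proof, and your sketch of it --- orbit closures with dense orbits, recursion through fibres of the invariant ring, ``localising away the locus of invariant subvarieties'' --- leaves unverified exactly the points you then list as obstacles: that the relevant ideals are prime and defined over $k$, that the induced automorphism retains infinite order on the quotient, and that ``localising away'' what may be infinitely many invariant subvarieties is a legitimate Ore localisation at all. A proof cannot end by declaring its key step to be the principal difficulty; as written, the core construction is a plan, not an argument.

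What you are missing is the one idea that dissolves all three of your obstacles simultaneously: extend scalars. For any finite field extension $L/k$, the algebra $L\otimes_k S = L[x_1,\ldots,x_t][\theta;\alpha]$ is a finite normalizing extension of $S$, so Proposition \ref{reduction}(a) shows that $(\diamond)$ for $S$ forces $(\diamond)$ for $L\otimes_k S$; hence one may pass at once to a finite extension $L$ over which $\alpha$ has Jordan normal form, and there is no field-of-definition problem to fight. After that no geometry is needed. If some generalised eigenvalue $a_i$ is not a root of unity, then (taking $x_i$ to be the last vector of its Jordan block) the coordinate ideal $P=\sum_{\ell\neq i}x_\ell\widehat{R}$ is $\alpha$-stable and $\widehat{S}/P\widehat{S}$ is a quantum plane with parameter $a_i$, which fails $(\diamond)$ by \cite[Theorem 3.1]{CarvalhoMusson} (or via Proposition \ref{old2.1}, since $L[x_i^{\pm 1}]$ is $\overline{\alpha}$-simple and not a field); so all eigenvalues are roots of unity. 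In characteristic $p$ this already places the matrix of $\alpha$ in $\mathrm{GL}(t,E)$ for a finite field $E$, giving $|\alpha|<\infty$. In characteristic $0$, after replacing $\alpha$ by a suitable power (Lemma \ref{power}(a)) so that its only eigenvalue is $1$, a nontrivial Jordan block yields the $\alpha$-stable ideal $I=\sum_{\ell\le t-2}x_\ell\widehat{R}+(x_{t-1}-1)\widehat{R}$ with $\widehat{S}/I\widehat{S}\cong L[x_t][\theta;\overline{\alpha}]$, $\overline{\alpha}(x_t)=x_t+1$, which fails $(\diamond)$ by \cite{Musson82} or by Proposition \ref{simplenotdiamond1}; hence $\alpha$ is diagonalisable with root-of-unity eigenvalues, i.e.\ of finite order. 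Your insistence on working over $k$ itself is what creates the field-of-definition and recursion problems; they are artefacts of the approach, not essential difficulties, and the paper's base-change step removes them in one line.
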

\begin{proof} Suppose $|\alpha| = n < \infty$. Since the commutative Noetherian ring $R[\theta^n]$ satisfies $(\diamond)$, Lemma \ref{power}(a) implies that S satisfies $(\diamond)$.

For the converse, suppose that $S$ satisfies $(\diamond)$. Let $L$ be a finite extension field of $k$ such that the Jordan Normal Form of $\alpha$ exists in $GL(t,L)$. By Proposition \ref{reduction} $L \otimes_k S$ satisfies $(\diamond)$, so we may replace $R$ by $\widehat{R} = L[x_1, \ldots , x_t]$ and $S$ by $\widehat{S} := \widehat{R}[\theta; \alpha]$ in proving that $|\alpha| < \infty$. Changing the basis of $V$ as necessary, we can assume that $\alpha$ has Jordan Normal Form with respect to the ordered basis $\{x_1, \ldots , x_t\}$. We show first that
\begin{equation}\label{evalue} \textit{every generalised eigenvalue of } \alpha \textit{ is a root of unity in } L.
\end{equation}
Suppose that (\ref{evalue}) is false, so there exists $i$ such that the generalised eigenvalue $a_i$ of $x_i$ is not a root of $1$.
Choose $i$ and $j\leq i$ so that $\{x_j,\ldots,x_i\}$ is a basis of the Jordan block of $\alpha$ for the eigenvalue $a_i$.
Then it is clear that
$$ P \quad := \quad \sum_{\ell \neq i} x_{\ell}\widehat{R} $$
is an $\alpha$-invariant ideal of $\widehat{R}$. Thus $P\widehat{S} \lhd \widehat{S}$, with
$$ \widehat{S}/P\widehat{S} \cong (\widehat{R}/P)[\theta; \overline{\alpha}] = L[x_i][\theta ; \overline{\alpha}],$$
slightly abusing notation by writing $x_i$ for the image of that element of $\widehat{R}$ in $\widehat{R}/P$, and where $\overline{\alpha}$ is the $L$-algebra automorphism of $L[x_i]$ defined by $\overline{\alpha}(x_i) = a_i x_i.$ Thus $\widehat{S}/P\widehat{S}$ is a quantum plane with parameter $a_i$ and does not satisfies $(\diamond)$ by \cite[Theorem 3.1]{CarvalhoMusson}. This contradicts the hypothesis that $S$ and therefore $\widehat{S}$ satisfy $(\diamond)$, so (\ref{evalue}) is proved. (This can also be seen with the help of Proposition \ref{old2.1}. Indeed one checks easily that $x_i$ is an $\overline{\alpha}$-special element of $\widehat{R}/P$, with corresponding $\overline{\alpha}$-invariant Ore set $\mathcal{A}$ in $\widehat{R}/P$ equal to $\{a_i^m x_i^n : m \in \mathbb{Z}, \, n \in \mathbb{N} \}$. Hence $\widehat{S}/P\widehat{S}$ is primitive and since $(\widehat{R}/P)\mathcal{A}^{-1}$ is not a field, we conclude from the proposition that $\widehat{S}/P\widehat{S}$ does not satisfy $(\diamond)$.)

Notice also that this completes the proof of the proposition in the case where $k$, or equivalently $L$, has positive characteristic, since then the matrix of $\alpha$ lies in $GL(t, E)$ for some finite field $E$.

Now suppose $L$ has characteristic $0$, and that $\alpha$ is \emph{not} diagonalisable, so that there is a Jordan block of $\alpha$ of cardinality greater than $1$. Let $n$ be the least common multiple of the orders of the generalised eigenvalues of $\alpha$. Replacing $\widehat{S}$ by $\widehat{R}[\theta^n; \alpha^n]$, using Lemma \ref{power}(a), we may assume that the only generalised eigenvalue of $\alpha$ is 1. We aim for a contradiction to the hypothesis that $\widehat{S}$ satisfies $(\diamond)$. Rearranging the order of the basis vectors of $V$ if need be, we can assume that there is a block of $\alpha$ with basis $\{x_m, \ldots , x_t\}$, for some $m < t$. Consider now the $\alpha$-invariant ideal of $\widehat{R}$,
$$ I \quad := \quad \sum_{\ell = 1}^{t-2}x_{\ell}\widehat{R} + (x_{t-1} - 1)\widehat{R}. $$
Passing to $\widehat{S}/I\widehat{S} \cong (\widehat{R}/I)[\theta; \alpha] \cong L[x_t][\theta; \overline{\alpha}]$, we may assume that
\begin{equation}\label{Muss} \overline{\alpha} : x_t \mapsto x_t + 1.
\end{equation}
 One readily checks that $\widehat{S}/I\widehat{S}$ is isomorphic to the enveloping algebra of the 2-dimensional solvable non-abelian Lie algebra over a field of characteristic 0, so it does \emph{not} satisfy $(\diamond)$, by \cite{Musson82}; alternatively, since $L$ has characteristic 0, $\widehat{R}/I$ is $\alpha$-simple and one can appeal directly to Proposition \ref{simplenotdiamond1} to get a contradiction to property $(\diamond)$ for $\widehat{S}$. Therefore, $\alpha$ is diagonalisable; coupling this with (\ref{evalue}), the proposition is proved.
\end{proof}

\section{Examples and discussion}\label{examples}

We discuss in this section property $(\diamond)$ for $S = R[\theta; \alpha]$ when the base ring $R$ is an integral domain and an affine $k$-algebra over a field $k$, and $\alpha$ is a $k$-algebra automorphism of $R$. In the light of Theorem \ref{affine} and Proposition \ref{poly}, this amounts to examining the interconnections between the following four hypotheses:
\begin{enumerate}
\item[$(\bullet)$] $S$ satisfies $(\diamond)$.
\item[$(\bullet)$] Every simple $S$-module has finite dimension over $k$.
\item[$(\bullet)$] $S$ satisfies a polynomial identity.
\item[$(\bullet)$] $\alpha$ has finite order.
\end{enumerate}
We have seen in $\S 6$ that the first two of these are equivalent, that the third and fourth are equivalent, by \cite{DamianoShapiro}, and that the third implies the second, by Kaplansky's theorem, \cite[Theorem I.13.3]{BrownGoodearl}. Moreover all four are equivalent when $\alpha$ is a linear automorphism of a polynomial algebra, by Proposition \ref{poly}.

We first consider, in $\S$\ref{1poly} and $\S$\ref{2poly}, the application of Theorem \ref{affine} to the algebras $S_{k,n,\alpha}:= k[x_1, \ldots , x_n][\theta; \alpha]$, where $k$ is a field, $n$ is a positive integer, and $\alpha$ is as a $k$-algebra automorphism of $k[x_1, \ldots , x_n]$. Here, we will only consider $n=1$ and $n=2$, but even for these small values of $n$ the situation turns out to be surprisingly delicate. The representation theory of $S_{k,n,\alpha}$ likely has close interactions with the dynamical properties of $\alpha$, as studied for example in \cite{FM} and subsequent works; see for instance \cite{Raluca}. Even for the case $n=2$ we are unable to determine whether the first bullet point above is equivalent to the third and fourth.

We then turn in $\S$\ref{groupring} to algebras $S=R[\theta;\alpha]$ occurring as subalgebras of group algebras $kG$ of torsion-free polycyclic groups $G$, where $k$ is algebraic over a field of $p$ elements, $p$ prime. Here, thanks to deep results on the representation theory of these group algebras, we can easily construct many examples where the first two of the above bullet points hold, but the second two do not.

It remains to the best of our knowledge an open question at this point whether the four bullet points above are equivalent when $k$ has characteristic 0. In the final subsection, $\S$\ref{Jordan}, we describe an example due to Jordan (cf.\cite{Jordan}), which would, if correct, show that there is no equivalence of the four bullet points for affine algebras in characteristic 0; but unfortunately there appears to be a gap in the analysis of this example in \cite{Jordan}.

\subsection{$(\diamond)$ for $S_{k,1,\alpha}$}\label{1poly}
Here $k$ can be an arbitrary field. As is easy to confirm, the group $A$ of $k$-algebra automorphisms of $k[x]$ consists of the affine automorphisms, mapping $x$ to $\beta x + \gamma$, for $\beta,\gamma \in k$ with $\beta$ non-zero.

\begin{proposition}\label{dimone} With the above notation, the following are equivalent.
\begin{enumerate}
\item $S_{k,1,\alpha}$ satisfies $(\diamond)$;
\item every simple $S_{k,1,\alpha}$-module is finite dimensional over $k$;
\item $S_{k,1,\alpha}$ is a finite module over its centre;
\item either (i) $k$ has characteristic 0, $\beta$ is a root of unity and $\gamma = 0$; or (ii) $k$ has positive characteristic $p$ and $\beta$ is a root of unity.
\end{enumerate}
\end{proposition}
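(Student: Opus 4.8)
The plan is to prove the four conditions equivalent by running the cycle $(a)\Rightarrow(d)\Rightarrow(c)\Rightarrow(b)\Rightarrow(a)$, taking full advantage of the fact that the $k$-algebra automorphisms of $k[x]$ are completely explicit, $\alpha(x)=\beta x+\gamma$. Since $k$ is allowed to be arbitrary here, I cannot invoke Theorem \ref{affine}, whose hard direction needs $k$ uncountable; every implication must be established directly for this one-variable family. The organising device is a change of coordinates putting $\alpha$ into one of two normal forms. When $\beta\neq 1$, the affine map $\alpha$ has the fixed point $x_{0}=\gamma/(1-\beta)\in k$, and replacing $x$ by $y=x-x_{0}$ gives $\alpha(y)=\beta y$; one checks directly that this is an isomorphism of skew polynomial rings, so $S\cong k[y][\theta;\alpha]$ is a quantum plane with parameter $\beta$. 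When $\beta=1$ and $\gamma\neq 0$, rescaling $x$ lets us take $\gamma=1$, so that $\alpha:x\mapsto x+1$ is the ``shift''. Thus, up to isomorphism, only the scaling case ($\gamma=0$) and the shift case ($\beta=1$) occur, and the governing invariant throughout is the order $|\alpha|$: finite order is exactly condition $(d)$ after this normalisation (scaling by a root of unity, or, in characteristic $p$, the shift, which has order $p$), the shift in characteristic $0$ being precisely what is excluded.

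For the three routine implications I would argue as follows. For $(d)\Rightarrow(c)$: in either case of $(d)$ the automorphism $\alpha$ has finite order $n$, so $\theta^{n}$ is central and $S$ is free of rank $n$ over $R[\theta^{n}]$, while $R=k[x]$ is module-finite over the invariant ring $k[x]^{\langle\alpha\rangle}\subseteq Z(S)$ by Noether's theorem; hence $S$ is module-finite over its centre. For $(c)\Rightarrow(b)$: a ring that is module-finite over its (commutative) centre is affine PI, so by Kaplansky's theorem \cite[Theorem I.13.3]{BrownGoodearl}, applied to the primitive ideal $\mathrm{Ann}_{S}(V)$, every simple $S$-module $V$ is finite-dimensional over $k$. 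For $(b)\Rightarrow(a)$: if every simple $V$ is finite-dimensional over $k$, then $S/\mathrm{Ann}_{S}(V)\hookrightarrow\mathrm{End}_{k}(V)$ is Artinian, and since $S$ satisfies the strong second layer condition by Proposition \ref{ARsep}(a), Proposition \ref{sslc} shows that every finitely generated essential extension of $V$ is Artinian, i.e. $(\diamond)$ holds. This last step uses no cardinality hypothesis on $k$, exactly as in the remark following Theorem \ref{affine}.

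The substantive step, and the one I expect to be the main obstacle, is $(a)\Rightarrow(d)$, which I would prove in contrapositive form: if $|\alpha|=\infty$ then $S$ fails $(\diamond)$. After the normalisation above, infinite order leaves exactly two configurations. In the scaling case $\alpha:x\mapsto\beta x$ with $\beta$ not a root of unity, $S$ is the quantum plane, which does not satisfy $(\diamond)$ by \cite[Theorem 3.1]{CarvalhoMusson}; alternatively $x$ is an $\alpha$-special element with $\alpha$-invariant Ore set $\mathcal{A}=\{\beta^{i}x^{n}\}$, so $S$ is primitive by Theorem \ref{JurekLeroy}, and since $R\mathcal{A}^{-1}=k[x,x^{-1}]$ is not a field, Proposition \ref{old2.1} applies. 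In the shift case in characteristic $0$, no non-constant $f$ satisfies $f(x+1)=cf(x)$, so $k[x]$ is $\alpha$-simple; being a domain but not a field, Proposition \ref{simplenotdiamond1} applies directly, recovering the failure of $(\diamond)$ for the enveloping algebra of the two-dimensional non-abelian solvable Lie algebra \cite{Musson82}, to which $S$ is then isomorphic.

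The delicate points to get right are the exhaustiveness and bookkeeping of the coordinate reduction: verifying that the translation $y=x-x_{0}$ genuinely intertwines the two skew multiplications, and that it licenses normalising $\gamma=0$ whenever $\beta\neq 1$, so that the characteristic-sensitive phenomenon is isolated entirely in the shift case $\beta=1$ (which is what reconciles the ``$\gamma=0$'' appearing in $(d)(i)$ with the apparently larger finite-order locus). The remaining care is in checking the hypotheses of the invoked non-$(\diamond)$ results, namely that $R\mathcal{A}^{-1}$ fails to be a field in the scaling case and that $R=k[x]$ is $\alpha$-simple in the shift case. Everything else reduces either to explicit linear algebra over $k$ or to direct appeals to the machinery of $\S\S 2$--$5$.
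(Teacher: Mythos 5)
Your proposal is sound in substance and follows essentially the same route as the paper's own proof: the chain (d)$\Rightarrow$(c)$\Rightarrow$(b)$\Rightarrow$(a) via Noether's theorem on invariants, Kaplansky's theorem \cite[Theorem I.13.3]{BrownGoodearl}, and Propositions \ref{ARsep} and \ref{sslc}; and the hard direction (a)$\Rightarrow$(d) by splitting into the scaling case, handled by the quantum-plane result \cite[Theorem 3.1]{CarvalhoMusson} (or, as you note, by Theorem \ref{JurekLeroy} plus Proposition \ref{old2.1}), and the characteristic-zero shift case, handled by $\alpha$-simplicity of $k[x]$ and Proposition \ref{simplenotdiamond1} (or \cite{Musson82}). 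The paper carries out exactly these steps, citing the same results; the only structural difference is that you conjugate $\alpha$ into the normal forms $x\mapsto\beta x$ or $x\mapsto x+1$ at the outset, whereas the paper manipulates the parameters $\beta,\gamma$ directly.

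That one difference deserves comment, because it exposes a point your write-up flags (``reconciles'') but does not fully resolve: condition (d)(i) as literally stated is \emph{not} invariant under your change of coordinates, while (a), (b), (c) are isomorphism-invariant. Concretely, take $\mathrm{char}\,k=0$ and $\alpha(x)=-x+1$; then $\alpha$ has order $2$ (conjugating by $y=x-\tfrac12$ gives $y\mapsto -y$), so (a), (b), (c) all hold, yet (d) fails since $\gamma=1\neq 0$. Thus what your argument actually establishes is the equivalence of (a), (b), (c) with $|\alpha|<\infty$, i.e.\ with (d) corrected to: ``either $\beta$ is a root of unity different from $1$, or $\beta=1$ and ($\gamma=0$ or $\mathrm{char}\,k=p$)''. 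This is a defect of the proposition's statement rather than of your reasoning: the paper's own proof stumbles at the same point, since it asserts that in characteristic $0$ any $\gamma\neq 0$ contradicts $(\diamond)$ ``as in the argument to deal with (\ref{Muss})'', but that reduction to the shift requires $\beta=1$; when $\beta\neq 1$ the map is conjugate to a scaling and may have finite order, so no contradiction is available. Your normalization-first strategy is the more careful one; you should simply state explicitly that you are proving the conjugation-invariant (finite-order) form of (d), since the literal form is false.
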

\begin{proof} The implications (d)$\Rightarrow$(c)$\Rightarrow$(b)$\Rightarrow$(a) are standard. Indeed suppose (d) holds. Then $\alpha$ is of finite order and by Noether's Theorem \cite[Theorem 2.3.1]{LSmith}, (c) follows easily. Moreover $S$ satisfies a polynomial identity and by Kaplansky's Theorem \cite[Theorem I.13.3]{BrownGoodearl} we have that (c)$\Rightarrow$(b). The implication (b)$\Rightarrow$(a) follows as in Theorem \ref{affine}.

(a)$\Rightarrow$(d) Suppose that $S_{k,1,\alpha}$ satisfies $(\diamond)$.

First, assume that $k$ has characteristic 0. Then, if $\gamma$ is not 0 we can argue as in the argument to deal with (\ref{Muss}) in the proof of Theorem \ref{affine}, to get a contradiction. So $\gamma = 0$ and we are in the quantum plane setting which arose in proving (\ref{evalue}). By \cite[Theorem 3.1]{CarvalhoMusson} it follows that $\beta$ is a root of unity.

Finally, suppose that $k$ has positive characteristic $p$. Replacing $S_{k,1,\alpha}$ by $T := k[x][\theta^p; \alpha^p]$ and appealing to Proposition \ref{reduction}(a), we may assume that $\gamma = 0$. So $T$ is a quantum plane and $\beta^p$ is a root of unity as above, completing the proof.
\end{proof}

\subsection{$(\diamond)$ for $S_{\C,2,\alpha}$}\label{2poly}
Consider here $S_{\C,2, \alpha} = \C[x,y][\theta; \alpha]$. Let $A := \mathrm{Aut}({\C}[x,y])$, the group of $\C$-algebra automorphisms of $R := {\C}[x,y]$, and let $\alpha \in A$. Following \cite[Proposition 1]{MSmith}, $\alpha$ is \emph{triangular} if it is conjugate in $A$ to one of the following types of automorphism:
\begin{enumerate}
 \item[(i)] $x\mapsto \lambda x; \quad y\mapsto \mu y, \quad \lambda, \mu \in \C\setminus \{0\}$;
 \item[(ii)] $x\mapsto \lambda x; \quad y\mapsto y+c,\quad \lambda,c\in \C\setminus \{0\} $;
 \item[(iii)] $x\mapsto \lambda x+ \displaystyle\sum_{\{i:\lambda=\mu^i\}} \eta_i y^i; \quad y\mapsto \mu y,\quad \eta_i\in{\C}, \,  \lambda, \mu \in \C\setminus \{0\}$.
\end{enumerate}

Note that if $\xi \in A$, then $\xi$ extends to an isomorphism from $R[\theta;\alpha]$ to $R[\theta; \xi\alpha\xi^{-1}]$. Thus, throughout the arguments below, we may where convenient replace a hypothesis that $\alpha$ is conjugate to an element of type (j) by the hypothesis that $\alpha$ belongs to type (j). A similar comment applies also to the later proofs in this subsection.

\begin{lemma}\label{triangular} With the notation as above, suppose that $\alpha$ is triangular. Then the following are equivalent:
\begin{enumerate}
\item $S_{\C,2,\alpha}$ satisfies $(\diamond)$;
\item $\alpha$ is conjugate to an automorphism of type (i), with $\lambda$ and $\mu$ both roots of unity;
\item $\alpha$ has finite order;
\item $S_{\C,2,\alpha}$ is a finite module over its centre;
\item every simple $S_{\C,2,\alpha}$-module is a finite dimensional $k$-vector space.
\end{enumerate}
\end{lemma}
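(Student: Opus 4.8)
The plan is to establish the easy chain (b)$\Rightarrow$(c)$\Rightarrow$(d)$\Rightarrow$(e), to observe that (a)$\Leftrightarrow$(e) holds directly by Theorem~\ref{affine} (as $\C$ is an uncountable affine coefficient field), and thereby reduce the whole statement to the single substantial implication (a)$\Rightarrow$(b). The easy chain is essentially contained in the proofs of Theorem~\ref{affine} and Proposition~\ref{dimone}: a diagonal $\alpha$ with root-of-unity eigenvalues has finite order, and conjugation preserves the order, giving (b)$\Rightarrow$(c); a finite-order $\alpha$ makes $R$ a finite module over the fixed ring $R^{\langle\alpha\rangle}$ by Noether's theorem, whence $S$ is a finite module over the central subring $R^{\langle\alpha\rangle}[\theta^{|\alpha|}]$, giving (c)$\Rightarrow$(d); and a ring that is module-finite over its centre is PI, so Kaplansky's theorem forces all simple modules to be finite dimensional over $\C$, giving (d)$\Rightarrow$(e).

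Everything thus reduces to (a)$\Rightarrow$(b). Since replacing $\alpha$ by a conjugate changes $S$ only up to isomorphism and hence preserves $(\diamond)$, I may assume $\alpha$ is literally of type (i), (ii) or (iii). The two building blocks are: a quantum plane $\C[z][\theta;\sigma]$ with $\sigma(z)=qz$ satisfies $(\diamond)$ if and only if $q$ is a root of unity (\cite[Theorem 3.1]{CarvalhoMusson}); and a shift algebra $\C[z][\theta;\sigma]$ with $\sigma(z)=z+c$, $c\neq0$, fails $(\diamond)$ by Proposition~\ref{simplenotdiamond1}, since in characteristic $0$ the domain $\C[z]$ is $\sigma$-simple and not a field. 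Together with the factor principle Proposition~\ref{reduction}(c), the second building block disposes of type (ii) immediately: the ideal $x\C[x,y]$ is $\alpha$-invariant and $S/xS\cong\C[y][\theta;\overline{\alpha}]$ with $\overline{\alpha}(y)=y+c$ fails $(\diamond)$, contradicting $(\diamond)$ for $S$.

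For types (i) and (iii) write $\alpha(y)=\mu y$ and $\alpha(x)=\lambda x+P(y)$, where $P(y)=\sum_{\{i:\mu^i=\lambda\}}\eta_iy^i$ (with $P=0$ in type (i)), and extract the eigenvalue constraints from $\alpha$-invariant ideals. The ideal $yR$ is always $\alpha$-invariant, and $S/yS\cong\C[x][\theta;\overline{\alpha}]$ with $\overline{\alpha}(x)=\lambda x+P(0)$: if $\lambda\neq1$ there is no constant term, so this is a quantum plane of parameter $\lambda$ and $(\diamond)$ forces $\lambda$ to be a root of unity; if $\lambda=1$ then $P(0)$ must vanish (otherwise we obtain a shift algebra), and $\lambda$ is trivially a root of unity. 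Hence $\lambda$ is a root of unity and $P(0)=0$ in every case. For $\mu$: if $P=0$ then $xR$ is $\alpha$-invariant with $S/xS$ a quantum plane of parameter $\mu$, forcing $\mu$ to be a root of unity; if $P\neq0$ then, since $P(0)=0$, there is an index $i_0\geq1$ with $\mu^{i_0}=\lambda$, and as $\lambda$ is a root of unity so is $\mu$.

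The main obstacle is the surviving case: $\alpha$ of type (iii) with $\lambda$ and $\mu$ both roots of unity but $P\neq0$. This $\alpha$ has infinite order, is not conjugate to any diagonal automorphism, and is invisible to the two invariant ideals above. Here I would exploit the defining support condition $\mu^i=\lambda$, which yields the identity $P(\mu y)=\lambda P(y)$ and hence, by an easy induction, $\alpha^n(x)=\lambda^n x+n\lambda^{n-1}P(y)$. Writing $d:=|\mu|$ and using $\lambda^d=\mu^{i_0d}=1$, we get that $\alpha^d$ fixes $y$ and sends $x\mapsto x+d\lambda^{d-1}P(y)$, a nontrivial shear because $d\lambda^{d-1}\neq0$ in characteristic $0$. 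By Lemma~\ref{power}(a), $(\diamond)$ for $S$ is equivalent to $(\diamond)$ for $R[\theta^d;\alpha^d]$; specialising at the $\alpha^d$-invariant ideal $(y-c)R$ for a scalar $c$ with $P(c)\neq0$ produces the shift algebra $\C[x][\theta^d;\psi]$ with $\psi(x)=x+d\lambda^{d-1}P(c)$, which fails $(\diamond)$. This contradiction forces $P=0$, so $\alpha$ is diagonal with root-of-unity eigenvalues, which is (b). The delicate points are the bookkeeping of the support of $P$ (in particular, that $P(0)=0$ forces the index $i_0$ to be positive) and the essential use of characteristic $0$ in $d\lambda^{d-1}\neq0$ --- exactly the feature making this shear of infinite order.
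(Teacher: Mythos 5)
Your proposal is correct and follows essentially the same route as the paper: both reduce everything to (a)$\Rightarrow$(b), dispose of type (ii) and of non-diagonal type (iii) by passing to one-variable quotients (quantum planes and shift algebras) via Proposition~\ref{reduction}(c) and the one-variable results, and settle the remaining hard case by passing to a power of $\alpha$ that is a pure shear in $x$ and then specialising $y$ at a point where the shear polynomial is non-zero to obtain a shift algebra, contradicting $(\diamond)$. If anything, your bookkeeping is slightly more complete than the paper's, which never explicitly treats type (i) itself and glosses over the possibility $P(0)\neq 0$ when $\lambda=1$.
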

\begin{proof}

The implication (b)$\Rightarrow$(c) is clear and  (c)$\Rightarrow$(d)$\Rightarrow$(e) follow as in the first paragraph of the proof of Proposition \ref{dimone}. Finally, (e)$\Rightarrow$(a) is a special case of (b)$\Rightarrow$(a) of Theorem \ref{affine}.

To prove (a)$\Rightarrow$(b) suppose that $S_{\C,2,\alpha}$ satisfies $(\diamond)$.  If $\alpha$ is of type (ii),  then $xS_{\C,2,\alpha}$ is an ideal of $S_{\C,2,\alpha}$ and $S_{\C,2,\alpha}/xS_{\C,2,\alpha}$ is isomorphic to ${\Bbb C}[y][\theta; \beta]$ where $\beta(y)=y+c$ for $c\neq 0$. By Proposition \ref{dimone} and Proposition \ref{reduction}(c), $S_{\C,2,\alpha}$ does not satisfy $(\diamond)$, contradicting our hypothesis.

Suppose alternatively that $\alpha$ is of type (iii) but not type (i). Since $yS_{\C,2,\alpha}$ is an ideal of $S_{\C,2,\alpha}$ and $S_{\C,2,\alpha}/yS_{\C,2,\alpha}$ is isomorphic to a quantum plane with parameter $\lambda$.   By Propositions \ref{dimone} and \ref{reduction}(c), $\lambda$ is a root of unity. Since $S_{\C,2,\alpha}$ is by hypothesis \emph{not} type (i), this forces $\mu$ also to be a root of unity. By Proposition \ref{reduction} the subalgebra $\C[x,y][\theta^n ; \alpha^n]$ of $S_{\C,2,\alpha}$ also satisfies $(\diamond)$, so we may pass to that subalgebra for a suitable choice of $n$, and thus assume that $\mu=1=\lambda$, (but keep the notation $S_{\C,2,\alpha} = \C[x,y][\theta; \alpha]$). Let $\tau \in \C$ be a root of the polynomial $\displaystyle\sum_{\lambda=\mu^i} \eta_i y^i-1$. Then $(y - \tau)S_{\C,2,\alpha}$ is an ideal of $S_{\C,2,\alpha}$, with $S_{\C,2,\alpha}/(y-\tau)S_{\C,2,\alpha}$ isomorphic to ${\Bbb C}[x][\theta;\overline{\alpha}]$ and $\overline{\alpha}(x)=x+1$. A contradiction once again follows from Propositions \ref{dimone} and \ref{reduction}(c).
\end{proof}

We turn now to a consideration of those automorphisms of $\C[x,y]$ which are not triangular. Again following \cite{MSmith}, an element $\alpha$ of $A$ is called \emph{square} if it is not conjugate to a triangular automorphism, and, following \cite{FM}, $\alpha$ is a \emph{generalised H$\acute{e}$non automorphism} if
$$\alpha(x)= y; \qquad  \alpha(y )= \lambda x + \beta(y)$$
for some $\lambda \in \C\setminus \{0\}$ and $\beta (y) \in \C[y]$, with $\beta(y)$ having degree at least two\footnote{This degree condition is mistakenly omitted from the definition in \cite[page 368]{Jordan}, although it is present in \cite[Definition 2.5]{FM} The condition omitted by \cite{Jordan} is definitely needed for the validity of some of the results in \cite{FM}.}. By \cite[Theorem 2.6]{FM}, every square element of $A$ is conjugate to a product of generalised H$\acute{\mathrm{e}}$non automorphisms\footnote{\cite[Theorem 2.6]{FM} is slightly mis-stated, since the cyclically reduced automorphisms which it treats include all the square automorphisms, but also the affine automorphisms which are \emph{not} also elementary, such as, for example the map $\tau: x\mapsto y, \, y\mapsto x$. See the definitions on pages 68-69 of \cite{FM}. In fact, \cite[Theorem 2.6]{FM} is valid, in the terminology of that paper, for the elements of $A$ having degree at least 2. }.

 Let us examine first the prime spectrum of $S_{\C,2,\alpha}$ when $\alpha$ is square. We follow initially in the proof of the following lemma the argument of \cite[proof of Proposition 7.8, first paragraph]{Jordan}.

\begin{lemma}\label{square} Let $\alpha$ be a square automorphism of $\C[x,y]$. Then $\mathrm{Spec}(S_{\C,2,\alpha})$ is the disjoint union of $\mathcal{V}(\theta) := \{ P  \in \mathrm{Spec}(S_{\C,2,\alpha}): \theta \in P \}$, and $\mathcal{C}(\theta) := \mathrm{Spec}(S_{\C,2,\alpha}) \setminus \mathcal{V}(\theta)$, and these sets have the following descriptions:
\begin{enumerate}
\item $\mathcal{V}(\theta)$ is homeomorphic to $\mathrm{Spec}(\C[x,y])$.
\item $\mathcal{C}(\theta)$ is homeomorphic to $\mathrm{Spec}(T_{\C,2,\alpha})$, where $T_{\C,2,\alpha}:= \C[x,y][\theta^{\pm 1}; \alpha]$.
\item $\mathrm{Spec}(T_{\C,2,\alpha})$ is partitioned into the following three disjoint subsets.
\begin{enumerate}
\item[(i)] An uncountable set of co-Artinian maximal ideals $\{M_{j,\lambda}: j \in \mathbb{N}, \lambda \in \C\}$, all of height 2;
\item[(ii)] A countably infinite set of height one mutually comaximal prime ideals $\{P_j : j \in \mathbb{N} \}$, with $$\bigcap_{j \in \mathbb{N}}P_j \quad = \quad (0);$$
\item[(iii)] $(0).$
\end{enumerate}
\item For each $j \in \mathbb{N}$, there exists a positive integer $n_j$ and a finite $\langle \alpha \rangle$-orbit $\{Q_{j,0}, Q_{j,1} = \alpha(Q_{j,0}),\ldots , Q_{j, n_j - 1} = \alpha^{n_j-1}(Q_{j,0})\}$ of maximal ideals of $\C [x,y]$, such that
    $$ P_j \quad = \quad (\bigcap_{\ell = 0}^{n_{j}-1} Q_{j,\ell})T_{\C,2,\alpha}. $$
\item For $j \in \mathbb{N}$, denote the prime ideal $P_j \cap S_{\C,2,\alpha}$ of $S_{\C,2,\alpha}$ by $P_j'$. Thus $P_j' = (\bigcap_{\ell = 0}^{n_{j}-1} Q_{j,\ell})S_{\C,2,\alpha}$. For each $j \in \mathbb{N}$, (writing $\theta$ also for the image of $\theta$ in $S_{\C,2,\alpha}/P_j'$ and in $T_{\C,2,\alpha}/P_j$),
    \begin{eqnarray}\label{equationJ} (S_{\C,2,\alpha}/P_j')[\theta^{-1}] = T_{\C,2,\alpha}/P_j \quad \cong M_{n_j}(\C[\theta^{\pm n_j}]). \end{eqnarray}
    In particular, $T_{\C,2,\alpha}/P_j$ is an Azumaya algebra over its centre $\C[\theta^{\pm n_j}]$, so the maximal ideals of $T_{\C,2,\alpha}/P_j$ are parametrised by $\C$, and are the ideals
    $$ \{ M_{j, \lambda}/P_j : \lambda \in \C \}. $$
    Hence, for all $j \in \mathbb{N}$ and $\lambda \in \C$,
    $$ T_{\C,2,\alpha}/M_{j, \lambda} \quad \cong \quad M_{n_j}(\C). $$
\item The integers $n_j$, for $j \in \mathbb{N}$, are unbounded; more precisely, every sufficiently large prime number occurs amongst the $n_j$.
\end{enumerate}
\end{lemma}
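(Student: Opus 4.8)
The plan is to translate part (f) into a statement about the periodic points of the polynomial automorphism $\alpha$ acting on $\C^2$, and then to count them. By parts (c)(ii) and (d) the height one primes $P_j$ of $T_{\C,2,\alpha}$ are in bijection with the finite $\langle\alpha\rangle$-orbits of maximal ideals of $\C[x,y]$, the integer $n_j$ being the size of the corresponding orbit. Since $\C$ is algebraically closed, the Nullstellensatz identifies $\maxspec \C[x,y]$ with $\C^2$ and an orbit of size $n_j$ with a set of $n_j$ distinct points cyclically permuted by $\alpha$, that is, with a point of $\C^2$ of minimal period $n_j$ for the map $\alpha\colon \C^2 \to \C^2$. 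Thus it suffices to show that, for every sufficiently large prime $p$, the automorphism $\alpha$ has a point of minimal period exactly $p$; unboundedness of the $n_j$ is then immediate.

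First I would reduce to a normal form. As $\alpha$ is square, \cite[Theorem 2.6]{FM} lets me replace $\alpha$ by a conjugate which is a composition of generalised H\'enon automorphisms; by the isomorphism $S_{\C,2,\alpha} \cong S_{\C,2,\xi\alpha\xi^{-1}}$ recorded before Lemma \ref{triangular}, and since conjugation preserves the cycle structure, this substitution is harmless. For such a composition the algebraic degrees multiply under iteration, so $\alpha$ has a degree $d = \deg(\alpha)\geq 2$ with $\deg(\alpha^n) = d^n$ for all $n\geq 1$. The key quantitative input, taken from the dynamical theory of H\'enon maps in \cite{FM}, is that for each $n\geq 1$ the map $\alpha^n$ has exactly $d^n$ fixed points in $\C^2$, counted with multiplicity (no fixed points escape to infinity, by the regularity at infinity of compositions of H\'enon maps).

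Now fix a prime $p$ and write $\nu_p(x)$ for the multiplicity of $x$ as a fixed point of $\alpha^p$. Because $p$ is prime, every fixed point of $\alpha^p$ is either fixed by $\alpha$ or has minimal period exactly $p$, so
\[ d^p \;=\; \sum_{x:\ \alpha^p(x)=x}\nu_p(x) \;=\; \sum_{x:\ \alpha(x)=x}\nu_p(x) \;+\; \sum_{x:\ \mathrm{per}_\alpha(x)=p}\nu_p(x). \]
The $\alpha$-fixed points are finite in number ($d$ with multiplicity), and I claim their total contribution $\sum_{\alpha(x)=x}\nu_p(x)$ stays bounded as $p\to\infty$. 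Indeed the eigenvalues of the finitely many differentials $D\alpha_x$ at the $\alpha$-fixed points form a fixed finite set; for all $p$ exceeding the orders of those eigenvalues which are roots of unity, $D\alpha^p_x - I$ is either invertible (multiplicity $1$) or, in the resonant directions, $\alpha^p$ is a parabolic germ whose fixed-point multiplicity coincides with that of the corresponding power of $\alpha$ --- here the characteristic-zero hypothesis is essential. Hence each $\nu_p(x)$ is eventually constant in $p$, so $\sum_{\alpha(x)=x}\nu_p(x)\leq C$ for a constant $C$ independent of $p$. Since $d\geq 2$ gives $d^p > C$ for all large $p$, the minimal-period-$p$ term is strictly positive, producing a point of minimal period $p$, hence an $\alpha$-orbit of maximal ideals of size $p$ and a height one prime $P_j$ with $n_j = p$.

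The main obstacle is precisely this multiplicity bookkeeping at the $\alpha$-fixed points: a priori all $d^p$ fixed points of $\alpha^p$ could be concentrated, with large multiplicity, at the handful of $\alpha$-fixed points, leaving no genuine period-$p$ orbit. Ruling this out is what forces the combination of the finiteness of the eigenvalue data with the characteristic-zero stabilisation of parabolic multiplicities. An alternative and cleaner route would be to invoke the stronger equidistribution results for saddle periodic points of H\'enon maps --- that $\alpha^n$ has $\sim d^n$ hyperbolic saddle points, each automatically of multiplicity one --- which would render the subtraction of the bounded fixed-point term transparent and dispense with the local germ analysis altogether.
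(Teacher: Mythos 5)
Your proposal addresses only part (f) of the lemma; parts (a)--(e) are nowhere proven, and indeed your argument for (f) explicitly takes (c)(ii) and (d) as given. The paper has to do real work for those parts: (a) and (b) follow from $S_{\C,2,\alpha}/\theta S_{\C,2,\alpha}\cong \C[x,y]$ and the fact that $T_{\C,2,\alpha}$ is the localisation of $S_{\C,2,\alpha}$ at the powers of $\theta$; the partition in (c) requires \cite[Theorem 3.1]{FM} for the countable infinitude of finite orbits, Lemma \ref{stable}, Lane's theorem \cite{L} that $\C[x,y]$ has no proper $\alpha$-stable principal ideal when $\alpha$ is square, and \cite[Theorem 4.3]{Irving79} to show that $(0)$ is the only prime of $T_{\C,2,\alpha}$ lying over $(0)$ in $\C[x,y]$; and the isomorphism in (e) is established via the matrix-ring recognition theorem \cite[Theorem 1.3]{AAR96}. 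None of this appears in your proposal, so as a proof of the stated lemma it is seriously incomplete.

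For part (f) itself there is a further, more interesting discrepancy: the paper does not prove (f) at all, but simply cites \cite[Corollary 8.6 and note added in proof, page 97]{FM}, whereas you attempt to re-derive that result dynamically. Your sketch has a genuine gap exactly where you admit "the main obstacle" lies: the claim that the total multiplicity $\sum_{\alpha(x)=x}\nu_p(x)$ of the $\alpha$-fixed points, as fixed points of $\alpha^p$, stays bounded as $p\to\infty$. Your handling of the resonant case ("$\alpha^p$ is a parabolic germ whose fixed-point multiplicity coincides with that of the corresponding power of $\alpha$") is an assertion, not an argument; in dimension two the behaviour of local fixed-point multiplicities of iterates of a holomorphic germ is delicate, and the boundedness you need is a theorem (a Shub--Sullivan-type statement on local indices of iterates, or its holomorphic refinement), not an elementary consequence of eigenvalue bookkeeping. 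Since that boundedness is the entire content of the passage from "$d^p$ fixed points of $\alpha^p$ counted with multiplicity" to "a point of minimal period exactly $p$", the proof of (f) is also incomplete as written. Your alternative route via counting saddle periodic points would indeed close the gap, but it invokes substantially heavier and later results than the Friedland--Milnor material on which the paper relies; the honest options are either to cite \cite{FM} as the paper does, or to cite the index-boundedness theorem explicitly.
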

\begin{proof} The partition of $\mathrm{Spec}(S_{\C,2,\alpha})$ into $\mathcal{V}(\theta)$ and $\mathcal{C}(\theta)$ is clear, and (a) and (b) follow at once since $S_{\C,2,\alpha}/\theta S_{\C,2,\alpha} \cong R$ and $T_{\C,2,\alpha}$ is the localization of $S_{\C,2,\alpha}$ with respect to powers of $\theta$.

By \cite[Theorem 3.1]{FM}, $\C[x,y]$ has countably infinitely many maximal ideals with a  finite $\langle \alpha \rangle$-orbit. Enumerating these orbits by the parameter  $j \in \mathbb{N}$, and letting $n_j$ be the size of the $j$th orbit, the integers $n_j$ satisfy (f), by \cite[Corollary 8.6 and note added in proof, page 97]{FM}. Labelling the $j$th finite $\langle \alpha \rangle$-orbit as in (d), and defining the corresponding induced ideal $P_j$ also as in (d), yields a countably infinite set of comaximal prime ideals of $T_{2,\alpha, \C}$ by Lemma \ref{stable}(b), giving the subset (c)(ii) of $\mathrm{Spec}(T_{\C,2,\alpha})$, see also \cite[Theorem III.31]{SamuelZariski}. The subset (c)(iii) is clear.

Let $I$ be any other prime ideal of $T_{2,\alpha,\C}$, so that $I$ is non-zero and $$I \cap \C[x,y] =(I \cap S_{\C,2,\alpha}) \cap \C[x,y]$$ is an $\alpha$-prime ideal of $\C[x,y]$ by Lemma \ref{stable}(c). If $I \cap \C[x,y]$ is co-Artinian, then, by construction, $I$ contains one of the primes $P_j$ constructed above. Otherwise,
\begin{equation}\label{noncase} I \cap \C[x,y] \quad = \quad (0),
\end{equation}
since $\C[x,y]$ has no $\alpha$-stable proper principal ideal, by \cite{L}; see \cite[note added in proof, page 572]{MSmith}.
Assume now that (\ref{noncase}) holds. By \cite[Theorem 4.3]{Irving79}, as $\alpha$ is of infinite order the only prime of $S_{\C,2,\alpha}$, not containing $\theta$, which lies over $(0)$ in $\C[x,y]$ is $(0)$. Thus $(0)$ is the only prime ideal of $T_{\C,2,\alpha}$ not containing one of the ideals $P_j$.

It remains to prove (e), in the course of which the maximal ideals of (c)(i) will be described.
It follows easily from Lemma \ref{stable}(b) that $T_{\C, 2, \alpha}/P_j\quad\cong \quad {\overline R}[\theta^{\pm 1}; \alpha]$ where ${\overline R}$ is the direct sum of $n_j$ copies of $\C$ and $\alpha$ acts on ${\overline R}$ by $\alpha(e_i)=e_{(i+1)\hspace{-0.15cm}\mod\hspace{-0.07cm} n_j}$, where $\{e_i: 1\leq i\leq n_j\}$ is the set of primitive idempotents of ${\overline R}$. Set $n=n_j$, $f=(1-e_1)\theta, a=\theta^{-n+1}$ and $b=\theta^{-1}$. Then: $f^{n-1}=(1-e_1)(1-e_2)\ldots(1-e_{n-1}) \theta^{n-1}=e_n\theta^{n-1}$,   $af^{n-1}=\theta^{-n+1}e_n\theta^{n-1}=e_1$ and  $fb=1-e_1$. Thus $ af^{n-1}+fb=1$ and $f^n=0 $. Therefore, by \cite[Theorem 1.3]{AAR96}, $R[\theta^{\pm 1};\alpha]=M_n(B)$ with $e_{11}=af^{n-1}=e_1$. As $\theta^n$ is central and $e_1\theta ^ke_1=0$, for $0<k<n$, we have $B=e_{11}R[\theta^{\pm 1};\alpha]e_{11}=(e_1R)[\theta^{\pm n}]\simeq\C[\theta^{\pm  n}]$ and (\ref{equationJ}) holds. Now using standard arguments it is easy to complete the proof.
\end{proof}

We can now summarise our (at present incomplete) knowledge about the occurrence of property $(\diamond)$ for the algebras $S_{\C,2,\alpha}$. Recall that, by definition, every element of $\mathrm{Aut}_{\C\mathrm{-alg}}(\C[x,y])$ is either triangular or square.

\begin{theorem}\label{2dim} Let $R = \C[x,y]$, let $\alpha \in A = \mathrm{Aut}_{\C\mathrm{-alg}}(R)$, and let $S_{\C,2,\alpha} = R[\theta;\alpha]$.
\begin{enumerate}
\item Suppose that $\alpha$ is triangular. Then $S_{\C,2,\alpha}$ satisfies $(\diamond)$ if and only if $\alpha$ has finite order if and only if $\alpha$ is conjugate to an element of type (i), with $\lambda, \mu$ roots of unity in $\C$.
\item Suppose that $\alpha$ is square. Then $S_{\C,2,\alpha}$ satisfies $(\diamond)$ if and only if $S_{\C,2,\alpha}$ is not primitive.
\end{enumerate}
\end{theorem}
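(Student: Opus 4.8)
The plan is to dispose of part (a) in one line and to spend the real effort on part (b), which I would obtain by combining Theorem \ref{affine} with the spectral picture of Lemma \ref{square}. For part (a): when $\alpha$ is triangular, Lemma \ref{triangular} already establishes that the conditions ``$S_{\C,2,\alpha}$ satisfies $(\diamond)$'', ``$|\alpha|<\infty$'', and ``$\alpha$ is conjugate to a type (i) automorphism with $\lambda,\mu$ roots of unity'' are mutually equivalent, and part (a) of the theorem asserts exactly this equivalence, so no new work is needed. For part (b), since $R=\C[x,y]$ is affine over the uncountable field $\C$, Theorem \ref{affine} gives that $S_{\C,2,\alpha}$ satisfies $(\diamond)$ if and only if every simple $S_{\C,2,\alpha}$-module is finite dimensional over $\C$. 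The whole of part (b) therefore reduces to proving that \emph{every simple $S_{\C,2,\alpha}$-module is finite dimensional over $\C$ if and only if $S_{\C,2,\alpha}$ is not primitive}. I will use throughout that a square $\alpha$ has infinite order, and that, $S_{\C,2,\alpha}$ being prime, it is primitive precisely when it admits a faithful simple module.

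The forward implication (primitive $\Rightarrow$ $(\diamond)$ fails) is the easy half. If $S_{\C,2,\alpha}$ is primitive it has a faithful simple module $V$. If $V$ were finite dimensional over $\C$, faithfulness would embed $S_{\C,2,\alpha}$ into $\mathrm{End}_{\C}(V)\cong M_m(\C)$ for some finite $m$, making $S_{\C,2,\alpha}$ finite dimensional; this is absurd since it contains $\C[x,y]$. Hence $V$ is infinite dimensional, and Theorem \ref{affine} shows $(\diamond)$ fails. (Alternatively one may invoke Proposition \ref{old2.1}: the $\alpha$-special element generates a countable Ore set $\mathcal{A}$, and as $\C[x,y]$ has Krull dimension $2$, Lemma \ref{kdim1} forces $R\mathcal{A}^{-1}$ not to be a field, whence $(\diamond)$ fails.)

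The reverse implication (not primitive $\Rightarrow$ all simples finite dimensional) is where Lemma \ref{square} is decisive. Non-primitivity means every simple module $V$ has $P:=\mathrm{Ann}_{S_{\C,2,\alpha}}(V)\neq 0$. I split on $\theta$. If $\theta\in P$, then $V$ is a simple module over $S_{\C,2,\alpha}/\theta S_{\C,2,\alpha}\cong\C[x,y]$ and is finite dimensional by the Nullstellensatz. If $\theta\notin P$, then, $\theta$ being a normal element, both $\{v:v\theta=0\}$ and $V\theta$ are submodules; simplicity together with $\theta\notin P$ forces $\theta$ to act bijectively on $V$, so the $S_{\C,2,\alpha}$-action extends to the localisation $T_{\C,2,\alpha}=\C[x,y][\theta^{\pm1};\alpha]$, and $V$ is simple over $T_{\C,2,\alpha}$ because $T_{\C,2,\alpha}$-submodules are in particular $S_{\C,2,\alpha}$-submodules. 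Now $\mathrm{Ann}_{T_{\C,2,\alpha}}(V)\cap S_{\C,2,\alpha}=P\neq 0$, so $\mathrm{Ann}_{T_{\C,2,\alpha}}(V)$ is a nonzero prime of $T_{\C,2,\alpha}$, hence by Lemma \ref{square}(c) it is one of the $P_j$ or one of the $M_{j,\lambda}$. It cannot be $P_j$: by Lemma \ref{square}(e), $T_{\C,2,\alpha}/P_j\cong M_{n_j}(\C[\theta^{\pm n_j}])$ is Azumaya over the Laurent polynomial ring $\C[\theta^{\pm n_j}]$, which, being commutative and not a field, makes $T_{\C,2,\alpha}/P_j$ non-primitive, whereas $V$ is a faithful simple module over $T_{\C,2,\alpha}/\mathrm{Ann}_{T_{\C,2,\alpha}}(V)$. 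Thus $\mathrm{Ann}_{T_{\C,2,\alpha}}(V)=M_{j,\lambda}$ for some $j,\lambda$, and $V$ is a simple module over $T_{\C,2,\alpha}/M_{j,\lambda}\cong M_{n_j}(\C)$, so finite dimensional. In every case $V$ is finite dimensional, and Theorem \ref{affine} delivers $(\diamond)$.

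The genuine difficulty is concentrated in Lemma \ref{square}, on which this argument rests entirely: the delicate facts are that every nonzero prime of $T_{\C,2,\alpha}$ is either co-Artinian or one of the induced height-one primes $P_j$, and that each $P_j$ has an Azumaya quotient with Laurent-polynomial centre. Granting that lemma, the only steps in the present proof that require care are the dichotomy on $\theta$ (using normality of $\theta$ to pass to $T_{\C,2,\alpha}$-modules) and the identification of primitivity of $S_{\C,2,\alpha}$ with the existence of a necessarily infinite-dimensional faithful simple module. I expect no further obstacle, since these are formal once Lemma \ref{square} is in hand.
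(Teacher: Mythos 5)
Your proposal is correct and takes essentially the same route as the paper: part (a) is exactly Lemma \ref{triangular}, and part (b) is obtained by combining Theorem \ref{affine} with the spectral picture of Lemma \ref{square}, just as the paper does. Your module-level argument (the $\theta$-dichotomy, passage to $T_{\C,2,\alpha}$, and exclusion of the $P_j$ as annihilators) is simply an expanded version of the paper's compressed claim that the only primitive ideals are the co-Artinian maximal ones, and your forward direction repackages Theorem \ref{primitivecase}(b), which is the same machinery underlying the implication (a)$\Rightarrow$(b) of Theorem \ref{affine}.
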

\begin{proof}(a) This is part of Lemma \ref{triangular}.

(b) Suppose that $S_{\C,2,\alpha}$ is primitive. Then $S_{\C,2,\alpha}$ does not satisfy $(\diamond)$ by Theorem \ref{primitivecase}(b).

Suppose on the other hand that $S_{\C,2,\alpha}$ is not primitive. Then, from the description of the prime spectrum of $S_{\C,2,\alpha}$ in Lemma \ref{square}, it is clear, bearing in mind Kaplansky's theorem \cite[Theorem I.13.3]{BrownGoodearl}, that the only primitive ideals of $S_{\C,2,\alpha}$ are the co-Artinian maximal ideals $M_{j,\lambda}$. Hence, every simple $S_{\C,2,\alpha}$-module is finite dimensional over $\C$. Therefore, $S_{\C,2,\alpha}$ satisfies $(\diamond)$ by Theorem \ref{affine}.
\end{proof}

Of course, Theorem \ref{2dim}(b) begs the following obvious question. Keep the notation of Theorem \ref{2dim}, and suppose that $\alpha$ is a square automorphism. Is $S_{\C,2,\alpha}$ primitive? The simplest square automorphism is perhaps the map $x \mapsto y,\; y\mapsto x + y^2$. We do not even know the answer in this case.

Using the Leroy-Matczuk criterion for primitivity, Theorem \ref{JurekLeroy}, coupled with Lemma \ref{square}, it is not hard to reduce the above question to one about the geometry of the finite orbits of square automorphisms, a question which may be of independent interest. It seems more natural to frame it in terms of points in the plane $\C^2$, rather than in terms of maximal ideals of $R$:

\begin{proposition}\label{orbitcurve} Keep the notation of Theorem \ref{2dim}, with $\alpha$ square. Denote the countably infinitely many finite $\langle \alpha \rangle$-orbits of points in $\C^2$ by $\mathcal{P}_j$, for $j \in \mathbb{N}$. Then $S_{\C,2,\alpha}$ is primitive if and only if a point $(a_j,b_j)\in \mathcal{P}_j$ can be chosen, for every $j \in \mathbb{N}$, such that $\{(a_j,b_j) : j \in \mathbb{N}\}$ lies on a (not necessarily irreducible) affine curve in $\C^2$.
\end{proposition}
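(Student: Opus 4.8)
The plan is to translate primitivity of $S_{\C,2,\alpha}$ into a property of the polynomial $a$ furnished by the primitivity criterion, and then read that property geometrically. Since $\alpha$ is square it has infinite order (this is already used in the proof of Lemma \ref{square}), so Theorem \ref{JurekLeroy} reduces the whole statement to an internal property of $R=\C[x,y]$: namely, \emph{$S_{\C,2,\alpha}$ is primitive if and only if $R$ is $\alpha$-special}. Everything then hinges on characterising $\alpha$-speciality. Writing $P^{(j)}:=\bigcap_{\ell}Q_{j,\ell}$ for the $\alpha$-prime attached to the finite orbit $\mathcal{P}_j$ as in Lemma \ref{square}(d), my goal is to show $R$ is $\alpha$-special exactly when some nonzero $a$ satisfies $N_n^{\alpha}(a)\in P^{(j)}$ for each $j$ and suitable $n$, and that this is the curve condition.

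First I would identify the nonzero $\alpha$-prime ideals of $R$ as precisely the $P^{(j)}$. Because $\alpha$ is square there is no $\alpha$-invariant curve, equivalently no nonzero proper $\alpha$-stable principal ideal (this is the input from \cite{L} used in Lemma \ref{square}); hence $R$ admits no finite $\langle\alpha\rangle$-orbit of height-one primes, since the product of the generators of such an orbit would generate an $\alpha$-stable principal ideal. Now any nonzero proper $\alpha$-stable ideal $I$ has only finitely many minimal primes, which are $\alpha$-permuted and therefore lie in finite orbits; by the previous remark none can have height one, and $(0)$ is excluded, so they are all maximal. Consequently $\sqrt{I}=\bigcap_{j\in F}P^{(j)}$ for a finite set $F$, which pins down the nonzero $\alpha$-primes as the $P^{(j)}$.

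Next I would reduce condition (ii) of Definition \ref{special} to the ideals $P^{(j)}$ alone (condition (i) being automatic since $R$ is a domain, so only $a\neq0$ is needed). Suppose $N_{e_j}^{\alpha}(a)\in P^{(j)}$ for every $j$. Given a nonzero proper $\alpha$-stable $I$ with $\sqrt{I}=\bigcap_{j\in F}P^{(j)}$, put $N:=\max_{j\in F}e_j$; since $N\ge e_j$ gives $N_{e_j}^{\alpha}(a)\mid N_N^{\alpha}(a)$, the element $b:=N_N^{\alpha}(a)$ lies in $\bigcap_{j\in F}P^{(j)}=\sqrt{I}$. The block factorisation $N_{mN}^{\alpha}(a)=\prod_{i=0}^{m-1}\alpha^{iN}(b)$, together with the fact that $\sqrt{I}$ is $\alpha$-stable and finitely generated, shows $N_{mN}^{\alpha}(a)\in(\sqrt{I})^{m}\subseteq I$ for $m\gg0$, so $a$ is $\alpha$-special. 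The converse is immediate on taking $I=P^{(j)}$. Thus $R$ is $\alpha$-special if and only if some nonzero $a$ hits every $P^{(j)}$.

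The crux is the geometric reading of $N_n^{\alpha}(a)\in P^{(j)}$. Writing the $\langle\alpha\rangle$-action on points of $\C^{2}$ as $\psi$, one has $N_n^{\alpha}(a)(p)=\prod_{i=0}^{n-1}a(\psi^{i}(p))$, so $N_n^{\alpha}(a)$ vanishes at $p$ iff some $\psi^{i}(p)$ lies on the zero set $Z(a)$. For $p\in\mathcal{P}_j$ and $n$ at least the orbit size, the points $\psi^{0}(p),\dots,\psi^{n-1}(p)$ sweep out all of $\mathcal{P}_j$; hence $N_n^{\alpha}(a)\in P^{(j)}$ precisely when $Z(a)\cap\mathcal{P}_j\neq\emptyset$, independently of $p$. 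Therefore $R$ is $\alpha$-special iff there is a nonzero $a$ whose zero set meets every $\mathcal{P}_j$; the infinitude of the orbits forces $a$ to be non-constant, so $Z(a)$ is a (possibly reducible) affine curve, and picking $(a_j,b_j)\in Z(a)\cap\mathcal{P}_j$ yields a transversal on one curve, while conversely a transversal lying on $Z(g)$ supplies $a=g$. I expect the main obstacle to be the bookkeeping of the reduction to the $P^{(j)}$ — in particular the argument that nonzero $\alpha$-stable ideals have only maximal minimal primes, which rests on the absence of $\alpha$-invariant curves — and the careful statement of the orbit-sweeping computation; once these are secured the equivalence with Theorem \ref{JurekLeroy} closes the proof.
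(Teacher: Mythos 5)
Your proof is correct and follows exactly the route the paper intends: the paper leaves this proposition to the reader, indicating only that it follows from the Leroy--Matczuk criterion (Theorem \ref{JurekLeroy}) coupled with Lemma \ref{square} and Jordan's equivalent description of $\alpha$-special rings. The details you supply — that a square $\alpha$ has infinite order, the identification of the nonzero $\alpha$-stable (hence $\alpha$-prime) ideals via Lane's theorem on the absence of $\alpha$-stable principal ideals, the reduction of $\alpha$-speciality to the condition $N_n^{\alpha}(a)\in P^{(j)}$ for all $j$, and the geometric translation of that condition into $Z(a)\cap\mathcal{P}_j\neq\emptyset$ — are precisely the omitted "straightforward" steps, and all of them are sound.
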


We leave the straightforward proof to the reader and refer to \cite[Lemma 2.6(v)]{Jordan} for an  equivalent description of $\alpha$-special rings.

\subsection{Subalgebras of group algebras over absolute fields}\label{groupring}

This family of examples shows that neither of the conditions (\emph{c}) $S$ \emph{satisfies a polynomial identity} nor (\emph{d}) $|\alpha| < \infty$ is implied by conditions (\emph{a}) or (\emph{b}) of Theorem \ref{affine}. Note that (\emph{c}) and (\emph{d}) are equivalent when $R$ is semiprime, by \cite{PascaudValette} or \cite{DamianoShapiro}.

Let $A$ be a free abelian group of finite rank $t$, $(t > 1)$. Write $A$ multiplicatively with free generating set $X_1, \ldots , X_t$. Let $M \in GL_t(\Z)$, and define an automorphism $\alpha$ of $A$ using $M$; that is, for $\mathbf{X}^{\mathbf{n}} = X_1^{n_1}\ldots X_t^{n_t} \in A$, with $\mathbf{n} = (n_1, \ldots , n_t) \in \Z^t$,
$$ \alpha (\mathbf{X}^{\mathbf{n}}) \quad := \quad \mathbf{X}^{M\mathbf{n}},$$
where $\mathbf{n}$ is written as a column vector on the right hand side.

Let $\langle \theta^{\pm 1} \rangle $ be an infinite cyclic group, and form the semidirect product
$$ G\quad := \quad A \rtimes_{\alpha} \langle \theta^{\pm 1} \rangle ,$$
so that $G$ is a torsion-free polycyclic group of Hirsch length $t+1$. Now let $p$ be a prime integer, and let $k$ be a subfield of the algebraic closure of the field of $p$ elements. By a celebrated result of Roseblade, \cite{Roseblade1}, every simple module for the group algebra $kG$ has finite dimension as a $k$-vector space. (Roseblade's result applies to all polycyclic-by-finite groups; the case where the group is nilpotent-by-finite was proved by Philip Hall in 1959 \cite{Hall}.)

Let $S$ be the subalgebra $kA[\theta;\alpha]$ of $kG$. We claim that $S$ inherits from $kG$ the property that all its simple modules are finite dimensional. To see this, let $W$ be a simple $S$-module. Since the powers of $\theta$ form an Ore set in $S$, $W$ is either $\theta$-torsion or $\theta$-torsion free. In the first case, $W\theta = 0$, so that $W$ is a simple module over $S/\theta S \cong kA$. Therefore $\mathrm{dim}_k(W) < \infty$ by Hilbert's nullstellensatz. In the second case, $W\theta = W$, so $W$ carries a structure as $S\langle \theta^{-1}\rangle$-module, that is as $kG$-module. As such, it is necessarily simple, and hence has finite $k$-dimension by Roseblade's theorem \cite{Roseblade1}.

Finally, choosing $M$ to be a matrix of infinite order, we obtain an algebra $S$ which is a skew polynomial $k$-algebra over a Laurent polynomial coefficient algebra, which has property $(\diamond)$ but for which $|\alpha| = \infty$.

\subsection{A subalgebra of a group algebra in characteristic 0}\label{Jordan}

The following example is considered in \cite[7.10-7.14]{Jordan}.

Let $S=R[\theta; \alpha]$, where $R=\C[x,x^{-1},y,y^{-1}]$ and $\alpha\in \mathrm{Aut}_{\C-\mathrm{alg}}(R)$ is defined by $\alpha(y)=x$ and $\alpha(x)=yx^{-1}$. In \cite{Jordan}, Jordan considers both the algebras $S := R[\theta;\alpha]$ and $T := R[\theta^{\pm 1}; \alpha]$. He proves in \cite[Proposition 7.13]{Jordan} that $T$ is primitive. In the same proposition it is claimed that $R$ is not $\alpha$-special, so that, by the Leroy-Matczuk theorem, Theorem 4.2, $S$ is therefore not primitive. As is shown in \cite[Propositions 7.11, 7.12]{Jordan}, every non-zero primitive ideal of $S$ is co-Artinian. Hence, if $S$ itself is not primitive, then every simple $S$-module is finite dimensional, and $S$ satisfies $(\diamond)$ by Theorem \ref{affine}. However, there appears to be a gap in the proof that $R$ is not $\alpha$-special in \cite[Proposition 7.13]{Jordan}, so that the status of $S$ as regards the first two bullet points listed at the start of  $\S  7$ is unknown. We therefore ask: \emph{Is $S$ primitive?}

Observe that, in the light of Theorem \ref{affine} and the above discussion, $S$ satisfies $(\diamond)$ if and only if it is \emph{not} primitive.

\section{Questions}\label{query}

For the convenience of the reader, we gather here a number of open questions arising from this work, some of them previously raised earlier in the text, some appearing here for the first time. As usual, $k$ is a field, and $S$ will denote the skew polynomial ring $R[\theta; \alpha]$, where $R$ is commutative Noetherian and $\alpha$ is an automorphism. Further hypotheses will be added as needed.

First, a question which seeks to remove the gap between the necessary and sufficient conditions in Theorem \ref{first} (= \ref{primitivecase}).
\begin{question}\label{qn1} Suppose that $S$ is primitive and satisfies $(\diamond)$. Must $R$ be a finite direct sum of fields?
\end{question}
We should note that the remaining case of Theorem \ref{primitivecase} is the one where $\alpha$ is  of infinite order, and $R$ is  an $\alpha$-special Noetherian domain  of Krull dimension $1$ with countable spectrum. Moreover  $R{\mathcal A}^{-1}$ is a field, where ${\mathcal A}$, as before, is the smallest Ore set of $S$ containing an $\alpha$-special element. When ${\mathcal A}$ is finitely generated as a multiplicatively closed set (it is then possible to replace ${\mathcal A}$ by   the set of powers of an $\alpha$-special element) $R$ is a $G$-domain (see \cite[Theorem 19]{KaplanskyC}). Thus, by \cite[Theorem 156]{KaplanskyC}, $R$ is semilocal. The case when $R=K[[X]]$, the ring of formal power series and $\alpha$ is the  automorphism given by $\alpha(X)=qX$, for $q\in K$ not a root of unity, is a key example to consider.

A second point where necessary and sufficient conditions fail to match is in the affine case, where the proof in one direction of Theorem \ref{affine1} (= \ref{affine}) needs the base field to be uncountable. So we ask:

\begin{question}\label{qn2} Suppose that $R$ is an affine $k$-algebra and $S$ satisfies $(\diamond)$. Are all simple $S$-modules finite dimensional over $k$?
\end{question}

It is worth noting in connection with Question \ref{qn2} that $(\diamond)$ is in general not well-behaved with regard to change of base field. Consider, for example, any polycyclic-by-finite group $G$ which is not abelian-by-finite; for instance, the Heisenberg group
$$ G \quad = \quad \langle x,y,z : [x,y] = z, z \textrm{ central} \rangle. $$
Choose a prime $p$, and let $k$ be the field of $p$ elements. Then $kG$ satisfies $(\diamond)$, as already noted in $\S$\ref{history}, thanks to Roseblade's theorems \cite{Roseblade}, \cite{Roseblade1} (or by \cite{Hall} if we take $G$ to be nilpotent as above. Now let $F$ be any field of charactersitic $p$ which is \emph{not} algebraic over $k$. Then $FG = F \otimes_k kG$ does not satisfy $(\diamond)$ by \cite{Musson82}.

Haunting the results of $\S$6 and the examples of $\S$7 is the fundamental, if vague, question:

\begin{question}\label{qn3} Suppose that $R$ is $k$-affine and $\alpha \in \mathrm{Aut}_{k-\mathrm{alg}}(R)$. Are there ``reasonable'' necessary and sufficient  hypotheses on $R$ and $\alpha$ which determine when all the simple $S$-modules are finite-dimensional?
\end{question}

The examples stemming from group examples considered in $\S$\ref{groupring} show that the situation regarding Question \ref{qn3}  in positive characteristic is undoubtedly rather delicate. By contrast, in characteristic 0, there is, so far as we are aware, no example known at present where $S$ is $k$-affine, has all its simple modules finite dimensional, but does \emph{not} satisfy a polynomial identity. Note however that this includes specific examples studied in $\S\S$\ref{2poly} and \ref{Jordan} where the question of primitivity remains open. We therefore end by recording those concrete examples, which should be regarded as test cases for Question \ref{qn3}, and whose resolution seems to be potentially interesting from the perspectives of dynamical systems and of algebraic geometry.

The first of these two test cases repeats the question asked after  Theorem \ref{2dim}.

\begin{question}\label{qn4} Let $S = S_{\C,2,\alpha} = \C[x,y][\theta; \alpha]$, where $\alpha$ is a square automorphism. For example, $\alpha$ could be the map sending $x$ to $y$ and $y$ to $x + y^2$. Is $S$ primitive? Equivalently, does $S$ have an infinite dimensional simple module?
\end{question}

The final test case  seeks a resolution of the gap left in the proof of \cite[Proposition 7.13]{Jordan}.
\begin{question}\label{qn5} Let $S=\C[x,x^{-1},y,y^{-1}][\theta; \alpha]$, where $\alpha(y)=x$ and $\alpha(x)=yx^{-1}$. Is $S$ primitive? Equivalently, does $S$ have an infinite dimensional simple module?
\end{question}

\section*{Acknowledgements}
Some of this work was done while the second author visited the University of Glasgow and the University of Warsaw, supported by grant SFRH/BSAB/1286/2012. She would like to thank the two Universities for their hospitality.  The second author was also partially supported by CMUP (UID/MAT/00144/2013), which is funded by FCT (Portugal) with national (MEC) and European structural funds (FEDER), under the partnership agreement PT2020 and by the Warsaw Center of Mathematics and Computer Science.

The first and third authors would like to thank the University of Porto and its staff for hospitality and good working conditions. The third author acknowledges the support of CMUP and of Polish National Center of Science Grant No. DEC-2011/03/B/ST1/04893.

We are thankful to David Jordan, Christian Lomp, Ant\'onio Machiavelo and Carlos Rito for very helpful discussions regarding the behaviour of orbits in $\C^2$ under the action of  automorphisms.


\bigskip

  \textsc{School of Mathematics and Statistics, University of Glasgow, Glasgow G12 8SQ, Scotland}\par\nopagebreak
  \textit{E-mail address}: \texttt{Ken.Brown@glasgow.ac.uk}

  \medskip
   \textsc{Departamento de Matem\'atica, Faculdade de Ci\^encias, Universidade do Porto, 4169-007 Porto, Portugal}\par\nopagebreak
  \textit{E-mail address}: \texttt{pbcarval@fc.up.pt}

  \medskip
  
   \textsc{Institute of Mathematics, Warsaw University,  Banacha 2, 02-097 Warsaw, Poland}
  \par\nopagebreak
  \textit{E-mail address}:   \texttt{jmatczuk@mimuw.edu.pl}

  \medskip

\end{document}